\theoremstyle{plain}
\newtheorem*{thm}{Theorem}
\newtheorem{theorem}{Theorem}[section]
\newtheorem{lemma}[theorem]{Lemma}
\newtheorem{proposition}[theorem]{Proposition}
\newtheorem{corollary}[theorem]{Corollary}
\theoremstyle{definition}
\newtheorem{remark}[theorem]{Remark}
\newtheorem{remarks}[theorem]{Remarks}
\newtheorem{examples}[theorem]{Examples}
\newcommand\bD{{\mathbb D}}
\newcommand\bF{{\mathbb F}}
\newcommand\bG{{\mathbb G}}
\newcommand\bQ{{\mathbb Q}}
\newcommand\bZ{{\mathbb Z}}
\newcommand\cA{{\mathcal A}}
\newcommand\cC{{\mathcal C}}
\newcommand\cD{{\mathcal D}}
\newcommand\cF{{\mathcal F}}
\newcommand\cI{{\mathcal I}}
\newcommand\cL{{\mathcal L}}
\newcommand\cM{{\mathcal M}}
\newcommand\cO{{\mathcal O}}
\newcommand\cP{{\mathcal P}}
\newcommand\cS{{\mathcal S}}
\newcommand\cT{{\mathcal T}}
\newcommand\cU{{\mathcal U}}
\newcommand\cV{{\mathcal V}}
\newcommand\uf{\underline{f}}
\newcommand\ug{\underline{g}}
\newcommand\uh{\underline{h}}
\newcommand\uu{\underline{u}}
\newcommand\uv{\underline{v}}
\newcommand\uw{\underline{w}}
\newcommand\ugamma{\underline{\gamma}}
\newcommand\uvarphi{\underline{\varphi}}
\newcommand\upsi{\underline{\psi}}
\newcommand\uA{\underline{A}}
\newcommand\uD{\underline{D}}
\newcommand\uE{\underline{E}}
\newcommand\uFC{\underline{FC}}
\newcommand\uR{\underline{R}}
\newcommand\uT{\underline{T}}
\newcommand\uU{\underline{U}}
\newcommand\uX{\underline{X}}
\newcommand\uotimes{\underline{\otimes}}
\newcommand\ucA{\underline{\mathcal A}}
\newcommand\ucC{\underline{\mathcal C}}
\newcommand\ucD{\underline{\mathcal D}}
\newcommand\ucE{\underline{\mathcal E}}
\newcommand\ucL{\underline{\mathcal L}}
\newcommand\ucS{\underline{\mathcal S}}
\newcommand\ucT{\underline{\mathcal T}}
\newcommand\ucU{\underline{\mathcal U}}
\newcommand\ucV{\underline{\mathcal V}}
\newcommand\wA{\widehat{A}}
\newcommand\wB{\widehat{B}}
\newcommand\car{{\rm char}}
\newcommand\diag{{\rm diag}}
\newcommand\hd{{\rm hd}}
\newcommand\id{{\rm id}}
\renewcommand\ker{{\rm ker}}
\newcommand\red{{\rm red}}
\newcommand\R{{\rm R}}
\newcommand\Coker{{\rm Coker}}
\newcommand\End{{\rm End}}
\newcommand\Ext{{\rm Ext}}
\newcommand\GL{{\rm GL}}
\newcommand\Hom{{\rm Hom}}
\renewcommand\Im{{\rm Im}}
\newcommand\Ker{{\rm Ker}}
\newcommand\Lie{{\rm Lie}}
\newcommand\Rep{{\rm Rep}}
\newcommand\Spec{{\rm Spec}}
\title{Commutative algebraic groups up to isogeny}
\author{Michel Brion}
\date{}
\begin{document}

\maketitle

\begin{abstract}
Consider the abelian category $\cC_k$ of commutative group schemes of finite
type over a field $k$. By results of Serre and Oort, $\cC_k$ has homological 
dimension $1$ (resp.~$2$) if $k$ is algebraically closed of characteristic 
$0$ (resp.~positive). In this article, we explore the abelian category 
of commutative algebraic groups up to isogeny, defined as the quotient 
of $\cC_k$ by the full subcategory $\cF_k$ of finite $k$-group schemes. 
We show that $\cC_k/\cF_k$ has homological dimension $1$, and we determine 
its projective or injective objects. We also obtain structure results 
for $\cC_k/\cF_k$, which take a simpler form in positive characteristics. 
\end{abstract}

\tableofcontents

\section{Introduction}
\label{sec:i}

There has been much recent progress on the structure of algebraic
groups over an arbitrary field; in particular, on  the classification of 
pseudo-reductive groups (see \cite{CGP, CP}). Yet commutative
algebraic groups over an imperfect field remain somewhat mysterious,
e.g., extensions with unipotent quotients are largely unknown;
see \cite{Totaro} for interesting results, examples, and questions.

In this article, we explore the category of commutative 
algebraic groups up to isogeny, in which the problems raised
by imperfect fields become tractable; this yields rather simple and 
uniform structure results.
 
More specifically, denote by $\cC_k$ the category with objects 
the group schemes of finite type over the ground field $k$, 
and with morphisms, the homomorphisms of $k$-group schemes 
(all group schemes under consideration will be assumed commutative). 
By a result of Grothendieck (see \cite[VIA, Thm.~5.4.2]{SGA3}), 
$\cC_k$ is an abelian category. We define the category of 
`algebraic groups up to isogeny' as the quotient category of 
$\cC_k$ by the Serre subcategory of finite group schemes; then 
$\cC_k/\cF_k$ is obtained from $\cC_k$ by inverting all isogenies, 
i.e., all morphisms with finite kernel and cokernel.

It will be easier to deal with the full subcategory $\ucC_k$ of 
$\cC_k/\cF_k$ with objects the smooth connected algebraic groups, 
since these categories turn out to be equivalent, and morphisms 
in $\ucC_k$ admit a simpler description.

As a motivation for considering the `isogeny category' $\ucC_k$,
note that some natural constructions involving algebraic groups 
are only exact up to isogeny; for example, the formations of the 
maximal torus or of the largest abelian variety quotient, both of 
which are not exact in $\cC_k$. Also, some structure theorems for 
algebraic groups take on a simpler form when reformulated up to isogeny. 
A classical example is the Poincar\'e complete reducibility theorem, 
which is equivalent to the semi-simplicity of the isogeny category 
of abelian varieties, i.e., the full subcategory $\ucA_k$ of $\ucC_k$ 
with objects abelian varieties. Likewise, the isogeny category of tori, 
$\ucT_k$, is semi-simple.

We gather our main results in the following:

\begin{thm}\label{thm:all}

\begin{enumerate}

\item[{\rm (i)}] The category $\ucC_k$ is artinian and noetherian.
Its simple objects are exactly the additive group $\bG_{a,k}$, 
the simple tori, and the simple abelian varieties.

\item[{\rm (ii)}] The product functor
$\ucT_k \times \ucU_k \to \ucL_k$ yields an equivalence of categories,
where $\ucU_k$ (resp.~$\ucL_k$) denotes the isogeny category of
unipotent (resp.~linear) algebraic groups. 

\item[{\rm (iii)}] If $\car(k) > 0$, then the product functor
$\ucS_k \times \ucU_k \to \ucC_k$ yields an equivalence of categories,
where $\ucS_k$ denotes the isogeny category of semi-abelian varieties.
If in addition $k$ is locally finite, then the product functor
$\ucT_k \times \ucA_k \to \ucS_k$ yields an equivalence of categories
as well.

\item[{\rm (iv)}] The base change under any purely inseparable field 
extension $k'$ of $k$ yields an equivalence of categories 
$\ucC_k \to \ucC_{k'}$.

\item[{\rm (v)}]
The homological dimension of $\ucC_k$ is $1$.

\end{enumerate}

\end{thm}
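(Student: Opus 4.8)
The plan is to prove the equivalent assertion that $\Ext^2_{\ucC_k}(G,H)=0$ for all objects $G,H$, since a Yoneda-$\Ext$ computation shows that the vanishing of $\Ext^2$ everywhere forces $\Ext^n=0$ for all $n\ge 2$, i.e. $\hd \ucC_k \le 1$. By part~(i) the category $\ucC_k$ is artinian and noetherian, so every object has finite length; hence a dévissage using the long exact $\Ext$-sequences in each variable reduces the vanishing of $\Ext^2$ to the case where both $G$ and $H$ are simple. By~(i) again, the simple objects are exactly $\bG_{a,k}$, the simple tori, and the simple abelian varieties, so only finitely many ``types'' of pairs remain. The reverse inequality $\hd \ucC_k \ge 1$ will be handled separately by exhibiting a single non-split extension.

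Next I would exploit the product decompositions of parts~(ii) and~(iii) to separate the unipotent contribution from the rest. Since $\Ext$-groups in a product of abelian categories carry no cross terms, (ii) shows there are no extensions between tori and unipotent groups, and in characteristic $p$ the equivalence $\ucC_k \simeq \ucS_k \times \ucU_k$ of~(iii) shows there are likewise no extensions between unipotent groups and semi-abelian varieties. Thus in positive characteristic it suffices to bound the homological dimensions of $\ucU_k$ and of $\ucS_k$ \emph{separately}, where the only simple unipotent object is $\bG_{a,k}$.

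For all pairs not involving $\bG_{a,k}$ (that is, every pair in characteristic $0$, and the tori/abelian-variety/semi-abelian pairs in characteristic $p$), multiplication by any nonzero integer $n$ is an isogeny, hence becomes invertible in $\ucC_k$. I therefore expect $\Ext^\bullet_{\ucC_k}(G,H)$ between such objects to be the localization of $\Ext^\bullet_{\cC_k}(G,H)$ obtained by inverting all integers, i.e. $\Ext^\bullet_{\cC_k}(G,H)\otimes_{\bZ}\bQ$. In characteristic $0$, Serre's theorem gives $\hd \cC_k =1$ and hence $\Ext^2_{\cC_k}=0$, so these terms vanish outright; in characteristic $p$, Oort's theorem only gives $\hd \cC_k=2$, but the relevant $\Ext^2_{\cC_k}$-groups are torsion (their contributions come from $p$-power torsion and infinitesimal group schemes), so their rationalization still vanishes. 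To invoke Serre and Oort, which are stated over algebraically closed fields, I would pass between $k$ and its algebraic closure, using~(iv) to handle the purely inseparable part and Galois descent for the separable part.

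The genuine obstacle is the vanishing of $\Ext^2_{\ucU_k}(\bG_{a,k},\bG_{a,k})$ in characteristic $p$: here multiplication by $p$ is the zero map rather than an isogeny, so the localization argument above breaks down, and the isogenies that must be inverted are instead the Frobenius-type maps. My approach would be to first reduce to $k$ perfect by applying~(iv) to the perfect closure (a filtered union of finite purely inseparable extensions), and then to use the Dieudonné-module description of $\ucU_k$ as the category of finite-length modules over the ring obtained from the Dieudonné ring by inverting Frobenius, essentially a skew Laurent ring $k[F,F^{-1}]$. This ring is a non-commutative principal ideal domain, hence hereditary, which yields $\Ext^2=0$. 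Establishing this equivalence and its hereditary property --- equivalently, checking directly that the Frobenius isogenies annihilate the ($p$-torsion) classes of $\Ext^2_{\cC_k}(\bG_{a,k},\bG_{a,k})$ --- is the technical heart of the argument. Finally, for $\hd \ucC_k \ge 1$ it is enough to produce one nonzero $\Ext^1$: a nontrivial semi-abelian variety, or the length-two Witt group $W_2$ viewed as a self-extension of $\bG_{a,k}$ that does not split up to isogeny, or (in characteristic $0$) an extension of an abelian variety by $\bG_{a,k}$.
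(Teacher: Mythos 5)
Your proposal addresses only part (v), taking (i)--(iv) as inputs; that is legitimate (the paper's proofs of (i)--(iv) do not use (v)), but it leaves four fifths of the theorem unproved. For (v) your architecture --- reduce to $\Ext^2$ between simple objects, isolate $\bG_a$, use $\bQ$-linearity for divisible pairs, and exhibit one non-split extension for the lower bound --- coincides with the paper's, but your treatment of the divisible pairs is genuinely different. The paper proves that tori are projective and abelian varieties injective in $\ucC$ (Propositions \ref{prop:proj} and \ref{prop:inj}), which kills most $\Ext^{\geq 1}$ between simples at once, and then disposes of $\Ext^2_{\ucC}(A,T)$ and $\Ext^2_{\ucC}(A,\bG_a)$ by a direct Yoneda-product argument (split $\eta=\eta_1\cup\eta_2$, use semisimplicity of $\ucA$ to arrange that the linking object is linear, and apply Lemma \ref{lem:list}(iv)); no theorem of Serre, Oort or Milne enters. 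Your rationalization route can be made to work and is arguably more conceptual, but it needs the surjectivity of $\bQ\otimes_{\bZ}\Ext^2_{\cC}\to\Ext^2_{\ucC}$ up to isogeny --- this is exactly Proposition \ref{prop:exa}, a real lemma, not something to be ``expected'' --- and it trades the paper's elementary arguments for Milne's torsion theorem. (Also note that the decomposition of $\ucL$ in (ii) only controls $\Ext$-groups computed inside $\ucL$; a $2$-extension of a torus by a unipotent group in $\ucC$ may pass through non-linear groups, so cross-term vanishing for $\Ext^2_{\ucC}$ does not follow from (ii) alone. It does follow from (iii) in characteristic $p$, and from your localization argument in characteristic $0$.)

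Two steps are genuinely broken as written. First, the descent: there is no usable ``Galois descent'' for Yoneda $\Ext^2$ in $\cC_k$, and Serre's vanishing does not descend --- by Milne's theorem $\hd(\cC_k)$ can be arbitrarily large for $k$ perfect, e.g.\ a number field, so $\Ext^2_{\cC_{\bar k}}=0$ does not give $\Ext^2_{\cC_k}=0$ even in characteristic $0$. What you actually need is only that $\Ext^2_{\cC_k}(G,H)$ is torsion, and the clean source is Milne's result (cited in Remark \ref{rem:van}) that $\Ext^n_{\cC_k}$ is torsion for $n\geq 2$ over any \emph{perfect} field: every characteristic-$0$ field is perfect, and in characteristic $p$ part (iv) lets you replace $k$ by its perfect closure before computing in $\ucC$; no separable descent is ever required. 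Second, the case $\Ext^2_{\ucU}(\bG_a,\bG_a)$ in characteristic $p$: the isogeny category $\ucU_k$ ($k$ perfect) is anti-equivalent to finitely generated modules over a localization of the full Dieudonn\'e ring $W(k)[F,V]$ that are killed by a power of $V$ (Remark \ref{rem:fe}(ii)), not to modules over a skew Laurent ring $k[F,F^{-1}]$; the latter is a $k$-algebra in which $p=0$, whereas $W_2$ is a nonzero object of $\ucU$ not killed by $p$ and not isogenous to $\bG_a^2$ (Example \ref{ex:nonsplit}(ii)). Proving hereditariness of the correct module category is not easier than the paper's argument, which lifts a $2$-extension in $\ucU$ to an honest exact sequence in $\cU$ via Proposition \ref{prop:exa} and quotes the classical vanishing $\Ext^2_{\cU}(\bG_a,\bG_a)=0$ from Demazure--Gabriel; there are no torsion classes left to annihilate. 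Your lower-bound examples ($W_2$, vector extensions of abelian varieties in characteristic $0$) are the paper's, though the semi-abelian option fails over locally finite fields, where $\ucS\simeq\ucT\times\ucA$.
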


We also describe the projective objects of the category $\ucC_k$ 
(Theorem \ref{thm:proj}) and its injective objects (Theorem 
\ref{thm:inj}). Moreover, in characteristic $0$, we obtain a structure 
result for that category (Proposition \ref{prop:prodzero}), 
which turns out to be more technical than in positive characteristics.

Let us now compare the above statements with known results on $\cC_k$ 
and its full subcategories $\cA_k$ (resp.~$\cT_k$, $\cU_k$, $\cL_k$, 
$\cS_k$) of abelian varieties (resp.~tori, unipotent groups, linear 
groups, semi-abelian varieties). 

About (i) (an easy result, mentioned by Serre in \cite{Serre-II}):
$\cC_k$ is artinian and not noetherian. Also, every 
algebraic group is an iterated extension of `elementary' groups; 
these are the simple objects of $\ucC_k$ and the simple finite
group schemes.

About (ii): denoting by $\cM_k$ the full subcategory of $\cC_k$
with objects the algebraic groups of multiplicative type, 
the product functor $\cM_k \times \cU_k \to \cL_k$ 
yields an equivalence of categories if $k$ is perfect. 
But over an imperfect field, there exist non-zero extensions of 
unipotent groups by tori, and these are only partially understood
(see \cite[\S 9]{Totaro} again; our study of $\ucC_k$ brings no
new insight in these issues).

About (iii): the first assertion follows from recent structure results 
for algebraic groups (see \cite[\S 5]{Brion-II}), together with a 
lifting property for extensions of such groups with finite quotients
(see \cite{Brion} and \cite{Lucchini}). The second assertion is a direct 
consequence of the Weil-Barsotti isomorphism (see e.g. 
\cite[\S III.18]{Oort66}).

About (iv): this is a weak version of a result of Chow on abelian 
varieties, which asserts (in categorical language) that base change 
yields a fully faithful functor $\cA_k \to \cA_{k'}$ for any primary field 
extension $k'$ of $k$ (see \cite{Chow}, and \cite[\S 3]{Conrad} for 
a modern proof).

About (v), the main result of this article: recall that the homological 
dimension of an abelian category $\cD$ is the smallest integer, 
$\hd(\cD)$, such that $\Ext^n_{\cD}(A,B) = 0$ for all objects $A,B$ of 
$\cD$ and all $n > \hd(\cD)$; these $\Ext$ groups are defined as 
equivalence classes of Yoneda extensions. In particular, $\hd(\cD) = 0$ 
if and only if $\cD$ is semi-simple.

It follows from work of Serre (see \cite[10.1 Thm.~1]{Serre-II}
together with \cite[Thm.~3.5]{Oort64}) that $\hd(\cC_k) = 1$ 
if $k$ is algebraically closed of characteristic $0$.
Also, by a result of Oort (see \cite[Thm.~14.1]{Oort66}), $\hd(\cC_k) =2$
if $k$ is algebraically closed of positive characteristic. Building on
these results, Milne determined $\hd(\cC_k)$ when $k$ is perfect (see 
\cite[Thm.~1]{Milne70}); then the homological dimension can be 
arbitrarily large. In the approach of Serre and Oort, 
the desired vanishing of higher extension groups is obtained by
constructing projective resolutions of elementary groups, in the 
category of pro-algebraic groups. The latter category contains $\cC_k$
as a full subcategory, and has enough projectives.

In contrast, to show that $\hd(\ucC_k) = 1$ over an arbitrary field $k$,
we do not need to go to a larger category. We rather observe that
tori are projective objects in $\ucC_k$, and abelian varieties are
injective objects there. This yields the vanishing of all but three
extension groups between simple objects of $\ucC_k$; two of the three
remaining cases are handled directly, and the third one reduces to
the known vanishing of $\Ext^2_{\cC_k}(\bG_{a,k},\bG_{a,k})$
when $k$ is perfect.

When $k$ has characteristic $0$, the fact that $\hd(\cC_k) \leq 1$ 
follows from a similar result for the category of Laumon $1$-motives 
up to isogeny (obtained by Mazzari in \cite[Thm.~2.5]{Mazzari}).
Indeed, $\cC_k$ is equivalent to a Serre subcategory of 
the latter category; moreover, if an abelian category has homological
dimension at most $1$, then the same holds for any Serre subcategory
(as follows e.g. from \cite[\S 3]{Oort64}). Likewise, the fact that 
the category of Deligne $1$-motives up to isogeny has homological dimension 
at most $1$ (due to Orgogozo, see \cite[Prop.~3.2.4]{Orgogozo}) implies the
corresponding assertion for the isogeny category of semi-abelian 
varieties over an arbitrary field.

Abelian categories of homological dimension $1$ are called hereditary.
The most studied hereditary categories consist either of 
finite-dimensional modules over a finite-dimensional hereditary algebra, 
or of coherent sheaves on a weighted projective line (see e.g. 
\cite{Happel}). Such categories are $k$-linear and $\Hom$-finite, 
i.e., all groups of morphisms are vector spaces of finite dimension 
over the ground field $k$. But this seldom holds for the above isogeny 
categories. More specifically, $\ucA_k$ and $\ucT_k$ are both 
$\bQ$-linear and $\Hom$-finite, but not $\ucC_k$ unless $k$ is 
a number field. When $k$ has characteristic $0$, we may view $\ucC_k$ 
as a mixture of $k$-linear and $\bQ$-linear categories. 
This is already displayed by the full subcategory $\ucV_k$ 
with objects the vector extensions of abelian varieties: as shown in 
\S \ref{subsec:veav}, $\ucV_k$ has enough projectives, and these are 
either the unipotent groups ($k$-linear objects), or the vector 
extensions of simple abelian varieties ($\bQ$-linear objects). 

In positive characteristic, one may also consider the quotient
category of $\cC_k$ by the Serre subcategory $\cI_k$ of infinitesimal
group schemes. This yields the abelian category of 
`algebraic groups up to purely inseparable isogeny', which is 
equivalent to that introduced by Serre in \cite{Serre-II}; as a 
consequence, it has homological dimension $1$ if $k$ is algebraically
closed. For any arbitrary field $k$, the category $\cC_k/\cI_k$ is 
again invariant under purely inseparable field extensions; 
its homological properties may be worth investigating.

\medskip

\noindent
{\bf Notation and conventions}. 
We will use the book \cite{DG} as a general reference, especially
for affine algebraic groups, and the expository text \cite{Brion-II} 
for some further results.

Throughout this text, we fix the ground field $k$ and an algebraic 
closure $\bar{k}$; the characteristic of $k$ is denoted by $\car(k)$.
We denote by $k_s$ the separable closure of $k$ in $\bar{k}$, 
and by $\Gamma_k$ the Galois group of $k_s$ over $k$. We say that $k$
is \emph{locally finite}, if it is algebraic over $\bF_p$ for some
prime $p$.

By an \emph{algebraic $k$-group}, we mean a commutative group
scheme $G$ of finite type over $k$; we denote by $G^0$
the neutral component of $G$. The group law of $G$ will be 
denoted additively: $(x,y) \mapsto x + y$. 

By a \emph{$k$-subgroup} of $G$, we mean a closed $k$-subgroup 
scheme. \emph{Morphisms} are understood to be homomorphisms of 
$k$-group schemes. The (scheme-theoretic) image of a morphism
$f: G \to H$ will be denoted by $\Im(f)$ or $f(G)$, and the
(scheme-theoretic) pull-back of a $k$-subgroup $H' \subset H$,
by $G \times_H H'$ or $f^{-1}(H')$.

Recall that an \emph{abelian variety} over $k$ is 
a smooth connected proper algebraic $k$-group. Also, 
recall that a $k$-group scheme $G$ is an affine algebraic $k$-group
if and only if $G$ is isomorphic to a $k$-subgroup of the general 
linear group $\GL_{n,k}$ for some $n$. We will thus call affine
algebraic $k$-groups \emph{linear}. We say that an algebraic 
$k$-group $G$ is \emph{of multiplicative type} if $G$ is
isomorphic to a $k$-subgroup of some $k$-torus. 

To simplify the notation, \emph{we will suppress the mention of the 
ground field} $k$ whenever this yields no confusion. For example, 
the category $\cC_k$ will be denoted by $\cC$, except when we use 
base change by a field extension.

Given an algebraic group $G$ and two subgroups $G_1,G_2$, 
we denote by $G_1 + G_2$ the subgroup of $G$ generated by $G_1$ 
and $G_2$. Thus, $G_1 + G_2$ is the image of the morphism 
$G_1 \times G_2 \to G$, $(x_1,x_2) \mapsto x_1 + x_2$.

An \emph{isogeny} is a morphism with finite kernel and cokernel.
Two algebraic groups $G_1,G_2$ are \emph{isogenous} if they can 
be connected by a chain of isogenies. 

We say that two subgroups $G_1,G_2$ of an algebraic group 
$G$ are \emph{commensurable} if both quotients $G_1/G_1 \cap G_2$ 
and $G_2/G_1 \cap G_2$ are finite; then $G_1$ and $G_2$ are 
isogenous.

Given an algebraic group $G$ and a non-zero integer $n$,
the multiplication by $n$ yields a morphism 
\[ n_G : G \longrightarrow G. \]
We denote its kernel by $G[n]$, and call it the 
\emph{$n$-torsion subgroup}. We say that $G$ is \emph{divisible}
if $n_G$ is an epimorphism for all $n \neq 0$; then $n_G$ is 
an isogeny for all such $n$. When $\car(k) = 0$, the divisible
groups are exactly the connected algebraic groups; when 
$\car(k) = p > 0$, they are just the \emph{semi-abelian varieties},
that is, the extensions of abelian varieties by tori
(see e.g. \cite[Thm.~5.6.3]{Brion-II} for the latter result). 

Still assuming that $\car(k) = p > 0$, we say that an algebraic
group $G$ is a \emph{$p$-group} if $p^n_G = 0$ for $n \gg 0$. 
Examples of $p$-groups include the unipotent groups and the 
connected finite algebraic groups, also called \emph{infinitesimal}.

\section{Structure of algebraic groups}
\label{sec:scag}

\subsection{Preliminary results}
\label{subsec:pr}

We will use repeatedly the following simple observation:

\begin{lemma}\label{lem:epi}
Let $G$ be a smooth connected algebraic group.

\begin{enumerate}

\item[{\rm (i)}] If $G'$ is a subgroup of $G$ such that
$G/G'$ is finite, then $G' = G$.

\item[{\rm (ii)}] Any isogeny $f: H \to G$ is an epimorphism.

\end{enumerate}

\end{lemma}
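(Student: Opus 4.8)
The plan is to establish (i) directly and then obtain (ii) as a formal consequence. For (ii), observe that the scheme-theoretic image $\Im(f)$ is a closed subgroup of $G$, and that $\Coker(f) = G/\Im(f)$ is finite precisely because $f$ is an isogeny; so once (i) is known, applying it with $G' = \Im(f)$ gives $\Im(f) = G$, hence $\Coker(f) = 0$, which in the abelian category $\cC$ is exactly the assertion that $f$ is an epimorphism. Thus everything reduces to (i).

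For (i) I would separate the argument into a topological step and a scheme-theoretic step. First, since $G/G'$ is finite it has dimension $0$; by additivity of dimension along the faithfully flat quotient map $G \to G/G'$ (whose fibres are torsors under $G'$), this forces $\dim G' = \dim G$. Because $G$ is smooth and connected it is irreducible — its local rings are regular, hence integral domains — and a proper closed subset of an irreducible scheme of finite type over $k$ has strictly smaller dimension. Consequently the closed subset underlying $G'$, being of full dimension $\dim G$, must coincide with $G$: the inclusion $G' \hookrightarrow G$ is a homeomorphism onto $G$.

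The decisive second step is to promote this equality of topological spaces to an equality of schemes. Let $\cI \subset \cO_G$ be the ideal sheaf cutting out $G'$. The first step gives $V(\cI) = G$, so $\cI$ lies in the nilradical of $\cO_G$; and since $G$ is smooth, hence reduced, that nilradical vanishes. Therefore $\cI = 0$ and $G' = G$. This is exactly the point at which smoothness is indispensable: in positive characteristic a connected but non-reduced group scheme may well possess a proper closed subgroup with infinitesimal — in particular finite — quotient, so it is reducedness alone that rules out such an $\cI$ and yields the conclusion.

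I expect the reducedness argument of the last step to be the only genuine obstacle; the dimension-and-irreducibility input is standard for smooth connected groups, and the passage from (i) to (ii) is purely formal once the cokernel is identified with $G/\Im(f)$.
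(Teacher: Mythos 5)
Your proof is correct, and part (ii) is handled exactly as in the paper (apply (i) to $\Im(f)\subset G$ and identify $\Coker(f)$ with $G/\Im(f)$). For part (i), however, you take a genuinely different, "downstairs" route: the paper simply observes that $G/G'$ is a quotient of a smooth connected group, hence itself smooth and connected, and a smooth connected finite group scheme is zero --- three lines resting on the fact that smoothness and connectedness pass to fppf quotients. You instead work inside $G$: additivity of dimension along the $G'$-torsor $G\to G/G'$ forces $\dim G'=\dim G$, irreducibility of the smooth connected $G$ forces $G'$ to be topologically all of $G$, and reducedness of $G$ kills the ideal sheaf of $G'$. The trade-off is that your argument avoids invoking preservation of smoothness under quotients, at the cost of needing that a smooth connected group over a field is integral and that dimension is additive in this fibration; both are standard, and your closing observation that reducedness is the indispensable hypothesis (a non-reduced connected group such as $\bG_a\times\alpha_p$ has the proper subgroup $\bG_a\times 0$ with finite quotient) correctly isolates where the smoothness assumption enters. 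Either proof is acceptable; the paper's is shorter because it pushes the hypotheses forward to the quotient rather than pulling the finiteness back to the subgroup.
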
 

\begin{proof}
(i) The quotient $G/G'$ is smooth, connected and finite,
hence zero.

(ii) This follows from (i) applied to $\Im(f) \subset H$.
\end{proof}

The following lifting result for finite quotients will also 
be frequently used:

\begin{lemma}\label{lem:fin}
Let $G$ be an algebraic group, and $H$ a subgroup such
that $G/H$ is finite. 

\begin{enumerate}

\item[{\rm (i)}] There exists a finite subgroup
$F \subset G$ such that $G = H + F$.

\item[{\rm (ii)}] If $G/H$ is infinitesimal (resp.~a finite $p$-group),
then $F$ may be chosen infinitesimal (resp.~a finite $p$-group) as well.

\end{enumerate}

\end{lemma}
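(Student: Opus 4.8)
The plan is to analyze the finite quotient $Q := G/H$ through its connected--\'etale sequence $0 \to Q^0 \to Q \to Q^{\et} \to 0$, with $Q^0$ infinitesimal and $Q^{\et}$ \'etale, treating the two pieces by different devices and then splicing them. Writing $\pi\colon G \to Q$ for the quotient map and setting $H_1 := \pi^{-1}(Q^0)$, we have $H_1/H = Q^0$ and $G/H_1 = Q^{\et}$. If I can solve the lifting problem for an \'etale quotient (the pair $H_1 \subset G$, quotient $Q^{\et}$) and for an infinitesimal quotient (the pair $H \subset H_1$, quotient $Q^0$), producing finite subgroups $F_1 \subset G$ with $G = H_1 + F_1$ and $F_0 \subset H_1$ with $H_1 = H + F_0$, then $F := F_0 + F_1$ is finite and $G = H + F$. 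Part (ii) will follow by inspecting the constructions: $F_0$ will be infinitesimal, so if $Q$ is infinitesimal then $Q^{\et}=0$ and $F=F_0$ is infinitesimal, while if $Q$ is a finite $p$-group then both $F_0$ and $F_1$ can be taken to be finite $p$-groups.

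The infinitesimal case is the clean one, and it is where Lemma~\ref{lem:epi} does the work. Suppose $Q = G/H$ is infinitesimal. The quotient $Q/\pi(G^0)$ is a quotient both of the infinitesimal group $Q$ and of the \'etale group $G/G^0$, hence vanishes, so $\pi(G^0) = Q$ and I may replace $G$ by $G^0$ to assume $G$ connected. Take $F := \Ker(\Fr^r_{G})$, the kernel of the $r$-th relative Frobenius, a finite infinitesimal subgroup. The key point is that for $r \gg 0$ the image $\Fr^r(G)$ is smooth: this may be checked over $\bar k$, where it equals $\Fr^r(G_{\red})$ once $r$ exceeds the height of $G/G_{\red}$, and it is connected since $G$ is. Because relative Frobenius is a homeomorphism on underlying spaces, $\dim \Fr^r(H) = \dim H = \dim G = \dim \Fr^r(G)$, so $\Fr^r(H)$ is a finite-index subgroup of the smooth connected group $\Fr^r(G)$; by Lemma~\ref{lem:epi}(i) this forces $\Fr^r(H) = \Fr^r(G)$. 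Applying $\Fr^r_G$, which is surjective onto $\Fr^r(G)$ with kernel $F$, yields $G/(H + F) \cong \Fr^r(G)/\Fr^r(H) = 0$, that is, $G = H + F$, with $F$ infinitesimal as required.

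For the \'etale case, write $Q = G/H$ \'etale. Then $\pi(G^0)$ is a connected subgroup of the \'etale group $Q$, hence trivial, so $G^0 \subseteq H$ and the task becomes that of lifting the \'etale quotient $Q$ of $G/G^0$ to a finite subgroup of $G$; equivalently, to produce a finite subgroup surjecting onto a quotient of the component group. The strategy is to lift generators and correct them to finite order. A generator lifts to a point $g \in G(\bar k)$ (the fibre is a torsor under $H$, trivial over $\bar k$), and $Ng \in G^0$ for $N$ an exponent of $Q$; one then adjusts $g$ by an element of $G^0$ to make it torsion. Here the divisibility of the connected part enters: $G^0$ is $\ell$-divisible for every prime $\ell \neq \car(k)$, while for $\ell = \car(k)$ one uses that $G^0$ is, up to isogeny, a product of a ($p$-divisible) semi-abelian variety and a unipotent group, the latter being killed by a power of $p$. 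In either case the corrected lift has finite order, the $k$-subgroup spanned by its $\Gamma_k$-orbit is finite and surjects onto $Q$, and when $Q$ is a $p$-group the orders are powers of $p$; this furnishes $F_1$ and the $p$-group refinement in (ii).

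The main obstacle is the \'etale case rather than the infinitesimal one. Over a general, possibly imperfect and non-closed field, one must ensure simultaneously that the lift of the component group can be chosen of finite order --- the content of the divisibility dichotomy above, which amounts to the vanishing of the relevant connecting map $Q \to H/NH$ --- and that the resulting subgroup descends to $k$, handled by passing to the $\Gamma_k$-orbit. By contrast, the infinitesimal case is forced by Lemma~\ref{lem:epi} once one knows the elementary fact that a high Frobenius power has smooth image. I expect the only genuinely delicate verification to be the finiteness of the corrected lift when $\car(k) = p$ and $G^0$ carries both a semi-abelian and a unipotent part, which is exactly where the structure of connected groups up to isogeny is invoked.
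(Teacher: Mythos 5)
The paper does not actually prove part (i): it quotes it as a special case of \cite[Thm.~1.1]{Brion} (a paper devoted precisely to this lifting property, see also \cite{Lucchini}), and its only argument is the derivation of (ii) from (i), by replacing a given $F$ with $F^0$, resp.\ with its largest $p$-subgroup $F[p^{\infty}]$. You are therefore attempting to reprove the external input. Your reduction along the connected--\'etale sequence of $G/H$ is a reasonable first step, and your infinitesimal case is correct: it is the same Frobenius-kernel device as in Proposition \ref{prop:sc}(i), and Lemma \ref{lem:epi}(i) applied to $\Fr^r(H)\subset\Fr^r(G)$ does close it.

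The gap is in the \'etale case, and it sits exactly where the cited theorem is hard: rationality over $k$. Your lift $g$ of a generator of $Q^{\et}(k_s)$ exists only in $G(\bar k)$, because the fibre of $G\to Q^{\et}$ is a torsor under $H_1$, and $H_1$ need not be smooth, so this torsor need not be trivial over $k_s$ (for imperfect $k$ one has $H^1_{\mathrm{fppf}}(k_s,\alpha_p)\cong k_s/k_s^p\neq 0$). The divisibility correction compounds the problem: division points, in particular $p$-division points in characteristic $p$, need not lie in $G(k_s)$. So the finite subgroup generated by the corrected lift is a priori only a subgroup of $G_{\bar k}$, and ``passing to the $\Gamma_k$-orbit'' performs only separable descent --- it does not descend a subgroup scheme from $\bar k$ to $k_s$, which is the genuinely delicate point over imperfect fields (not, as you suggest, the semi-abelian/unipotent dichotomy, which is fine). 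Two smaller slips: $Ng$ lands in $H_1$, not in $G^0$, when $N$ is an exponent of $Q$ (you need $N$ to kill all of $G/G^0$, or a further correction modulo the finite group $H_1/G^0$); and this extra exponent introduces prime-to-$p$ torsion, so in the $p$-group refinement of (ii) you must additionally pass to the $p$-primary part of the resulting finite subgroup --- which is precisely the paper's own argument for (ii).
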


\begin{proof}
(i) This is a special case of \cite[Thm.~1.1]{Brion}.

(ii) Assume $G/H$ infinitesimal. Then the quotient 
$G/(H + F^0)$ is infinitesimal (as a quotient of $G/H$) and 
\'etale (as a quotient of $F/F^0$), hence zero. Thus, we
may replace $F$ with $F^0$, an infinitesimal subgroup.

Next, assume that $G/H$ is a finite $p$-group. Denote by 
$F[p^{\infty}]$ the largest $p$-subgroup of $F$. Then the quotient
$G/(H + F[p^{\infty}])$ is a finite $p$-group and is killed by the
order of $F/F[p^{\infty}]$. Since the latter order is prime to $p$,
we obtain $G/(H + F[p^{\infty}]) = 0$. Thus, we may replace $F$ 
with $F[p^{\infty}]$.  
\end{proof}

Next, we recall a version of a theorem of Chevalley:

\begin{theorem}\label{thm:che}

\begin{enumerate}

\item[{\rm (i)}]
Every algebraic group $G$ contains a linear subgroup $L$
such that $G/L$ is an abelian variety. Moreover, $L$ is 
unique up to commensurability in $G$, and $G/L$ is unique
up to isogeny. 

\item[{\rm (ii)}]
If $G$ is connected, then there exists a smallest such
subgroup, $L = L(G)$, and this subgroup is connected.

\item[{\rm (iii)}]
If in addition $G$ is smooth, then every morphism from 
$G$ to an abelian variety factors uniquely through 
the quotient map $G \to G/L(G)$.

\end{enumerate}

\end{theorem}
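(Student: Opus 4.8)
The plan is to reduce all three parts to the classical Chevalley decomposition of a smooth connected group over a perfect field, together with one elementary but decisive observation: every linear subgroup $H$ of an abelian variety $A$ is finite. Indeed, $H$ is closed in $A$ and hence proper over $k$, and being affine as well it must be finite. This is the fact that will pin down every uniqueness assertion below.

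For the existence statement in (i), I would first reduce to the case where $G$ is connected. Granting that case, choose a linear subgroup $L_0 \subseteq G^0$ with $G^0/L_0$ an abelian variety; by Lemma \ref{lem:fin}(i) applied to $G^0 \subseteq G$ there is a finite subgroup $F \subseteq G$ with $G = G^0 + F$. Then $L := L_0 + F$ is the image of $L_0 \times F$, hence linear, and $G/L = G^0/(G^0 \cap L)$ is the quotient of the abelian variety $G^0/L_0$ by the finite group $(G^0 \cap L)/L_0$, hence again an abelian variety. The connected case, including the existence of a \emph{smallest} such subgroup and its connectedness (that is, the bulk of (ii)), is where the genuine difficulty lies, and I would quote it from \cite[\S 5]{Brion-II}. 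The main obstacle is precisely this connected existence over an imperfect field: over the perfect closure $k^{1/p^\infty}$ the classical theorem produces the affine part, already defined over some finite purely inseparable $k^{1/p^n}$, but the subgroup descending to $k$ need not be smooth. One must therefore allow $L$ to be non-smooth and check, by a relative Frobenius argument, that one still obtains a linear $k$-subgroup with abelian variety quotient; this is exactly the content of the commutative version of Chevalley's theorem valid over an arbitrary field, which I take as input.

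The uniqueness clauses are then pure bookkeeping via the observation of the first paragraph. Given linear subgroups $L, L'$ with $G/L$ and $G/L'$ abelian varieties, the image of $L$ in $G/L'$ is $L/(L \cap L')$, a linear subgroup of an abelian variety, hence finite; by symmetry $L'/(L \cap L')$ is finite, so $L$ and $L'$ are commensurable, and consequently $G/L$ and $G/L'$ are isogenous. Combined with the smallest connected $L(G)$ furnished by the cited result, this yields all of (ii): any linear $L$ with $G/L$ an abelian variety is commensurable with $L(G)$ and, by minimality, contains it, so $L(G)$ is the smallest such subgroup and $G/L(G)$ is determined up to isogeny.

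For (iii), with $G$ smooth and connected, I would identify the quotient map $G \to G/L(G)$ with the Albanese morphism and invoke its universal property. Concretely, given a morphism $u \colon G \to A$ into an abelian variety, $u(G)$ is a smooth connected proper subgroup of $A$, hence an abelian variety, while $u(L(G))$ is a linear subgroup of $A$, hence finite. The universal property of the Albanese of a smooth connected group (classical; see e.g. \cite[\S 3]{Conrad}) forces $u$ to kill $L(G)$ — this is where smoothness of $G$ enters, guaranteeing that the affine kernel of the Albanese morphism, namely $L(G)$, admits no nonzero homomorphism to an abelian variety even when it is itself non-smooth — so that $u$ factors through $G/L(G)$. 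Uniqueness of the factorization is immediate, since $G \to G/L(G)$ is an epimorphism by Lemma \ref{lem:epi}(ii).
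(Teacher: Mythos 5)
Your proposal is correct and follows essentially the same route as the paper: both reduce the general existence in (i) to the connected case via Lemma \ref{lem:fin} (you push forward $L_0+F$ where the paper pulls back from $G/F$, a cosmetic difference), both outsource the connected case (ii) to the literature, both derive the uniqueness clauses from the observation that an affine subgroup of an abelian variety is proper and hence finite, and both handle (iii) by citing the universal property of the Albanese quotient. No substantive divergence or gap.
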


\begin{proof}
The assertion (ii) follows from \cite[Lem.~IX 2.7]{Raynaud}
(see also \cite[9.2 Thm.~1]{BLR}).

To prove (i), note that $G$ contains a finite subgroup
$F$ such that $G/F$ is connected (as follows from Lemma 
\ref{lem:fin}). Then we may take for $L$ the pull-back of 
a linear subgroup of $G/F$ with quotient an abelian variety. 
If $L'$ is another linear subgroup of $G$ such that $G/L'$ 
is an abelian variety, then $L + L'$ is linear, as a quotient 
of $L \times L'$. Moreover, the natural map 
$q : G/L \to G/(L + L')$ is the quotient by $(L + L')/L$, 
a linear subgroup of the abelian variety $G/L$. 
It follows that $(L+L')/L$ is finite; thus, $q$ is an isogeny and 
$L'/L \cap L'$ is finite. Likewise, $q' : G/L' \to G/(L + L')$ 
is an isogeny and $L/L \cap L'$ is finite; this completes 
the proof of (i). 

Finally, the assertion (iii) is a consequence of 
\cite[Thm.~4.3.4]{Brion-II}. 
\end{proof}

The linear algebraic groups may be described 
as follows (see \cite[Thm.~IV.3.1.1]{DG}):

\begin{theorem}\label{thm:lin}
Let $G$ be a linear algebraic group. Then $G$ has a largest
subgroup of multiplicative type, $M$; moreover, $G/M$ is unipotent.
If $k$ is perfect, then $G = M \times U$, where $U$ denotes the
largest unipotent subgroup of $G$. 
\end{theorem}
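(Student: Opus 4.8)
The three assertions are of rather different natures, so I would establish them separately, isolating the genuinely new content: parts (i) and (ii) over an imperfect field of positive characteristic.

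When $\car(k) = 0$, the field $k$ is automatically perfect and the whole statement is classical. The neutral component $G^0$ is a direct product of a torus and a vector group, the component group $G/G^0$ is finite \'etale, and one reads off the largest subgroup of multiplicative type $M$ and the largest unipotent subgroup $U$ directly, together with the splitting $G = M \times U$. I would therefore assume $\car(k) = p > 0$ throughout the proof of (i) and (ii).

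In positive characteristic the efficient tool is the Verschiebung homomorphism $V = V_G \colon G^{(p)} \to G$ (see \cite{DG}), together with its two standard characterizations: $G$ is of multiplicative type precisely when $V_G$ is an isomorphism, and $G$ is unipotent precisely when the iterated Verschiebung $V^{(n)} \colon G^{(p^n)} \to G$ vanishes for $n \gg 0$. Writing $M_n := \Im(V^{(n)}) \subseteq G$, I obtain a descending chain $G \supseteq M_1 \supseteq M_2 \supseteq \cdots$, which stabilizes since $G$ is noetherian as a scheme; set $M := \bigcap_n M_n = M_N$ for $N \gg 0$. On $M$ the Verschiebung is surjective, hence (the chain having stabilized) an isomorphism, so $M$ is of multiplicative type; on $G/M$ the iterated Verschiebung vanishes, so $G/M$ is unipotent, which gives (ii). For (i), any subgroup $M' \subseteq G$ of multiplicative type has $V$ an isomorphism, hence satisfies $M' \subseteq \Im(V^{(n)}) = M_n$ for all $n$, so $M' \subseteq M$; thus $M$ is the largest subgroup of multiplicative type.

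For the last assertion I would use that $k$ is perfect. Over a perfect field there are no nonzero homomorphisms and no nontrivial extensions between unipotent groups and groups of multiplicative type; in particular $\Ext^1(G/M, M) = 0$, so the exact sequence $0 \to M \to G \to G/M \to 0$ splits and yields $G = M \times U$ with $U \cong G/M$ unipotent. Since $\Hom(M', U) = 0$ for every subgroup $M'$ of multiplicative type, any unipotent subgroup of $G$ projects trivially to the factor $M$ and hence lies in $U$, which identifies $U$ as the largest unipotent subgroup. The main obstacle is exactly the failure of this final step over an imperfect field: there the relevant $\Ext^1$ (for instance, extensions of a unipotent group by a torus) is nonzero, so the sequence need not split, and only the exact sequence furnished by (i) and (ii) survives. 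This is also why I would avoid a Galois-descent proof of (i) and (ii), which controls the separable direction but stumbles on the purely inseparable part of $\bar{k}/k$; the Verschiebung argument sidesteps base change altogether.
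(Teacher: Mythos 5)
The paper does not actually prove this statement: it is quoted from \cite[Thm.~IV.3.1.1]{DG}, so there is no internal argument to compare yours against. Your Verschiebung proof is correct and is essentially the standard one. Two remarks. First, the real weight of your argument is carried by the two quoted characterizations ($G$ multiplicative iff $V_G$ is an epimorphism, $G$ unipotent iff $V_G$ is nilpotent); the ``hard'' directions of these are themselves proved by reducing to the algebraically closed case, so your closing claim that the Verschiebung argument ``sidesteps base change altogether'' is an overstatement. What it really does is produce $M = \bigcap_n \Im(V_G^n)$ by a construction that is canonical and commutes with field extensions, so that the purely inseparable descent from $\bar{k}$ becomes automatic --- that is exactly the right mechanism, but it is descent nonetheless. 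Relatedly, the parenthetical ``(the chain having stabilized) an isomorphism'' does not follow: stabilization gives you that $V_M$ is an epimorphism (via $\Im(V_G^{N+1}) = V_G\bigl((\Im V_G^N)^{(p)}\bigr)$ and naturality of $V$), and surjectivity of $V_M$ only forces $\Ker(V_M)$ to be finite, not zero; you should either invoke the ``epimorphism'' form of the criterion or check multiplicativity of $M$ after base change to $\bar{k}$. Second, in the perfect-field step the vanishing $\Ext^1_{\cC}(G/M, M) = 0$ for $G/M$ unipotent and $M$ of multiplicative type is essentially equivalent to the splitting assertion you are proving, so citing it as ``standard'' leaves the third part of the theorem without independent content; over a perfect field one should supply the classical argument (pass to $G_{\red}$ and the connected--\'etale sequence, Jordan decomposition on points for the smooth part, Cartier duality or Dieudonn\'e theory for the infinitesimal part). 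With those two points tightened, your proof is complete and, unlike a naive Galois-descent approach, correctly handles the inseparable direction, which is precisely where the splitting fails.
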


Also, note the following orthogonality relations:

\begin{proposition}\label{prop:hom}

\begin{enumerate}

\item[{\rm (i)}] Let $M$ be a group of multiplicative type, 
and $U$ a unipotent group. Then 
$\Hom_\cC(M,U) = 0 = \Hom_\cC(U,M)$.

\item[{\rm (ii)}] Let $L$ be a linear algebraic group,
and $A$ an abelian variety. Then $\Hom_\cC(A,L) = 0$, and
every morphism $L \to A$ has finite image. Moreover,
$\Hom_\cC(L,A)$ is $n$-torsion for some positive integer $n$.

\end{enumerate}

\end{proposition}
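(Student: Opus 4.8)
For part (i), I would argue via the scheme-theoretic image. Given $f \in \Hom_\cC(M,U)$, the image $\Im(f)$ is a quotient of $M$, hence of multiplicative type, and a $k$-subgroup of $U$, hence unipotent; since a group scheme that is simultaneously of multiplicative type and unipotent is trivial (see \cite{DG}), we get $\Im(f) = 0$, so $f = 0$. The symmetric argument applied to $f \in \Hom_\cC(U,M)$ — whose image is a quotient of $U$, hence unipotent, and a $k$-subgroup of $M$, hence of multiplicative type — gives $\Hom_\cC(U,M) = 0$.

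For the first two assertions of (ii), I would use that an affine proper scheme of finite type over $k$ is finite. If $f \in \Hom_\cC(A,L)$, then $\Im(f) \cong A/\Ker(f)$ is smooth, connected and proper, i.e. an abelian variety, and it is also a closed subgroup of the affine group $L$, hence affine; being proper and affine it is finite, and a finite abelian variety is trivial, so $\Hom_\cC(A,L) = 0$. Dually, if $f \in \Hom_\cC(L,A)$, then $\Im(f) \cong L/\Ker(f)$ is affine (a quotient of the linear group $L$) and a closed subgroup of $A$, hence proper; therefore $\Im(f)$ is finite.

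For the torsion assertion, the plan is a dévissage reducing $L$ to multiplicative and unipotent pieces, using that $\Hom_\cC(-,A)$ is left exact and that torsion bounds multiply along short exact sequences. The component group $\pi_0(L) = L/L^0$ is finite of some exponent $e$, and the sequence $0 \to L^0 \to L \to \pi_0(L) \to 0$ shows it suffices to bound $\Hom_\cC(L^0,A)$; thus I may assume $L$ connected. Then Theorem \ref{thm:lin} provides $0 \to M \to L \to V \to 0$ with $M$ of multiplicative type and $V$ unipotent, so it suffices to bound $\Hom_\cC(V,A)$ and $\Hom_\cC(M,A)$ separately. For $V$: if $\car(k) = 0$ then $V$ is a connected vector group, hence divisible, and its finite image in $A$ is trivial, so $\Hom_\cC(V,A) = 0$; if $\car(k) = p$ then $V$ is a $p$-group killed by some $p^s$, whence $p^s \, \Hom_\cC(V,A) = 0$. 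For $M$: after passing to $M^0$ (its finite component group again contributing a fixed exponent) and base changing to $\kb$, a connected group of multiplicative type is a product of a torus and an infinitesimal diagonalizable group; the torus admits no nonzero morphism to $A$ (a finite quotient of a torus is trivial), while the infinitesimal factor is killed by a fixed power of $p$.

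The main obstacle is making these bounds \emph{uniform}, i.e. independent of the individual $f$. The point I would check carefully is that each exponent produced above depends only on $L$ — through the fixed invariants $e$, the exponent of the unipotent part, and the annihilator of the infinitesimal part of $M$ — and not on $f$. Here the key device is that the base-change map $\Hom_\cC(L,A) \to \Hom(L_{\kb}, A_{\kb})$ is injective by faithfully flat descent, so that all torsion bounds may legitimately be computed after passing to $\kb$, where the structure of groups of multiplicative type is transparent. Multiplying the finitely many resulting exponents then yields a single integer $n$ with $n \, \Hom_\cC(L,A) = 0$.
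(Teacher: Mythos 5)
Your proof is correct. Parts (i) and the first two assertions of (ii) coincide with the paper's argument (the paper simply cites \cite[Cor.~IV.2.2.4]{DG} for (i) and uses the same proper-plus-affine observation for (ii)). For the torsion assertion you take a genuinely different, more hands-on route. The paper's proof is shorter: after base changing to a perfect field (legitimate for the same reason you give, injectivity of $\Hom_\cC(L,A) \to \Hom(L_{k'},A_{k'})$), the reduced neutral component $L^0_\red$ is a smooth connected subgroup, and any morphism $L^0_\red \to A$ is zero because its image is both finite and a quotient of a smooth connected group; hence $\Hom_\cC(L,A) = \Hom_\cC(L/L^0_\red,A)$, which is killed by the order $n$ of the finite group scheme $L/L^0_\red$ by \cite[VIIA, Prop.~8.5]{SGA3}. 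This produces the uniform bound in one stroke and entirely sidesteps the multiplicative/unipotent dévissage. Your argument instead peels off $\pi_0(L)$, then splits $L^0$ via Theorem \ref{thm:lin} and treats the unipotent and multiplicative pieces separately over $\kb$; each step is sound (torsion exponents do multiply along the left-exact sequences for $\Hom_\cC(-,A)$, a finite quotient of a torus is indeed trivial, and the infinitesimal diagonalizable and unipotent parts are killed by fixed powers of $p$), and the resulting bound is visibly independent of $f$. What your approach buys is a more explicit description of where the torsion comes from (component group, $p$-power torsion of the unipotent and infinitesimal multiplicative parts); what it costs is length and a case division by characteristic that the paper avoids entirely.
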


\begin{proof}
(i) This follows from \cite[Cor.~IV.2.2.4]{DG}.

(ii) The image of a morphism $A \to L$ is proper, smooth,
connected and affine, hence zero. Likewise, the image of 
a morphism $L \to A$ is affine and proper, hence finite.

To show the final assertion, we may replace $k$ with any field 
extension, and hence assume that $k$ is perfect. Then the 
reduced neutral component $L^0_\red$ is a smooth connected subgroup 
of $L$, the quotient $L/L^0_\red$ is finite, and 
$\Hom_\cC(L^0_\red,A) = 0$ by the above argument. Thus, 
$\Hom_\cC(L,A) = \Hom_\cC(L/L^0_\red, A)$ and this group is $n$-torsion,
where $n$ denotes the order of the finite group scheme $L/L^0_\red$ 
(indeed, $L/L^0_\red$ is $n$-torsion in view of 
\cite[VIIA, Prop.~8.5]{SGA3}).
\end{proof}

Next, we obtain a key preliminary result. To state it, recall
that a unipotent group $G$ is \emph{split} if it admits a finite
increasing sequence of subgroups 
$0 = G_0 \subset G_1 \subset \cdots \subset G_n = G$ such that
$G_i/G_{i-1} \cong \bG_a$ for $i = 1, \ldots, n$.

\begin{proposition}\label{prop:sc}
Let $G$ be an algebraic group. 

\begin{enumerate}

\item[{\rm (i)}] 
There exists a finite subgroup $F \subset G$ such that 
$G/F$ is smooth and connected.  

\item[{\rm (ii)}] 
If $G$ is unipotent, then we may choose $F$ such that $G/F$ is split.

\end{enumerate}

\end{proposition}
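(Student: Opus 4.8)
The plan is to prove both parts by reducing to the two structural facts already at hand: the Chevalley decomposition (Theorem~\ref{thm:che}) and the lifting of finite quotients (Lemma~\ref{lem:fin}).

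For part (i), I would first arrange that $G$ be smooth. Since $G_\red$ need not be a subgroup over an imperfect field, I would instead use that $G/G^0$ is finite and $G^0$ is connected; the genuine issue is smoothness in positive characteristic. The natural approach is to pass to a perfect closure: over a perfect field $G_\red$ is a smooth subgroup with $G/G_\red$ finite (in fact infinitesimal), so Lemma~\ref{lem:fin}(ii) lifts the infinitesimal quotient to a finite (indeed infinitesimal) subgroup $F$ with $G = G_\red + F$, whence $G/F$ is a quotient of the smooth group $G_\red$ and is therefore smooth. The point I expect to need care with is descending this $F$ back to the original field $k$: one wants an infinitesimal subgroup $F \subset G$, defined over $k$, such that $G/F$ is smooth and connected. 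I would handle connectedness separately and first, using Lemma~\ref{lem:fin} applied to $H = G^0$ to extract a finite $F_0$ with $G = G^0 + F_0$, so that $G/F_0$ is connected; then one is reduced to making a connected group smooth by quotienting by an infinitesimal subgroup, which is where the Frobenius kernels enter. Concretely, for a connected $G$ in characteristic $p$, some iterated relative Frobenius kernel $\Ker(F^n_{G/k})$ is an infinitesimal subgroup whose quotient is smooth, and this is defined over $k$.

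For part (ii), assume $G$ is unipotent. By part (i) there is a finite $F$ with $G/F$ smooth connected unipotent, and in characteristic $p$ we may take $F$ to be a finite $p$-group by Lemma~\ref{lem:fin}(ii) (a unipotent group has only $p$-power torsion). It then suffices to show that a smooth connected unipotent group becomes split after a further quotient by a finite subgroup — or better, is already split over $k$ up to isogeny. The standard tool is the existence of a filtration with successive quotients $k$-forms of $\bG_a$, i.e.\ one-dimensional smooth connected unipotent groups; each such form is isogenous to $\bG_a$, so quotienting by the (finite) kernels of these isogenies stage by stage produces a split group. I would assemble these finitely many finite kernels into a single finite subgroup $F' \supset F$ of $G$ and verify that $G/F'$ is split.

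The main obstacle I anticipate is part (i) over an \emph{imperfect} field, precisely because the reduced subscheme $G_\red$ is not a subgroup there, so the clean ``$G_\red$ is smooth'' argument is unavailable over $k$ itself. The cleanest route is to produce the smoothing infinitesimal subgroup intrinsically via Frobenius: I would show that for connected $G$ the kernel of a high enough power of relative Frobenius is infinitesimal with smooth quotient, invoking that $G/G^0$ finite and that relative Frobenius kills the infinitesimal obstruction to smoothness in a controlled number of steps. Getting the bookkeeping right so that a \emph{single} finite $F$ simultaneously achieves both connectedness and smoothness (combining $F_0$ from Lemma~\ref{lem:fin} with the Frobenius kernel) is the delicate point.
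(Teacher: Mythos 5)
Your proposal follows essentially the same route as the paper: for (i), reduce to the connected case via Lemma \ref{lem:fin} and then kill non-smoothness with an iterated relative Frobenius kernel (which is defined over $k$, so the detour through the perfect closure you first contemplate is unnecessary, as you yourself conclude); for (ii), filter $G$ by $k$-forms of $\bG_a$ and use that each such form admits $\bG_a$ as a quotient by a finite infinitesimal subgroup, which the paper extracts from Russell's explicit presentation of forms of $\bG_a$. The one step you leave vague --- assembling the finite kernels living in successive subquotients into a single finite subgroup of $G$ --- is handled in the paper by an induction on $\dim G$ (split off a proper subgroup $G_1$, apply the induction hypothesis to $G/G_1$ and then to the resulting $G_2$, and use that an extension of split groups is split), and your sketch is compatible with that.
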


\begin{proof}
(i) By Lemma \ref{lem:fin}, we have $G = G^0 + F$ for some
finite subgroup $F \subset G$. Thus, $G/F \cong G^0/F \cap G^0$ 
is connected; this completes the proof when $\car(k)=0$. 

When $\car(k) = p > 0$, we may assume $G$ connected by the above step. 
Consider the relative Frobenius morphism $F_{G/k}: G \to G^{(p)}$ 
and its iterates $F^n_{G/k} : G \to G^{(p^n)}$, where $n \geq 1$. 
Then $\Ker(F^n_{G/k})$ is finite for all $n$; moreover, 
$G/\Ker(F^n_{G/k})$ is smooth for $n \gg 0$ (see 
\cite[VIIA, Prop.~8.3]{SGA3}), and still connected. 

(ii) We argue by induction on the dimension of $G$. 
The statement is obvious if $\dim(G) = 0$. In the
case where $\dim(G) = 1$, we may assume that $G$ is smooth 
and connected in view of Lemma \ref{lem:fin} again; then 
$G$ is a $k$-form of $\bG_a$. 
By \cite[Thm.~2.1]{Russell}, there exists an exact sequence
\[ 0 \longrightarrow G \longrightarrow \bG_a^2 
\stackrel{f}{\longrightarrow} \bG_a \longrightarrow 0, \]
where $f \in \cO(\bG_a^2) \cong k[x,y]$ satisfies
$f(x,y) = y^{p^n} - a_0 \, x - a_1 \, x^p - \cdots - a_m \, x^{p^m}$
for some integers $m,n \geq 0$ and some $a_0,\ldots,a_m \in k$
with $a_0 \neq 0$. Thus, the projection 
\[ p_1 : G \longrightarrow \bG_a, \quad (x,y) \longmapsto x \]
lies in an exact sequence
\[ 0 \longrightarrow \alpha_{p^n} \longrightarrow G
\stackrel{p_1}{\longrightarrow} \bG_a \longrightarrow 0, \]
where $\alpha_{p^n}$ denotes the kernel of the endomorphism
$x \mapsto x^{p^n}$ of $\bG_a$. This yields the assertion 
in this case.

If $\dim(G) \geq 2$, then we may choose a subgroup 
$G_1 \subset G$ such that $0 < \dim(G_1) < \dim(G)$
(as follows from \cite[Prop.~IV.2.2.5]{DG}). 
By the induction assumption for $G/G_1$, there exists 
a subgroup $G_2 \subset G$ such that $G_1 \subset G_2$, 
$G_2/G_1$ is finite, and $G/G_2$ is split. Next, the induction 
assumption for $G_2$ yields a finite subgroup $F \subset G_2$ 
such that $G_2/F$ is split. Then $G/F$ is split as well.
\end{proof}

\begin{remark}\label{rem:sc}
By Proposition \ref{prop:sc}, every algebraic group $G$ admits 
an isogeny $u : G \to H$, where $H$ is smooth and connected. 
If $k$ is perfect, then there also exists an isogeny 
$v: K \to G$, where $K$ is smooth and connected: just
take $v$ to be the inclusion of the reduced neutral component 
$G^0_\red$. But this fails over any imperfect field $k$.
Indeed, if such an isogeny $v$ exists, then its image must be 
$G^0_\red$. On the other hand, by \cite[VIA, Ex.~1.3.2]{SGA3}, 
there exists a connected algebraic group $G$ such that $G_\red$ 
is not a subgroup.
\end{remark}

By combining Lemma \ref{lem:fin}, Theorems \ref{thm:che} and
\ref{thm:lin}, and Proposition \ref{prop:sc}, we obtain readily:

\begin{proposition}\label{prop:el}
Every algebraic group $G$ admits a finite increasing sequence
of subgroups
$0 = G_0 \subset G_1 \subset \cdots \subset G_n = G$
such that each $G_i/G_{i-1}$, $i = 1, \ldots, n$, is finite or 
isomorphic to $\bG_a$, a simple torus, or a simple abelian variety.
Moreover, $G$ is linear if and only if no abelian variety occurs.
\end{proposition}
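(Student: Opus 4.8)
The plan is to build the filtration by iterating the structural results already established, peeling off one "elementary" piece at a time via dévissage. First I would reduce to the smooth connected case: by Proposition \ref{prop:sc}(i) there is a finite subgroup $F \subset G$ with $G/F$ smooth and connected, so inserting $F$ as the first step of the filtration (it is finite, hence allowed as a subquotient) reduces the problem to constructing a suitable filtration of $G/F$. Thus I may assume $G$ is smooth and connected.

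Next I would apply the Chevalley decomposition of Theorem \ref{thm:che}: for $G$ smooth connected there is a connected linear subgroup $L = L(G)$ with $G/L$ an abelian variety. By Poincaré's reducibility, the abelian variety $G/L$ is isogenous to a product of simple abelian varieties, so it admits a filtration with simple abelian variety subquotients; pulling this back gives the top part of the filtration of $G$, reducing matters to the linear group $L$. Now I would run the structure of linear groups from Theorem \ref{thm:lin}: $L$ has a largest subgroup of multiplicative type $M$ with $L/M$ unipotent. For the unipotent quotient $L/M$, Proposition \ref{prop:sc}(ii) furnishes a finite $F'$ such that $(L/M)/F'$ is \emph{split}, i.e.\ has an increasing filtration with all subquotients isomorphic to $\bG_a$; this is exactly the sequence of $\bG_a$'s required. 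For the multiplicative-type group $M$, I would pass to a torus: $M$ is a subgroup of a torus, and up to inserting a finite subquotient I may reduce to a torus $T$, which over $k_s$ splits but over $k$ admits a filtration by simple tori (a torus always has a filtration whose successive quotients are simple tori, since the character lattice is a $\Gamma_k$-module of finite rank and hence has a composition series).

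Assembling these pieces: concatenate the filtration of $M$ (simple tori, with finite subquotients as needed), followed by the filtration of $L/M$ (copies of $\bG_a$, with a finite subquotient), followed by the filtration of $G/L$ (simple abelian varieties), and prepend the finite subgroup $F$ from the first reduction. Each successive quotient is then finite, or isomorphic to $\bG_a$, a simple torus, or a simple abelian variety, which is the desired statement. For the final clause, I would argue that $G$ is linear if and only if the abelian variety part $G/L(G)$ is finite (equivalently zero up to isogeny), which happens exactly when no simple abelian variety appears as a subquotient; one direction is immediate since subgroups and quotients of linear groups are linear, and the other follows because an extension of finite and linear groups by $\bG_a$'s and simple tori is linear, whereas a simple abelian variety subquotient obstructs linearity by Proposition \ref{prop:hom}(ii).

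The main obstacle I anticipate is the careful bookkeeping of the non-smooth and multiplicative-type contributions: the reductions via Proposition \ref{prop:sc} and the passage from a group of multiplicative type to a torus each introduce \emph{finite} subquotients that must be threaded correctly into a single increasing chain of genuine closed subgroups of $G$ (not merely of its isogeny factors). Each local filtration lives on a subquotient, so I must lift it back to an honest chain in $G$ using the pull-back and the lifting of finite quotients from Lemma \ref{lem:fin}; verifying that these lifts patch into one monotone sequence $0 = G_0 \subset \cdots \subset G_n = G$ with the advertised subquotients is the delicate part, though no single step is hard. Hence the authors' remark that the proposition follows "readily" by combining the earlier results is justified, provided one is willing to do this routine gluing.
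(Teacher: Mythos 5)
Your proof is correct and is precisely the d\'evissage the paper has in mind: the paper gives no explicit argument, stating only that the proposition follows readily by combining Lemma \ref{lem:fin}, Theorems \ref{thm:che} and \ref{thm:lin}, and Proposition \ref{prop:sc}, which is exactly what you do. The one slight imprecision is the reduction from the multiplicative-type group $M$ to a torus: embedding $M$ in a torus does not directly yield a filtration, and the cleaner route is that $M$ \emph{contains} its maximal torus $T$ with $M/T$ finite (as used in the proof of Theorem \ref{thm:pos}), but this does not affect the validity of your argument.
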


\subsection{Characteristic zero}
\label{subsec:cz}

In this subsection, we assume that $\car(k) = 0$. Recall 
that every unipotent group is isomorphic to the additive group 
of its Lie algebra via the exponential map; this yields an 
equivalence between the category $\cU$ of unipotent groups and 
the category of finite-dimensional $k$-vector spaces (see 
\cite[Prop.~IV.2.4.2]{DG}). In particular, every unipotent
group is connected. 

Next, consider a connected algebraic group $G$. By Theorem 
\ref{thm:che}, there is a unique exact sequence 
$0 \to L \to G \to A \to 0$,
where $A$ is an abelian variety, and $L$ is connected and linear. 
Moreover, in view of Theorem \ref{thm:lin}, we have
$L = T \times U$, where $T$ is a torus and $U$ is unipotent.

We now extend the latter structure results to possibly 
non-connected groups:

\begin{theorem}\label{thm:zero}

\begin{enumerate}

\item[{\rm (i)}] 
Every algebraic group $G$ lies in an exact sequence
\[ 0 \longrightarrow M \times U \longrightarrow G \longrightarrow A
\longrightarrow 0, \]
where $M$ is of multiplicative type, $U$ is unipotent,
and $A$ is an abelian variety. Moreover, $U$ is the largest unipotent
subgroup of $G$: the unipotent radical, $R_u(G)$. Also, $M$ is unique 
up to commensurability in $G$, and $A$ is unique up to isogeny.

\item[{\rm (ii)}] 
The formation of the unipotent radical commutes with base change
under field extensions, and yields an exact functor 
\[ R_u : \cC \longrightarrow \cU, \] 
right adjoint to the inclusion $\cU \to \cC$.

\item[{\rm (iii)}] 
The projective objects of $\cC$ are exactly the unipotent groups.

\end{enumerate}

\end{theorem}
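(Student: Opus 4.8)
The plan is to establish the three assertions in turn, the only non-formal inputs being the Chevalley decomposition of Theorem~\ref{thm:che}, the splitting $L = M \times U$ of Theorem~\ref{thm:lin} (available since $k$ is perfect in characteristic $0$), and the orthogonality relations of Proposition~\ref{prop:hom}. For (i), I would start from a linear subgroup $L \subseteq G$ with $G/L =: A$ an abelian variety (Theorem~\ref{thm:che}), and write $L = M \times U$ with $M$ of multiplicative type and $U$ unipotent (Theorem~\ref{thm:lin}); this gives the exact sequence. To identify $U$ with $R_u(G)$, take any unipotent subgroup $V \subseteq G$: its image in $A$ is finite by Proposition~\ref{prop:hom}(ii) and connected (unipotent groups are connected in characteristic $0$), hence trivial, so $V \subseteq L$; projecting to $M$ and applying Proposition~\ref{prop:hom}(i) gives $V \subseteq U$. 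The same computation shows that \emph{every} linear $L'$ with $G/L'$ an abelian variety contains $R_u(G)$, so that $U = R_u(G)$ is a factor common to all such decompositions, and the uniqueness of $M$ up to commensurability follows from that of $L$.

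For (ii), functoriality is immediate since a morphism carries unipotent groups to unipotent groups, and the adjunction $\Hom_\cC(V,G) \cong \Hom_\cU(V,R_u(G))$ (for $V$ unipotent) is just the statement that a morphism out of a unipotent group factors uniquely through the maximal unipotent subgroup. As a right adjoint, $R_u$ is automatically left exact, so the real content is that it carries an epimorphism $\pi \colon G \to G''$ to an epimorphism. I would reduce this to the assertion that if $H$ is connected and surjects onto a unipotent group $V$, then $R_u(H) \to V$ is already onto: one may replace $G$ by $\pi^{-1}(R_u(G''))$ and then by its neutral component (whose unipotent radical still lies in $R_u(G)$), which surjects onto $R_u(G'')$ because the cokernel is at once finite and a quotient of the connected group $R_u(G'')$, hence trivial. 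For the reduced claim, write $0 \to T \times R_u(H) \to H \to B \to 0$ as in (i); the torus $T$ maps to $0$ in $V$ by Proposition~\ref{prop:hom}(i), and were the image $W$ of $R_u(H)$ proper in $V$, the induced map $B \to V/W$ would be a nonzero morphism from an abelian variety to a unipotent group, contradicting Proposition~\ref{prop:hom}(ii). Base change is then handled by observing that $R_u(G)_{k'}$ is unipotent while $(G/R_u(G))_{k'}$ has trivial unipotent radical — the latter checked directly over $k'$ by the argument of (i), as the multiplicative-type and abelian-variety factors keep their type under base change — whence $R_u(G)_{k'} = R_u(G_{k'})$.

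For (iii), one direction is formal: the inclusion $\cU \to \cC$ is left adjoint to the exact functor $R_u$, hence preserves projectives, and every object of $\cU$ is projective because $\cU$ is semisimple; thus every unipotent group is projective in $\cC$. Conversely, let $P$ be projective and write $0 \to M \times U \to P \to A \to 0$. I would first kill $A$: the quotient map $q \colon P \to A$ lifts along the isogeny $n_A$ for every $n$, so $q$ is divisible by every $n$ in $\Hom_\cC(P,A)$; but the latter is finitely generated, being an extension of the finite group $\Hom_\cC(M \times U, A)$ by the free finite-rank module $\End(A)$, so $q = 0$ and $A = 0$. Now $P = M \times U$ is linear, and the same divisibility argument applied to a torus quotient $P \to T$ (where $\Hom_\cC(P,T)$ is again finitely generated) shows that $P$ has no nonzero torus quotient, so $M$ is finite. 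Finally $M$, as a direct factor of $P$, is itself projective; since a nonzero finite group admits a non-split self-extension, this forces $M = 0$, and $P = U$ is unipotent.

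The step I expect to be the main obstacle is the right-exactness of $R_u$ in (ii) — that the unipotent radical of a surjection surjects — since every other part of (i)–(ii) is either formal or a short orthogonality argument, and since it is precisely this surjectivity that makes the adjunction yield the projectivity of unipotent groups in (iii). A secondary delicate point is the non-projectivity of a nonzero finite group, the one place calling for an input outside the structure theory, namely the existence of a non-split self-extension of the corresponding finite Galois module.
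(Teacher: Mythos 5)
Your treatment of (i) and (ii) is correct and follows essentially the same route as the paper: Chevalley's theorem plus the splitting $L = M \times U$, identification of $U$ with $R_u(G)$ via the orthogonality relations of Proposition~\ref{prop:hom}, reduction of the right-exactness of $R_u$ to the vanishing of morphisms from an extension of an abelian variety by a multiplicative-type group to a unipotent group, and the same base-change argument. In (iii), the forward direction (a left adjoint of an exact functor preserves projectives, and $\cU$ is semisimple) is a clean formal repackaging of the paper's explicit lifting argument, and your elimination of $A$ and of the toral part of $M$ via divisibility of $\Hom_\cC(P,-)$ into divisible targets, combined with finite generation of those $\Hom$ groups, is exactly the paper's mechanism.

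The one genuine gap is the final step, where you are left with a nonzero finite multiplicative-type direct factor $M$ of $P$ and appeal to the unproved assertion that ``a nonzero finite group admits a non-split self-extension.'' As a statement about finite Galois modules this is delicate and, for a general profinite (or finite) group, false: take $F = \bF_p^2$ with $Q = \SL_2(\bF_p)$, $p \geq 5$, acting via the standard representation. Then $H^1(Q,\End(F)) = H^1(Q,\mathfrak{sl}_2 \oplus \bF_p) = 0$, while $\Ext^1_{\bZ}(F,F)^Q \cong \bF_p$ is generated by the class whose $d_2$-image in $H^2(Q,\End(F))$ is the obstruction to lifting the standard representation to $\bZ/p^2$ --- which is nonzero, since $\SL_2(\bZ/p^2) \to \SL_2(\bF_p)$ does not split; the low-degree exact sequence of $H^i(Q,\Ext^j_{\bZ}(F,F)) \Rightarrow \Ext^{i+j}_{\bZ[Q]}(F,F)$ then gives $\Ext^1_{\bZ[Q]}(F,F) = 0$. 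So any proof of your claim would have to invoke special features of absolute Galois groups (e.g.\ non-vanishing of $H^1(\Gamma_k,\End(F))$), and none is given. The repair is immediate and stays inside the toolkit you already used: embed the finite group $M$ in a torus $T$; then $\Hom_\cC(M,T)$ is divisible (by projectivity of $M$ and divisibility of $T$) and is killed by the order of $M$, hence zero, which contradicts the nonzero inclusion $M \hookrightarrow T$ unless $M = 0$. This is precisely how the paper concludes.
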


\begin{proof}
(i) Theorem \ref{thm:che} yields an exact sequence
\[ 0 \longrightarrow L \longrightarrow G \longrightarrow A
\longrightarrow 0, \]
where $L$ is linear (possibly non-connected), and $A$ is an
abelian variety. By Theorem \ref{thm:lin}, we have $L = M \times U$, 
where $M$ is of multiplicative type and $U$ is unipotent.

Since $M$ and $A$ have no non-trivial unipotent subgroups, we have
$U = R_u(G)$. Given another exact sequence
\[ 0 \longrightarrow M' \times U \longrightarrow G \longrightarrow A'
\longrightarrow 0 \]
satisfying the same assumptions, the image of $M'$ in 
$A \cong G/(M \times U)$ is finite by Proposition \ref{prop:hom}. 
In other words, the quotient $M'/M' \cap (M \times U)$ is finite. 
Likewise, $M/M \cap (M' \times U)$ is finite as well. Since 
$(M \times U) \cap M' = M \cap M' = M \cap (M' \times U)$,
we see that $M,M'$ are commensurable in $G$. Then 
$A = G/(M \times U)$ and $A' = G/(M' \times U)$ are both quotients
of $G/(M \cap M') \times U$ by finite subgroups, and hence are isogenous. 

(ii) In view of (i), $G/R_u(G)$ is an extension of an abelian variety
by a group of multiplicative type. Since these two classes of
algebraic groups are stable under base change by any field 
extension $k'$ of $k$, it follows that $(G/R_u(G))_{k'}$ has zero
unipotent radical. Thus, $R_u(G)_{k'} = R_u(G_{k'})$. 

Next, note that every morphism $f: G \to H$ sends $R_u(G)$ 
to $R_u(H)$. Consider an exact sequence 
\[ 0 \longrightarrow G_1 \stackrel{f}{\longrightarrow} G_2 
\stackrel{g}{\longrightarrow} G_3 \longrightarrow 0 \] 
and the induced complex
\[ 0 \longrightarrow R_u(G_1) \stackrel{R_u(f)}{\longrightarrow}
R_u(G_2) \stackrel{R_u(g)}{\longrightarrow} R_u(G_3) 
\longrightarrow 0. \]
Clearly, $R_u(f)$ is a monomorphism. Also, we have
$\Ker(R_u(g)) = R_u(G_2) \cap \Ker(g) = R_u(G_2) \cap \Im(f) 
= \Im(R_u(f))$. 
We now show that $R_u(g)$ is an epimorphism. For this, we may
replace $G_2$ with $g^{-1}(R_u(G_3))$, and hence assume that $G_3$
is unipotent. Next, we may replace $G_2$ with $G_2/R_u(G_2)$,
and hence assume (in view of (i) again) that $G_2$ is an extension 
of an abelian variety by a group of multiplicative type. Then 
$\Hom(G_2,G_3) = 0$ by Proposition \ref{prop:hom}; this completes 
the proof of the exactness assertion.

The assertion about adjointness follows from the fact that every 
morphism $U \to H$, where $U$ is unipotent and $H$ arbitrary, 
factors through a unique morphism $U \to R_u(H)$.

(iii) Consider an epimorphism $\varphi : G \to H$, a unipotent group 
$U$, and a morphism $\psi: U \to H$. Then $\psi$ factors through
$R_u(H)$. Also, by (ii), $\varphi$ restricts to an epimorphism
$R_u(G) \to R_u(H)$, which admits a section as unipotent groups
are just vector spaces. Thus, $\psi$ lifts to a morphism $U \to G$.
This shows that $U$ is projective in $\cC$.

Conversely, let $G$ be a projective object in $\cC$. We claim
that the (abstract) group $\Hom_{\cC}(G,H)$ is divisible for any
divisible algebraic group $H$. Indeed, the exact sequence
\[ 0 \longrightarrow H[n] \longrightarrow H
\stackrel{n_H}{\longrightarrow} H \longrightarrow 0 \]
yields an exact sequence 
\[ 0 \longrightarrow \Hom_\cC(G,H[n]) \longrightarrow \Hom_\cC(G,H)  
\stackrel{\times n}{\longrightarrow} \Hom_\cC(G,H) \longrightarrow 0, \]
for any positive integer $n$.

Next, the exact sequence $0 \to L \to G \to A \to 0$ yields 
an exact sequence
\[ 0 \longrightarrow \End_\cC(A) \longrightarrow \Hom_\cC(G,A)  
\longrightarrow \Hom_\cC(L,A), \]
where the abelian group $\End_\cC(A)$ is free of finite rank
(see \cite[Thm.~12.5]{Milne86}),
and $\Hom_\cC(L,A)$ is killed by some positive integer
(Proposition \ref{prop:hom}). On the other hand, $\Hom_{\cC}(G,A)$
is divisible by the above claim.
It follows that $\End_\cC(A)$ is zero, and hence so is $A$.
Thus, $G$ is linear, and hence $G = M \times U$ as above.
Since $U$ is projective, so is $M$. Choose a torus $T$ containing
$M$; then again, the group $\Hom_\cC(M,T)$ is finitely generated and
divisible, hence zero. Thus, $T = 0$ and $G = U$.
\end{proof}

\begin{remark}\label{rem:zero}
With the notation of the above theorem, we have a natural map
\[ G \longrightarrow G/M \times_A G/U, \] 
which is a morphism of $M \times U$-torsors over $A$, and hence 
an isomorphism. Moreover, $G/M$ is an extension of an abelian 
variety by a unipotent group; such `vector extensions' will be
studied in detail in \S \ref{subsec:veav}. Also, $G/U$ is an
extension of an abelian variety by a group of multiplicative
type, and hence of a semi-abelian variety by a finite group.
The semi-abelian varieties will be considered in \S \ref{subsec:sav}.
\end{remark}

\subsection{Positive characteristics}
\label{subsec:pc}

In this subsection, we assume that $\car(k) = p > 0$. Then 
the assertions of Theorem \ref{thm:zero} are no longer valid.
For example, the formation of the unipotent radical (the largest
smooth connected unipotent subgroup) is not exact, and does not
commute with arbitrary field extensions either (see Remark 
\ref{rem:pos} (i) for details). Also, $\cC$ has no non-zero projective
objects, as will be shown in Corollary \ref{cor:proj}. Yet Theorem
\ref{thm:zero} has a useful analogue, in which the unipotent radical
is replaced by the largest unipotent quotient:

\begin{theorem}\label{thm:pos}
Let $G$ be an algebraic group.

\begin{enumerate}

\item[{\rm (i)}] 
$G$ has a smallest subgroup $H$ such that $U := G/H$ is unipotent.
Moreover, $H$ is an extension of an abelian variety $A$ by a group 
of multiplicative type $M$. Also, $M$ is unique up to 
commensurability in $G$, and $A$ is unique up to isogeny.

\item[{\rm (ii)}] Every morphism $H \to U$ is zero; every
morphism $U \to H$ has finite image.

\item[{\rm (iii)}] The formation of $U$ commutes with base
change under field extensions, and yields a functor
\[ U: \cC \longrightarrow \cU, \]
which is left adjoint to the inclusion of $\cU$ in $\cC$.
Moreover, every exact sequence in $\cC$
\[ 0 \longrightarrow G_1 
\stackrel{f}{\longrightarrow} G_2 
\stackrel{g}{\longrightarrow} G_3
\longrightarrow 0 \]
yields a right exact sequence
\[ 0 \longrightarrow U(G_1) 
\stackrel{U(f)}{\longrightarrow} U(G_2) 
\stackrel{U(g)}{\longrightarrow} U(G_3)
\longrightarrow 0, \]
where $\Ker(U(f))$ is finite.

\item[{\rm (iv)}] There exists a subgroup $V \subset G$ such that 
$G = H + V$ and $H \cap V$ is a finite $p$-group.

\end{enumerate}

\end{theorem}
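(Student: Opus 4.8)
The plan is to produce $V$ explicitly as a torsion subgroup of $G$, exploiting that $U$ is a $p$-group in characteristic $p$. Since $U$ is unipotent, there is an integer $n$ with $p^n_U = 0$, so that $U[p^n] = U$ and the cokernel of $p^n_U$ is again $U$. I would then apply the snake lemma to the multiplication-by-$p^n$ endomorphism of the exact sequence $0 \to H \to G \to U \to 0$ furnished by (i), obtaining an exact sequence
\[ 0 \longrightarrow H[p^n] \longrightarrow G[p^n] \longrightarrow U
\stackrel{\delta}{\longrightarrow} H/p^n H \longrightarrow G/p^n G
\longrightarrow U \longrightarrow 0, \]
in which the connecting map $\delta : U \to H/p^n H$ is the only term needing control.

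The key observation is that $\delta$ vanishes for orthogonality reasons. Indeed, applying the same device to the sequence $0 \to M \to H \to A \to 0$ and using that the abelian variety $A$ is divisible, so that $A/p^n A = 0$, I obtain a surjection $M/p^n M \longrightarrow H/p^n H$. As $M$ is of multiplicative type, so is its quotient $M/p^n M$, and hence so is $H/p^n H$. Therefore $\delta$ is a homomorphism from the unipotent group $U$ to a group of multiplicative type, and Proposition \ref{prop:hom}(i) forces $\delta = 0$.

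With $\delta = 0$, the above sequence shows that the natural map $G[p^n] \to U$ is an epimorphism. Setting $V := G[p^n]$, surjectivity of the composite $V \hookrightarrow G \to G/H = U$ means precisely that $G = H + V$. Finally, $H \cap V = H[p^n]$ fits into an exact sequence $0 \to M[p^n] \to H[p^n] \to A[p^n]$ with $M[p^n]$ and $A[p^n]$ both finite, so $H \cap V$ is finite; being killed by $p^n$, it is a finite $p$-group, as required.

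I do not foresee a serious obstacle: the single point demanding care is the identification of $H/p^n H$ as a group of multiplicative type, via the divisibility of $A$, since that is exactly what triggers the orthogonality vanishing of $\delta$. Everything else is a formal consequence of the snake lemma together with the structure of $H$ already established in (i).
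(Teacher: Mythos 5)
Your proposal addresses only part (iv) of the theorem. Parts (i)--(iii) --- the existence of the smallest subgroup $H$ with unipotent quotient, its structure as an extension of an abelian variety by a group of multiplicative type, the orthogonality statements, and the functoriality and right-exactness of $G \mapsto U(G)$ --- are not proved; indeed you explicitly take the exact sequence $0 \to H \to G \to U \to 0$ ``furnished by (i)'' as an input. Part (i) is the substantial part of the statement (the paper derives it from the affinization theorem, Theorem \ref{thm:lin}, and Lemma \ref{lem:fin}), so relative to the full theorem this is a genuine gap, not a stylistic omission.

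For part (iv) itself, your argument is correct and is genuinely different from the paper's. You choose $n$ with $p^n_U = 0$, apply the snake lemma to $p^n$ acting on $0 \to H \to G \to U \to 0$, and kill the connecting map $\delta : U \to H/p^nH$ by observing that $H/p^nH$ is a quotient of $M/p^nM$ (since $A/p^nA = 0$ by divisibility of $A$), hence of multiplicative type, so that $\delta = 0$ by Proposition \ref{prop:hom}(i); this makes $G[p^n] \to U$ an epimorphism and $V := G[p^n]$ works, with $H \cap V = H[p^n]$ finite and killed by $p^n$. All steps are valid: the snake lemma holds in the abelian category $\cC$, unipotent groups in characteristic $p$ are killed by a power of $p$, and $M[p^n]$, $A[p^n]$ are finite. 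The paper instead stabilizes the increasing chain $\Ker(p^n_G)$, passes to $\overline{G} = G/\Ker(p^m_G)$ where $p$ becomes an isogeny, shows $\overline{U}$ is a finite $p$-group, and lifts it to a finite $p$-subgroup $F$ via Lemma \ref{lem:fin}, taking $V = \Ker(p^m_G) + F$. Your route is shorter and shows the slightly sharper fact that the $p^n$-torsion subgroup alone already surjects onto $U$, with no auxiliary finite subgroup needed; the price is that it leans on the identification of $H/p^nH$ as being of multiplicative type, which you correctly flag as the one point requiring care.
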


\begin{proof}
(i) Since the underlying topological space of $G$ is noetherian,
we may choose a subgroup $H \subset G$ such that $G/H$ is
unipotent, and $H$ is minimal for this property. Let $H' \subset G$
be another subgroup such that $G/H'$ is unipotent. Then so is
$G/H \cap H'$ in view of the exact sequence
\[ 0 \longrightarrow  G/H \cap H' \longrightarrow G/H \times G/H'. \]
By minimality of $H$, it follows that $H \cap H' = H$, i.e.,
$H \subset H'$. Thus, $H$ is the smallest subgroup with unipotent
quotient.

Since the class of unipotent groups is stable under extensions, 
every unipotent quotient of $H$ is zero. Also, by the 
affinization theorem (see \cite[Thm.~1, Prop.~5.5.1]{Brion-II}), 
$H$ is an extension of a linear algebraic group $L$ by a semi-abelian 
variety $S$. Then every unipotent quotient of $L$ is zero,
and hence $L$ must be of multiplicative type in view of 
Theorem \ref{thm:lin}. By \cite[Cor.~IV.1.3.9]{DG},
the reduced neutral component $L^0_{\red}$ is its maximal torus,
$T$; the quotient $L/T$ is a finite group of multiplicative 
type. Denote by $S'$ the preimage of $T$ in $H$; then $S'$
is a semi-abelian variety (extension of $T$ by $S$) and we have 
an exact sequence 
\[ 0 \longrightarrow S' \longrightarrow H \longrightarrow 
L/T \longrightarrow 0. \]
By Lemma \ref{lem:fin}, there exists a finite subgroup 
$F \subset H$ such that $H = S' + F$; equivalently, 
the quotient map $H \to L/T$ restricts to an epimorphism 
$F \to L/T$. Also, by Theorem \ref{thm:lin} again, 
$F$ has a largest subgroup of multiplicative type, $M_F$, 
and the quotient $F/M_F$ is unipotent. Since $L/T$ is
of multiplicative type, it follows that the composition 
$M_F \to F \to L/T$ is an epimorphism as well.
Thus, we may replace $F$ with $M_F$, and assume that 
$F$ is of multiplicative type. Let $T'$ be the maximal 
torus of the semi-abelian variety $S'$, and $M := T' + F$. 
Then $M$ is of multiplicative type; moreover, $H/M$ is 
a quotient of $S'/T'$, and hence is an abelian variety. 
The uniqueness assertions may be checked as in the proof of 
Theorem \ref{thm:zero}.

(ii) This follows readily from Proposition \ref{prop:hom}.

(iii) The assertion on base change under field extensions
follows from the stability of the classes of unipotent groups,
abelian varieties, and groups of multiplicative type, under
such base changes. The adjointness assertion may be checked
as in the proof of Theorem \ref{thm:zero} (ii).

Next, consider an exact sequence as in the statement. Clearly,
$U(g)$ is an epimorphism. Also, 
$\Ker(U(g))/\Im(U(f)) \cong g^{-1}(H_3)/(u(G_1) + H_2)$,
where $H_i$ denotes the kernel of the quotient map 
$G_i \to U(G_i)$. Thus, $\Ker(U(g))/\Im(U(f))$ is a quotient
of $g^{-1}(H_3)/u(G_1) \cong H_3$. As $\Ker(U(g))/\Im(U(f))$
is unipotent, it is trivial by (ii). Finally,
$\Ker(U(f)) \cong (G_1 \cap u^{-1}(H_2))/H_1$ is isomorphic
to a subgroup of $H_2/u(H_1)$. Moreover, $H_2/u(H_1)$ is an
extension of an abelian variety by a group of multiplicative
type. Since $\Ker(U(f))$ is unipotent, it is finite by (ii) 
again. 

(iv) Consider the subgroups $\Ker(p^n_G) \subset G$, where $n$ 
is a positive integer. Since they form a decreasing sequence,
there exists a positive integer $m$ such that 
$\Ker(p^n_G) = \Ker(p^m_G)$
for all $n \geq m$. Let $\overline{G} := G/\Ker(p^m_G)$,
then $\Ker(p_{\overline{G}}) = \Ker(p^{m + 1}_G)/\Ker(p^m_G)$
is zero, and hence $p_{\overline{G}}$ is an isogeny. Next, 
let $H \subset G$ be as in (i) and put 
$\overline{H} := H/\Ker(p^m_H)$,
$\overline{U} = \overline{G}/\overline{H}$. Then $\overline{U}$ 
is unipotent (as a quotient of $U$) and $p_{\overline{U}}$ has 
finite cokernel (since this holds for $p_{\overline{G}}$). Thus, 
$\overline{U}$ is a finite $p$-group. By Lemma \ref{lem:fin}, 
there exists a finite $p$-subgroup $F \subset G$ such that 
$\overline{G} = \overline{H} + \overline{F}$
with an obvious notation. Thus, $G = H + V$, where 
$V := \Ker(p^m_G) + F$. Also,  
$H \cap \Ker(p^m_G) = \Ker(p^m_H)$ is finite, since $H$
is an extension of $A$ by $M$, and $\Ker(p^m_A)$ and 
$\Ker(p^m_M)$ are finite. As $F$ is finite, it follows that 
$H \cap V$ is finite as well. Moreover, $V$ is a $p$-group,
since so are $F$ and $\Ker(p^m_G)$; we conclude that $H \cap V$ 
is a finite $p$-group.
\end{proof}

\begin{remarks}\label{rem:pos}
(i) The formation of the unipotent radical does not commute with
purely inseparable field extensions, in view of 
\cite[XVII.C.5]{SGA3}. This formation is not exact either, as seen
e.g. from the exact sequence
\[ 0 \longrightarrow \alpha_p \longrightarrow \bG_a
\stackrel{F}{\longrightarrow} \bG_a
\longrightarrow 0, \]
where $F$ denotes the relative Frobenius endomorphism. 

When $k$ is perfect, one may show that every exact sequence
$0 \to G_1 \to G_2 \to G_3 \to 0$ in $\cC$ yields a complex
$0 \to R_u(G_1) \to R_u(G_2) \to R_u(G_3) \to 0$ with finite
homology groups. But this fails when $k$ is imperfect; more
specifically, choose a finite purely inseparable field extension
$K$ of $k$ of degree $p$, and consider $G := \R_{K/k}(\bG_{m,K})$,
where $\R_{K/k}$ denotes the Weil restriction. By 
\cite[Prop.~A.5.11]{CGP}, $G$ is smooth, connected, and lies in 
an exact sequence
\[ 0 \longrightarrow \bG_m \longrightarrow G
\longrightarrow U\longrightarrow 0, \]
where $U$ is unipotent of dimension $p - 1$. Moreover, every morphism 
from a smooth connected unipotent group to $G$ is constant, as follows 
from the adjointness property of the Weil restriction (see
\cite[(A.5.1)]{CGP}). In other terms, $R_u(G) = 0$.

(ii) The functor $U$ of Theorem \ref{thm:pos} is not left exact, 
as seen from the exact sequence 
\[ 0 \longrightarrow \alpha_p \longrightarrow E 
\stackrel{F}{\longrightarrow} E^{(p)}
\longrightarrow 0, \]
where $E$ denotes a supersingular elliptic curve, and $F$ 
its relative Frobenius morphism. Also, note that the torsion
subgroups $E[p^n]$, where $n \geq 1$, form a strictly increasing 
sequence of infinitesimal unipotent groups; in particular, 
$E$ has no largest connected unipotent subgroup.
\end{remarks}

\begin{corollary}\label{cor:pos}
Let $G$ be an algebraic group.

\begin{enumerate}

\item[{\rm (i)}]
There exists a finite subgroup $F \subset G$ such that
$G/F \cong S \times U$, where $S$ is a semi-abelian variety,
and $U$ a split unipotent group. Moreover, $S$ and $U$ are 
unique up to isogeny.

\item[{\rm (ii)}] If $k$ is locally finite, then we may choose 
$F$ so that $S \cong T \times A$, where $T$ is a torus, and 
$A$ an abelian variety. Moreover, $T$ and $A$ are unique up
to isogeny.

\end{enumerate}

\end{corollary}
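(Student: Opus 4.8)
The plan is to derive both parts from Theorem \ref{thm:pos} by combining the structure of $H$ with the splitting $G = H + V$, and then reducing everything modulo a suitable finite subgroup. First I would invoke Theorem \ref{thm:pos} (i) and (iv) to obtain the smallest subgroup $H \subset G$ with unipotent quotient $U := G/H$, together with a subgroup $V \subset G$ such that $G = H + V$ and $H \cap V$ is a finite $p$-group. Recall that $H$ is an extension of an abelian variety $A$ by a group of multiplicative type $M$. The key point is that the quotient map $V \to G/H = U$ has finite kernel $H \cap V$, so $V$ is an extension of the unipotent group $U$ by a finite group; by Proposition \ref{prop:sc} (ii) we may pass to a finite subgroup of $V$ so that the image of $V$ becomes split unipotent. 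Dually, $G/V \cong H/(H \cap V)$ is, up to the finite $p$-group $H \cap V$, the extension $H$ of $A$ by $M$, hence isogenous to a semi-abelian variety once we absorb the finite part of $M$ (its maximal torus $T'$ has finite index in $M$).

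Concretely, for part (i) I would argue as follows. Set $S := G/V$; by the previous paragraph and Theorem \ref{thm:pos} (i), $S$ is an extension of an abelian variety by a group of multiplicative type, hence contains a semi-abelian subgroup of finite index, so $S$ is isogenous to a semi-abelian variety. The two quotient maps $G \to G/V = S$ and $G \to G/H = U$ assemble into a morphism
\[
G \longrightarrow S \times U,
\]
whose kernel is $H \cap V$, a finite $p$-group, and whose image has finite index because $G = H + V$. Thus this morphism is an isogeny. Replacing $S$ and $U$ by isogenous groups (a semi-abelian variety and, via Proposition \ref{prop:sc} (ii), a split unipotent group) and pulling the relevant finite kernels back into a single finite subgroup $F \subset G$, I obtain $G/F \cong S \times U$ of the required form. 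The uniqueness up to isogeny of $S$ and $U$ follows from Theorem \ref{thm:pos}: $U$ is the largest unipotent quotient (determined up to isogeny by Theorem \ref{thm:pos} (iii)), and $S$ recovers $H$ up to isogeny, whose multiplicative and abelian-variety parts are unique up to commensurability and isogeny respectively.

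For part (ii), I would use the hypothesis that $k$ is locally finite to split the semi-abelian variety $S$ up to isogeny. Over a locally finite field, the relevant $\Ext$ group between an abelian variety and a torus is torsion: indeed, by the Weil–Barsotti isomorphism (as invoked for Theorem \ref{thm:all} (iii)) extensions of an abelian variety $A$ by a torus $T$ are classified by a group built from $\widehat{T} \otimes A^\vee(k)$-type data, and over a locally finite field the group of rational points $A^\vee(k)$ is torsion. Hence every such extension is isogenous to the trivial one, giving an isogeny $S \sim T \times A$; absorbing the finite kernel into $F$ yields $G/F \cong T \times A \times U$ with the stated uniqueness. The main obstacle I anticipate is \emph{not} the abstract isogeny decomposition but the bookkeeping of finite subgroups: ensuring that all the separate finite kernels (the $p$-group $H \cap V$, the finite index of $T'$ in $M$, the finite subgroup needed to split $U$, and the kernel of the splitting of $S$ in part (ii)) can be simultaneously absorbed into a single finite $F \subset G$ with $G/F$ of exactly the asserted product form. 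This requires repeatedly applying Lemma \ref{lem:fin} to lift finite quotients to finite subgroups of $G$ itself, and checking that the successive replacements remain compatible.
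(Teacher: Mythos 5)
Your proof is correct. For part (i) you follow essentially the same route as the paper: Theorem \ref{thm:pos} (i) and (iv) supply $H$ and $V$, the isomorphism $G/(H \cap V) \cong G/V \times G/H$ gives the product decomposition, Proposition \ref{prop:sc} (ii) splits the unipotent factor, and a finite quotient turns the multiplicative part of $H$ into a torus. The only difference of detail is that the paper reaches the semi-abelian factor via $H^0_{\red}$ (a semi-abelian variety by \cite[Lem.~5.6.1]{Brion-II}) together with Lemma \ref{lem:fin}, whereas you write $M = T' + F_0$ with $F_0$ finite and quotient by $F_0$; both work, but your phrase ``contains a semi-abelian subgroup of finite index'' should read ``admits a semi-abelian quotient by a finite subgroup'' --- the subgroup version is exactly what needs the cited lemma over an imperfect field, while the quotient version is immediate from Lemma \ref{lem:fin} and is what your construction actually uses. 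For part (ii) you take a genuinely different route: the paper invokes \cite[Cor.~5.5.5]{Brion-II} to produce an abelian subvariety $A \subset S$ with $S = T + A$ and then quotients by $T \cap A$, whereas you use the Weil--Barsotti description of $\Ext^1_{\cC}(A,T)$ as $\Hom^{\Gamma}(X(T),\widehat{A}(k_s))$ and the fact that $\widehat{A}(k_s)$ is torsion over a locally finite field: since $X(T)$ is finitely generated, the classifying map has finite image, so the extension class is killed by some $n$ and $S/T[n] \cong T \times A$. This is precisely the alternative the introduction alludes to for Theorem \ref{thm:all} (iii); it trades the structure result Cor.~5.5.5 for the Weil--Barsotti machinery plus the elementary observation that every $k_s$-point of an abelian variety over a locally finite field is torsion. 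The final bookkeeping of absorbing the successive finite kernels into one finite subgroup $F \subset G$, which you rightly identify as the only delicate point, goes through exactly as you describe.
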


\begin{proof}
(i) With the notation of Theorem \ref{thm:pos}, we have isomorphisms
\[ G/H \cap V \cong G/V \times G/H \cong (H/H \cap V) \times U. \]
Also, $H/H \cap V$ is an extension of an abelian variety,
$H/(H \cap V) + M$, by a group of multiplicative type,
$M/M \cap V$. Moreover, $U$ is an extension of a split
unipotent group by a finite group (Proposition \ref{prop:sc}).
Thus, we may assume that $G = H$. Then $G^0_\red$ is a semi-abelian
variety, as follows from \cite[Lem.~5.6.1]{Brion-II}. Since
$G/G^0_\red$ is finite, applying Lemma \ref{lem:fin} yields that
$G$ is an extension of a semi-abelian variety by a finite group. 

(ii) By \cite[Cor.~5.5.5]{Brion-II}, there exists an abelian
subvariety $A \subset S$ such that $S = T + A$, where $T \subset S$
denotes the maximal torus. Then $T \cap A$ is finite, and 
$S/T \cap A \cong T/T \cap A \times A/T \cap A$. 

This completes the proof of the existence assertions in (i) and (ii). 
The uniqueness up to isogeny follows from Proposition \ref{prop:hom}.
\end{proof}

\section{The isogeny category of algebraic groups}
\label{sec:ic}

\subsection{Definition and first properties}
\label{subsec:dfp}

Recall that $\cC$ denotes the category of commutative algebraic
groups, and $\cF$ the full subcategory of finite groups. Since $\cF$
is stable under taking subobjects, quotients and extensions, we may 
form the quotient category $\cC/\cF$; it has the same objects as 
$\cC$, and its morphisms are defined by  
\[ \Hom_{\cC/\cF}(G,H) = \lim_{\to} \Hom_\cC(G',H/H'), \]
where the direct limit is taken over all subgroups $G' \subset G$
such that $G/G'$ is finite, and all finite subgroups $H' \subset H$.
The category $\cC/\cF$ is abelian, and comes with an exact functor
\[ Q : \cC \longrightarrow \cC/\cF, \] 
which is the identity on objects and the natural map 
\[ \Hom_\cC(G,H) \longrightarrow \lim_{\to} \Hom_\cC(G',H/H'),
\quad f \longmapsto \uf \]
on morphisms. The quotient functor $Q$ satisfies the following 
universal property:
given an exact functor $R : \cC \to \cD$, where $\cD$ is an abelian
category, such that $R(F) = 0$ for any finite group $F$, there exists
a unique exact functor $S : \cC/\cF \to \cD$ such that $R = S \circ Q$
(see \cite[Cor.~III.1.2, Cor.~III.1.3]{Gabriel} for these results).

Alternatively, $\cC/\cF$ may be viewed as the localization of $\cC$
at the multiplicative system of isogenies (see  \cite[\S I.2]{GZ} or
\cite[\S 4.26]{SP} for localization of categories); this is easily 
checked by arguing as in the proof of \cite[Lem.~12.9.6]{SP}.

We now show that $\cC/\cF$ is equivalent to a category with 
somewhat simpler objects and morphisms:

\begin{lemma}\label{lem:equi}
Let $\ucC$ be the full subcategory of $\cC/\cF$ with objects 
the smooth connected algebraic groups.

\begin{enumerate}

\item[{\rm (i)}] 
The inclusion of $\ucC$ in $\cC/\cF$ is an equivalence of
categories.

\item[{\rm (ii)}]
$\Hom_{\ucC}(G,H) = \lim \Hom_\cC(G,H/H')$,
where the direct limit is taken over all finite subgroups 
$H' \subset H$.

\item[{\rm (iii)}]  
Let $\uf \in \Hom_{\ucC}(G,H)$ be represented by a morphism
$f : G \to H/H'$ in $\cC$. Then $\uf$ is zero (resp.~a monomorphism, 
an epimorphism, an isomorphism) if and only if $f$ is zero 
(resp.~has a finite kernel, is an epimorphism, is an isogeny).

\end{enumerate}

\end{lemma}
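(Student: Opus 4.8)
The plan is to establish the three parts in order, using Proposition~\ref{prop:sc}, Lemma~\ref{lem:epi}(i), and the exactness of the quotient functor $Q$ together with its defining property. For (i), since $\ucC$ is a \emph{full} subcategory of $\cC/\cF$, the inclusion is automatically fully faithful, so only essential surjectivity needs proof. Given an arbitrary algebraic group $G$, Proposition~\ref{prop:sc}(i) provides a finite subgroup $F \subset G$ with $G/F$ smooth and connected. The quotient map $G \to G/F$ has finite kernel $F$ and is surjective, hence is an isogeny, so it becomes an isomorphism under $Q$. Thus every object of $\cC/\cF$ is isomorphic to the object $G/F$ of $\ucC$, and the inclusion is an equivalence.

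For (ii), I would start from the general description $\Hom_{\cC/\cF}(G,H) = \lim_{\to} \Hom_\cC(G',H/H')$, the limit running over subgroups $G' \subset G$ with $G/G'$ finite and over finite subgroups $H' \subset H$. Because $G$ is smooth and connected, Lemma~\ref{lem:epi}(i) forces $G' = G$ for every such $G'$; hence the index set in the $G'$-direction reduces to a single element, the limit collapses in that direction, and one is left with $\lim_{\to} \Hom_\cC(G,H/H')$ over finite $H' \subset H$, which is the asserted formula.

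For (iii), fix a representative $f : G \to H/H'$ and write $\pi : H \to H/H'$ for the quotient map. Since $\pi$ is an isogeny, $Q(\pi)$ is an isomorphism and $\uf = Q(\pi)^{-1} \circ Q(f)$; as $Q(\pi)^{-1}$ is invertible, $\uf$ is zero, a monomorphism, an epimorphism, or an isomorphism exactly when $Q(f)$ is. Now I invoke the exactness of $Q$ and the fact that $Q(N) = 0$ if and only if $N$ is finite (see \cite{Gabriel}). For the vanishing case, $\Im f$ is a quotient of the smooth connected group $G$, hence smooth and connected, so $Q(\Im f) = 0$ forces $\Im f$ finite and therefore $\Im f = 0$, i.e. $f = 0$. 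For the monomorphism case, $\Ker(\uf) = Q(\Ker f)$, which vanishes iff $\Ker f$ is finite. For the epimorphism case, $\Coker(\uf) = Q(\Coker f)$ with $\Coker f = (H/H')/\Im f$; here I use that $H$ smooth and connected implies $H/H'$ smooth and connected, so Lemma~\ref{lem:epi}(i) applied to the subgroup $\Im f \subset H/H'$ turns finiteness of $\Coker f$ into the statement $\Im f = H/H'$, that is, $f$ surjective. The isomorphism case is the conjunction of the previous two, giving $\uf$ iso iff $f$ has finite kernel and is surjective, i.e. $f$ is an isogeny.

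The one structural point on which (iii) hinges, and which I expect to be the only place requiring care, is that a quotient of a smooth connected group by a finite subgroup scheme is again smooth and connected: the quotient map is faithfully flat, so geometric reducedness and connectedness pass from $H$ to $H/H'$. This is exactly what lets me apply Lemma~\ref{lem:epi}(i) to $H/H'$ and thereby promote ``finite cokernel'' to ``surjective''; once it is granted, the whole of (iii) reduces to the exactness of $Q$ and the characterization of the objects killed by $Q$.
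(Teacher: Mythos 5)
Your proposal is correct and follows essentially the same route as the paper: Proposition~\ref{prop:sc} for essential surjectivity in (i), Lemma~\ref{lem:epi}(i) to collapse the limit over $G'$ in (ii), and for (iii) the characterization of zero/mono/epi in the quotient category via finiteness of $\Im(f)$, $\Ker(f)$, $\Coker(f)$ (which the paper cites from Gabriel and you rederive from exactness of $Q$), combined with Lemma~\ref{lem:epi}(i) applied to $\Im(f)$ inside the smooth connected group $H/H'$. The extra care you take in checking that $H/H'$ is smooth and connected is exactly the point the paper leaves implicit.
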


\begin{proof}
(i) This follows from Proposition \ref{prop:sc}.

(ii) This follows from Lemma \ref{lem:epi}.

(iii) By \cite[Lem.~III.1.2]{Gabriel}, $\uf$ is zero  
(resp.~a monomorphism, an epimorphism) if and only if 
$\Im(f)$ (resp.~$\Ker(f)$, $\Coker(f)$) is finite.
By Lemma \ref{lem:epi} again, the finiteness of $\Im(f)$ 
is equivalent to $f = 0$, and the finiteness of $\Coker(f)$ 
is equivalent to $f$ being an epimorphism. As a consequence, 
$\uf$ is an isomorphism if and only if $f$ is an isogeny.
\end{proof}

The abelian category $\ucC$ will be called the
\emph{isogeny category of (commutative) algebraic groups}. 
Every exact functor $R : \cC \to \cD$, where $\cD$ is an abelian
category and $R(f)$ is an isomorphism for any isogeny $f$, 
factors uniquely through $\ucC$ (indeed, $R$ must send 
any finite group to zero).

\medskip

We may now prove the assertion (i) of the main theorem:

\begin{proposition}\label{prop:simple}

\begin{enumerate}

\item[{\rm (i)}] The category $\ucC$ is noetherian and artinian. 

\item[{\rm (ii)}] The simple objects of $\ucC$ are exactly 
$\bG_a$, the simple tori, and the simple abelian varieties.

\end{enumerate}

\end{proposition}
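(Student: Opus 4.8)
The plan is to prove both statements using the filtration result of Proposition~\ref{prop:el} together with the description of morphisms in $\ucC$ from Lemma~\ref{lem:equi}. For part (i), the key observation is that Proposition~\ref{prop:el} exhibits every algebraic group $G$ as an iterated extension of finitely many pieces, each of which is either finite or isomorphic to $\bG_a$, a simple torus, or a simple abelian variety. In the quotient category $\ucC$, the finite pieces become zero, so $G$ acquires a finite filtration whose successive quotients are among $\bG_a$, simple tori, and simple abelian varieties. First I would verify that these remaining pieces are indeed simple objects of $\ucC$ (which is part (ii)), and then conclude that $G$ has a composition series of bounded length in $\ucC$. Since every object admits such a finite composition series, the category is both noetherian and artinian; more precisely, I would invoke the Jordan--H\"older formalism, noting that the existence of a finite filtration with simple quotients in an abelian category forces every ascending and descending chain of subobjects to stabilize.

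For part (ii), I must show two things: that $\bG_a$, each simple torus, and each simple abelian variety is a simple object of $\ucC$, and conversely that these exhaust the simple objects. For the first direction, suppose $G$ is one of these groups and $\uf : G' \to G$ is a monomorphism in $\ucC$ with $G'$ nonzero. By Lemma~\ref{lem:equi}(iii), after replacing $G'$ by a smooth connected representative, this corresponds to a morphism $f : G' \to G/G''$ with finite kernel, hence an isogeny onto a nonzero subgroup. For $\bG_a$, a simple torus, or a simple abelian variety, any nonzero subgroup has finite index (in the torus and abelian variety cases, because the group has no proper positive-dimensional subgroup up to isogeny; for $\bG_a$, any nonzero subgroup is cofinite), so $\uf$ is an isomorphism in $\ucC$; this shows $G$ is simple. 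I would be careful to phrase "simple torus" and "simple abelian variety" correctly: a simple torus is one with no nontrivial proper subtorus, equivalently it is isogeny-simple, and similarly for abelian varieties, so the required simplicity in $\ucC$ is essentially built into the definitions.

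For the converse direction, let $G$ be any simple object of $\ucC$. Since $G$ is nonzero, its composition series from part (i) is nonempty, and by simplicity $G$ must itself be isomorphic in $\ucC$ to one of the listed pieces --- that is, the filtration has a single nonzero successive quotient, which is $\bG_a$, a simple torus, or a simple abelian variety. Thus every simple object is of the stated form, completing the classification.

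The main obstacle I anticipate is the careful bookkeeping in transferring the filtration of Proposition~\ref{prop:el} from $\cC$ to $\ucC$: I must ensure that applying the quotient functor $Q$ genuinely produces a filtration with \emph{simple} quotients, discarding the finite steps (which become zero isomorphisms in $\ucC$) and coalescing the remaining steps correctly. A subtle point is that a subquotient isomorphic to $\bG_a$ in $\cC$ could a priori fail to remain nonzero in $\ucC$, but since $\bG_a$ is infinite it is not killed by $Q$, so this does not occur; I would state this explicitly. Once the filtration is correctly interpreted in $\ucC$, both the finiteness of composition length (giving noetherian and artinian) and the classification of simples follow by standard abelian-category arguments, so the essential content lies entirely in Propositions~\ref{prop:el} and the morphism description of Lemma~\ref{lem:equi}.
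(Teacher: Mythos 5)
Your proposal is correct, but your route to part (i) differs from the paper's. The paper proves noetherianity and artinianity directly, without appealing to Proposition~\ref{prop:el} or to simplicity of the building blocks: given an increasing chain of subobjects $(G_n)$ of $G$ in $\ucC$, each inclusion is represented by a $\cC$-morphism $G_n \to G_{n+1}/G''_{n+1}$ with finite kernel, so $\dim(G_n) \leq \dim(G_{n+1}) \leq \dim(G)$; the dimensions stabilize, the connecting morphisms become isogenies, and the chain stabilizes in $\ucC$ by Lemma~\ref{lem:equi}(iii). You instead transfer the filtration of Proposition~\ref{prop:el} to $\ucC$, verify the surviving quotients are simple, and invoke the Jordan--H\"older formalism to bound all chains by the composition length. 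Both arguments are valid; the paper's is more elementary and logically independent of part (ii), whereas yours makes part (i) a consequence of the finite-length structure and so front-loads the work of part (ii). For part (ii) the paper simply writes that the claim follows from Proposition~\ref{prop:el}, and the details you supply --- simplicity of $\bG_a$, simple tori, and simple abelian varieties via Lemma~\ref{lem:equi}(iii) and the fact that any positive-dimensional subobject of a one-dimensional or isogeny-simple group is cofinite, plus the converse via collapsing the filtration --- are exactly the expected filling-in. Your attention to the points that finite steps of the filtration die under $Q$ while $\bG_a$ does not, and that ``simple torus'' means isogeny-simple, is appropriate and does not reveal any gap.
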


\begin{proof}
(i) Let $G$ be a smooth connected algebraic group, and $(G_n)_{n \geq 0}$
an increasing sequence of subobjects of $G$ in $\ucC$, i.e.,
each $G_n$ is smooth, connected, and equipped with a $\cC$-morphism
\[ \varphi_n : G_n \longrightarrow G/G'_n, \]
where $\Ker(\varphi_n)$ and $G'_n$ are finite; moreover, we have 
$\cC$-morphisms
\[ \psi_n : G_n \longrightarrow G_{n+1}/G''_{n+1}, \]
where $\Ker(\psi_n)$ and $G''_{n+1}$ are finite. Thus, 
$\dim(G_n) \leq \dim(G_{n+1}) \leq \dim(G)$. It follows that 
$\dim(G_n) = \dim(G_{n+1})$ for $n \gg 0$, and hence $\psi_n$ 
is an isogeny. So $G_n \cong G_{n+1}$ in $\ucC$ for $n \gg 0$. 
This shows that $\ucC$ is noetherian. One may check likewise that 
$\ucC$ is artinian. 

(ii) This follows from Proposition \ref{prop:el}.
\end{proof}

Next, we relate the short exact sequences in $\cC$ with those
in $\ucC$:

\begin{lemma}\label{lem:exa}
Consider a short exact sequence in $\cC$,
\[ \xi : \quad 0 \longrightarrow G_1 
\stackrel{u}{\longrightarrow} G_2 
\stackrel{v}{\longrightarrow} G_3 
\longrightarrow 0, \]
where $G_1,G_2,G_3$ are smooth and connected.
Then $\xi$ splits in $\ucC$ if and only if the push-out
$f_* \xi$ splits in $\cC$ for some epimorphism
with finite kernel $f : G_1 \to H$.
\end{lemma}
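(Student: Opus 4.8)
The plan is to reduce both implications to the standard fact that, for the extension $\xi$ and any morphism $f : G_1 \to H$ in $\cC$, the push-out $f_* \xi$ splits in $\cC$ if and only if $f$ factors through $u$, i.e.\ there is a morphism $\psi : G_2 \to H$ with $\psi \circ u = f$. I would record this first: writing $f_* \xi$ as $0 \to H \to P \to G_3 \to 0$, the universal property of the push-out identifies the retractions $P \to H$ restricting to $\id_H$ on $H$ with the morphisms $\psi : G_2 \to H$ satisfying $\psi \circ u = f$. (Equivalently, applying $\Hom_\cC(-,H)$ to $\xi$ exhibits $f_*[\xi]$ as the image of $f$ under the connecting map, so it vanishes exactly when $f$ lies in the image of $u^* : \Hom_\cC(G_2,H) \to \Hom_\cC(G_1,H)$.)

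For the ``if'' direction, suppose $f : G_1 \to H$ is an epimorphism with finite kernel and $f_* \xi$ splits in $\cC$, so that by the reformulation there is $\psi : G_2 \to H$ with $\psi \circ u = f$. As $f$ is an isogeny, $\uf = Q(f)$ is an isomorphism in $\ucC$ by Lemma \ref{lem:equi}(iii). Then $\underline{r} := \uf^{-1} \circ \upsi : G_2 \to G_1$ satisfies $\underline{r} \circ \uu = \uf^{-1} \circ \upsi \circ \uu = \uf^{-1} \circ \uf = \id_{G_1}$, so $\underline{r}$ is a retraction of $\xi$ in $\ucC$ and $\xi$ splits there.

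The ``only if'' direction is the substantive one. Assuming $\xi$ splits in $\ucC$, I would choose a retraction $\underline{r} : G_2 \to G_1$ with $\underline{r} \circ \uu = \id_{G_1}$ and, using Lemma \ref{lem:equi}(ii), represent it by an honest $\cC$-morphism $r : G_2 \to G_1/G_1''$ for some finite subgroup $G_1'' \subset G_1$; write $q : G_1 \to G_1/G_1''$ for the quotient map. The crux is to unwind the identity $\underline{r} \circ \uu = \id_{G_1}$ inside the defining direct limit $\Hom_{\ucC}(G_1, G_1/G_1'') = \lim_{\to} \Hom_\cC(G_1, (G_1/G_1'')/N)$: the composite $r \circ u$ represents $\underline{r} \circ \uu$ while $q$ represents $\id_{G_1}$, so these two elements of $\Hom_\cC(G_1, G_1/G_1'')$ must already agree after passage to some further finite quotient. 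Concretely, there is a finite subgroup $N \subset G_1/G_1''$ such that $r \circ u - q$ has image contained in $N$.

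To conclude, I would absorb this discrepancy into the target. Set $H := (G_1/G_1'')/N$ with quotient map $\pi : G_1/G_1'' \to H$, and put $f := \pi \circ q : G_1 \to H$ and $\psi := \pi \circ r : G_2 \to H$. Since $N$ and $G_1''$ are finite, $f$ is an epimorphism with finite kernel; moreover $\psi \circ u = \pi \circ (r \circ u) = \pi \circ q = f$, the middle equality holding because $r \circ u - q$ takes values in $N = \Ker(\pi)$. By the reformulation, $f_* \xi$ splits in $\cC$, as required. The main obstacle is exactly this middle step: converting a splitting that a priori lives only in a direct limit of $\Hom$-groups into an equality of genuine $\cC$-morphisms after a single further finite quotient, and arranging that quotient on the target so that it is realized by a push-out along an isogeny of $G_1$.
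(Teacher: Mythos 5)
Your proof is correct and follows essentially the same route as the paper's: both reduce the statement to the equivalence between splitting of $f_*\xi$ and factoring $f$ through $u$, and both convert a $\ucC$-retraction into an honest $\cC$-morphism $g \circ u = f$ by enlarging the finite subgroup of $G_1$ being quotiented out. Your explicit treatment of the direct-limit step (introducing the further quotient by $N$) just spells out what the paper compresses into its ``equivalently''.
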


\begin{proof}
Recall that $\xi$ splits in $\ucC$ if and only if
there exists a $\ucC$-morphism $\ug: G_2 \to G_1$
such that $\ug \circ u = \id$ in $\ucC$. Equivalently,
there exists a finite subgroup $G'_1 \subset G_1$ and 
a $\cC$-morphism $g: G_2 \to G_1/G'_1$ such that 
$g \circ u$ is the quotient map $f_1 : G_1 \to G_1/G'_1$.

If such a pair $(G'_1,g)$ exists, then $g$ factors through
a morphism $G_2/u(G'_1) \to G_1/G'_1$, which splits the bottom
exact sequence in the push-out diagram
\[ \CD
0 @>>> G_1 @>{u}>> G_2 @>{v}>> G_3 @>>> 0 \\
& & @V{f_1}VV @V{f_2}VV @V{\id}VV \\
0 @>>> G_1/G'_1 @>{u'}>> G_2/u(G'_1) @>{v'}>> G_3 @>>> 0. \\
\endCD \]
Replacing $G'_1$ by a larger finite subgroup, we may assume that 
$G_1/G'_1$ is smooth and connected (Lemma \ref{lem:fin}).

Conversely, a splitting of the bottom exact sequence in the above
diagram is given by a $\cC$-morphism $g' : G_2/u(G'_1) \to G_1/G'_1$ 
such that $g' \circ u' = \id$ in $\cC$. Let $g : G_2 \to G_1/G'_1$ 
denote the composition
$G_2 \stackrel{f_2}{\longrightarrow} G_2/u(G'_1) 
\stackrel{g'}{\longrightarrow} G_1/G'_1$.
Then 
$g \circ u = g' \circ f_2 \circ u = g' \circ u' \circ f_1 = f_1$
as desired.
\end{proof}

We may now construct non-split exact sequences in $\ucC$,
thereby showing that $\hd(\ucC) \geq 1$:

\begin{examples}\label{ex:nonsplit}
(i) Consider an exact sequence
\[ \xi : \quad 0 \longrightarrow \bG_a \longrightarrow G
\longrightarrow A \longrightarrow 0, \]
where $A$ is an abelian variety. Then $\xi$, viewed as an
extension of $A$ by $\bG_a$ in $\cC$, is classified by an element
$\eta \in H^1(A,\cO_A)$ (see \cite{Rosenlicht-II} or 
\cite[\S 1.9]{MM}). 

If $\car(k) = 0$, then every epimorphism with finite kernel 
$f : \bG_a \to H$ may be identified with the multiplication 
by some $t \in k^*$, viewed as an endomorphism of $\bG_a$; 
then the push-out $f_* \xi$ is classified by $t \eta$. 
In view of Lemma \ref{lem:exa}, it follows that $\xi$ 
is non-split in $\ucC$ whenever $\eta \neq 0$.

In contrast, if $\car(k) = p > 0$, then $\xi$ splits in
$\ucC$. Indeed, the multiplication map $p_A$ yields an isomorphism 
in $\ucC$, and $p_{\bG_a} = 0$ whereas 
$(p_{\bG_a})_* \xi = p_A^* \xi$.

(ii) Assume that $\car(k) = p > 0$ and consider the
algebraic group $W_2$ of Witt vectors of length $2$.
This group comes with an exact sequence
\[ \xi : \quad 0 \longrightarrow \bG_a \longrightarrow W_2
\longrightarrow \bG_a \longrightarrow 0, \]
see e.g. \cite[\S V.1.1.6]{DG}. Every epimorphism with finite
kernel $f : \bG_a \to H$ may be identified
with a non-zero endomorphism of $\bG_a$. In view of
\cite[Cor.~V.1.5.2]{DG}, it follows that the push-forward 
$f_*\xi$ is non-split. Thus, $\xi$ does not split in $\ucC$.
\end{examples}

\begin{proposition}\label{prop:exa}
Consider an exact sequence 
\[ 0 \longrightarrow G_1 
\stackrel{\uu_1}{\longrightarrow} G_2
\stackrel{\uu_2}{\longrightarrow}  \cdots
 \stackrel{\uu_{n-1}}{\longrightarrow}
G_n \longrightarrow 0 \] 
in $\ucC$. Then there exists an exact sequence
\[ 0 \longrightarrow H_1 
\stackrel{v_1}{\longrightarrow} H_2 
\stackrel{v_2}{\longrightarrow} \cdots 
\stackrel{v_{n-1}}{\longrightarrow}
H_n \longrightarrow 0 \] 
in $\cC$, and epimorphisms with finite kernels 
$f_i : G_i \to H_i$ ($i = 1, 2, \ldots, n$), such that the diagram
\[ \CD
0 @>>> G_1 @>{\uu_1}>> G_2 @>{\uu_2}>> & \cdots & @>{\uu_{n-1}}>> G_n @>>> 0 \\
& & @V{\uf_1}VV @V{\uf_2}VV  &  & & & @V{\uf_n}VV \\
0 @>>> H_1 @>{\uv_1}>> H_2 @>{\uv_2}>> & \cdots & @>{\uv_{n-1}}>> H_n @>>> 0 \\
\endCD \]
commutes in $\ucC$.
\end{proposition}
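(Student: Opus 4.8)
The plan is to proceed in two stages. First I would replace the given diagram by an honest \emph{complex} in $\cC$ attached to it by isogenies, and then correct that complex to an exact sequence by further isogenies, measuring the discrepancy by the (finite) homology groups. Throughout I will use that the quotient functor $Q : \cC \to \cC/\cF$ is exact, the description of morphisms, monomorphisms and epimorphisms in $\ucC$ from Lemma \ref{lem:equi}, and the following finiteness input drawn from Lemma \ref{lem:epi}: a $\cC$-morphism out of a smooth connected group whose image is finite is zero, since its image is a quotient $G/\Ker(f)$, hence smooth and connected, and a finite smooth connected group is trivial.

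\emph{Stage 1: construction of an honest complex.} Working from the right, I would represent each $\uu_i$ by an honest $\cC$-morphism into a finite quotient of $G_{i+1}$, using Lemma \ref{lem:equi}(ii), and keep all objects smooth and connected by Proposition \ref{prop:sc}. The point is that realising a $\ucC$-morphism with target $H_{i+1}$ as an \emph{honest} $\cC$-morphism into $H_{i+1}$ forces one to pass to a finite quotient of that target; to keep the maps already built to the right of $H_{i+1}$ defined, I propagate this finite quotient downstream, quotienting $H_{i+2},H_{i+3},\dots$ by the successive (finite) images of the chosen subgroup. After these corrections one obtains finite subgroups $K_i\subset G_i$, smooth connected quotients $H_i := G_i/K_i$, quotient isogenies $f_i : G_i \to H_i$, and honest maps $v_i : H_i \to H_{i+1}$ with $\uf_i$ identifying $v_i$ with $\uu_i$ in $\ucC$. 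Each composite $v_{i+1}\circ v_i$ then represents $\uu_{i+1}\circ\uu_i = 0$, so by the finiteness input above it is \emph{honestly} zero; thus $H_\bullet$ is a complex in $\cC$.

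\emph{Stage 2: finiteness of homology and correction.} Since $Q(f_i)$ is an isomorphism for each $i$, applying the exact functor $Q$ to $H_\bullet$ recovers the exact sequence $(G_i,\uu_i)$; hence each homology object $\cH_i := \Ker(v_i)/\Im(v_{i-1})$ satisfies $Q(\cH_i) = \Ker(\uu_i)/\Im(\uu_{i-1}) = 0$, i.e. $\cH_i$ is finite. At the top spot, $\Coker(v_{n-1})$ is a finite quotient of the smooth connected group $H_n$, hence zero by Lemma \ref{lem:epi}, so $v_{n-1}$ is already an epimorphism. I would now remove the finite homology spot by spot, from $i=n-1$ down to $i=2$: at spot $i$, Lemma \ref{lem:fin} gives a finite subgroup $D_i\subset\Ker(v_i)$ with $\Ker(v_i) = \Im(v_{i-1})+D_i$, and replacing $H_i$ by $H_i/D_i$ (an isogeny) makes the sequence exact at $H_i$, as $v_i$ descends with unchanged image while $\Im(v_{i-1})$ now fills all of $\Ker(\bar v_i)$. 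This leaves the already-treated spots to the right untouched and enlarges $\Ker(v_{i-1})$ only by a finite group, so the homology at spot $i-1$ stays finite and is handled at the next step. Finally, quotienting $H_1$ by the finite group $\Ker(v_1)$ makes $v_1$ a monomorphism; composing all the corrections yields the required isogenies and the exact sequence in $\cC$.

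The main obstacle is the bookkeeping of Stage 1: because a $\ucC$-morphism can only be realised honestly after passing to a finite quotient of its target, each such realisation perturbs the portion of the diagram already built, and one must verify that propagating these finite quotients downstream preserves both the identifications $\uf_i$ and the vanishing of consecutive composites (the latter being guaranteed by the finiteness input). Stage 2 is comparatively routine once the homology is known to be finite; the only delicate point there is the order of correction — proceeding from right to left ensures that fixing any one spot disturbs only the still-finite neighbouring spot to its left, so the process terminates after finitely many steps.
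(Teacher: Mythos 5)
Your proof is correct, and it reaches the conclusion by a route that is organized differently from the paper's, although it rests on exactly the same three ingredients: the description of $\ucC$-morphisms and of their vanishing via Lemma \ref{lem:equi}, the fact that a morphism with finite image out of a smooth connected group is zero (Lemma \ref{lem:epi}), and the lifting of finite quotients (Lemma \ref{lem:fin}). The paper proves the cases $n=2,3$ directly --- and its $n=3$ argument is essentially your two-stage scheme in miniature: make $u_2$ an honest epimorphism, make $u_1$ an honest monomorphism, observe that the composite is honestly zero and the middle homology finite, then kill it with Lemma \ref{lem:fin} --- and then handles $n\geq 4$ by cutting the sequence into two shorter ones, applying induction, and gluing; the gluing requires reconciling the two finite quotients of the splice object $K$, which is where the diagrams with $L$, $M$, $F$ and $H_3/v(F)$ come from. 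You instead treat all $n$ uniformly in a single right-to-left sweep: first realize every $\uu_i$ honestly by passing to finite quotients of the targets and propagating those finite quotients downstream, then observe (via exactness of $Q$) that all homology objects of the resulting complex are finite, and remove them spot by spot from right to left. Your two delicate points are handled correctly: the downstream propagation preserves both the classes in $\Hom_{\ucC}$ and the honest vanishing of consecutive composites (the latter because each $v_{i+1}\circ v_i$ is a morphism of smooth connected groups representing $0$ in $\ucC$), and in the correction stage replacing $H_i$ by $H_i/D_i$ leaves $\Im(v_i)$ unchanged, achieves $\Ker(\bar v_i)=\Im(\bar v_{i-1})$, and enlarges $\Ker(v_{i-1})$ only by a finite amount, so the process terminates. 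What your organization buys is the elimination of the induction and of the gluing diagrams; what it costs is the more elaborate bookkeeping of Stage 1, which the paper's induction hides inside the recursive call. One micro-remark: you do not need Proposition \ref{prop:sc} in Stage 1, since all the $H_i$ are quotients of the smooth connected $G_i$ by finite subgroups and are therefore automatically smooth and connected.
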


\begin{proof}
We argue by induction on the length $n$. If $n = 2$, then
we just have an isomorphism $\uu : G_1 \to G_2$ in $\ucC$. Then $\uu$ 
is represented by an isomorphism $u: G_1/G'_1 \to G_2/G'_2$ in $\cC$,
for some finite groups $G'_1 \subset G_1$ and $G'_2 \subset G_2$.

If $n = 3$, then the $\ucC$-morphism $\uu_2 : G_2 \to G_3$ is represented 
by an epimorphism $u_2: G_2 \to G_3/G'_3$ in $\cC$, where $G'_3$ is 
a finite subgroup of $G_3$. We may thus replace $G_3$ with $G_3/G'_3$, 
and assume that $u_2$ is an epimorphism in $\cC$.

Next, $\uu_1: G_1 \to G_2$ is represented by a morphism 
$u_1: G_1 \to G_2/G'_2$ with finite kernel, where $G'_2$ is a finite 
subgroup of $G_2$. We may thus replace $G_1$ (resp.~$G_2$, $G_3$)
with $G_1/\Ker(u_1)$ (resp.~$G_2/G'_2$, $G_3/u_2(G'_2)$)
and assume that $u_1$ is a monomorphism in $\cC$.
Then $u_2 \circ u_1$ has finite image, and hence is zero
since $G_1$ is smooth and connected.

We now have a complex in $\cC$
\[ 0 \longrightarrow G_1 \stackrel{u_1}\longrightarrow G_2 
\stackrel{u_2}{\longrightarrow} G_3 \longrightarrow 0, \]
where $u_1$ is a monomorphism, $u_2$ an epimorphism, and
$\Ker(u_2)/\Im(u_1)$ is finite. By Lemma \ref{lem:fin},
we may choose a finite subgroup $F \subset \Ker(u_2)$
such that $\Ker(u_2) = \Im(u_1) + F$. This yields
a commutative diagram in $\cC$
\[ \CD
0 @>>> G_1 @>{u_1}>> G_2 @>{u_2}>> G_3 @>>> 0 \\
& & @V{q_1}VV @V{q_2}VV @V{\id}VV \\
0 @>>> G_1/u_1^{-1}(F) @>{v_1}>> G_2/F @>{v_2}>> G_3 @>>> 0, \\
\endCD \]
where $q_1,q_2$ denote the quotient maps.
Clearly, $v_1$ is a monomorphism, and $v_2$ an epimorphism.
Also, $v_2 \circ v_1 = 0$, since $q_1$ is an epimorphism. 
Finally, $q_2$ restricts to an epimorphism $\Ker(u_2) \to \Ker(v_2)$, 
and hence $\Ker(v_2) = \Im(v_1)$. This completes the proof in the
case where $n = 3$.

For an arbitrary length $n \geq 4$, we cut the given exact sequence
into two exact sequences in $\ucC$
\[ 
0 \longrightarrow G_1 \stackrel{\uu_1}{\longrightarrow} G_2 
\longrightarrow K \longrightarrow 0, \]
\[ 0 \longrightarrow K \longrightarrow G_3 
\stackrel{\uu_3}{\longrightarrow} \cdots 
\stackrel{\uu_{n-1}}{\longrightarrow} G_n \longrightarrow 0. \]
By the induction assumption, there exists a commutative diagram 
in $\ucC$
\[ \CD
0 @>>> K @>>> G_3 @>{\uu_3}>> \cdots @>{\uu_{n-1}}>> G_n @>>> 0 \\
& & @V{\uf}VV @V{\uf_3}VV  & &  @V{\uf_n}VV \\
0 @>>> L @>{\uv}>> H_3 @>{\uv_3}>> \cdots @>{\uv_{n-1}}>> H_n @>>> 0, \\
\endCD \]
where $f,f_3,\ldots,f_n$ are epimorphisms with finite kernels,
and the bottom sequence comes from an exact sequence in $\cC$. 
Since $\uf$ is an isomorphism in $\ucC$, we have an exact sequence 
\[ 0 \longrightarrow G_1 \stackrel{\uu_1}{\longrightarrow} G_2 
\longrightarrow L \longrightarrow 0 \]
in $\ucC$, and hence another commutative diagram in $\ucC$
\[ \CD
0 @>>> G_1 @>{\uu_1}>> G_2 @>>> L @>>> 0 \\
& & @V{\uf_1}VV @V{\uf_2}VV @V{\ug}VV \\
0 @>>> H_1 @>{\uv_1}>> H_2 @>>> M @>>> 0, \\
\endCD \]
where again $f_1,f_2,g$ are epimorphisms with finite kernels, 
and the bottom sequence comes from an exact sequence in $\cC$.
Denote by $F$ the kernel of $g : L \to M$; then we have 
a commutative diagram in $\ucC$
\[ \CD
0 @>>> L @>>> G_3 @>{\uu_3}>> G_4 @>{\uu_4}>> 
\cdots @>{\uu_{n-1}}>> G_n @>>> 0 \\
& & @V{\ug}VV @V{\uh}VV  @V{\uf_4}VV & &   @V{\uf_n}VV \\
0 @>>> M @>>> H_3/v(F) @>{\uw}>> H_4 @>{\uv_4}>> 
\cdots @>{\uv_{n-1}}>> H_n @>>> 0, \\
\endCD \]
satisfying similar properties. This yields the desired
commutative diagram
\[ \CD
0 @>>> G_1 @>{\uu_1}>> G_2 @>{\uu_2}>> G_3 @>{\uu_3}>> G_4 
@>{\uu_4}>> \cdots @>{\uu_{n-1}}>> G_n @>>> 0 \\
& & @VVV @VVV @VVV @VVV & & @VVV \\
0 @>>> H_1 @>{\uv_1}>> H_2 @>>> H_3/v(F) @>{\uw}>> H_4 
@>{\uv_4}>> \cdots @>{\uv_{n-1}}>> H_n @>>> 0. \\
\endCD \]

\end{proof}

\subsection{Divisible groups}
\label{subsec:dg}

Given a divisible algebraic group $G$ and a non-zero integer $n$, 
the morphism $n_G : G \to G$ factors through an isomorphism 
$G/G[n] \stackrel{\cong}{\longrightarrow} G$.
We denote the inverse isomorphism by
\[ u_n : G \stackrel{\cong}{\longrightarrow} G/G[n]. \]
By construction, we have a commutative triangle
\[ 
\xymatrix{
G \ar[d]_{n_G} \ar[dr]^{q} \\
G \ar[r]^-{u_n} & G/G[n], \\}
\]
where $q$ denotes the quotient morphism. Since $q$ yields the identity
morphism in $\ucC$, we see that $u_n$ yields the inverse of the 
$\ucC$-automorphism $n_G$ of $G$. As a consequence, $\End_{\ucC}(G)$
is a $\bQ$-algebra.

More generally, we have the following:

\begin{proposition}\label{prop:homdiv}
Let $G,H$ be smooth connected algebraic groups, and assume that 
$H$ is divisible.

\begin{enumerate}

\item[{\rm (i)}]  
Every extension group $\Ext^n_{\ucC}(G,H)$ is a $\bQ$-vector space.

\item[{\rm (ii)}]  
The natural map $Q : \Hom_{\cC}(G,H) \to \Hom_{\ucC}(G,H)$ 
is injective and induces an isomorphism
\[ \gamma : \bQ \otimes_{\bZ} \Hom_{\cC}(G,H) 
\longrightarrow \Hom_{\ucC}(G,H), \quad
\frac{1}{n} \otimes f \longmapsto Q(u_n \otimes f) . \]

\item[{\rm (iii)}]
If $G$ is divisible as well, then the natural map
$Q^1 : \Ext^1_{\cC}(G,H) \to \Ext^1_{\ucC}(G,H)$ 
induces an isomorphism
\[ \gamma^1 : \bQ \otimes_{\bZ} \Ext^1_{\cC}(G,H) 
\longrightarrow \Ext^1_{\ucC}(G,H). \]

\end{enumerate}

\end{proposition}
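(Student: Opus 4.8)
The plan is to establish the three parts in order, obtaining (i) and (ii) quickly from the multiplication maps together with the colimit description of morphisms in $\ucC$, and reserving the real work for (iii).

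First I would prove (i). Fix a nonzero integer $n$. By biadditivity of Yoneda $\Ext$ in its second argument, the endomorphism $n_H$ of $H$ induces multiplication by $n$ on each group $\Ext^i_{\ucC}(G,H)$. Since $H$ is divisible, $n_H$ is an isogeny, hence an isomorphism in $\ucC$ by Lemma \ref{lem:equi}(iii), so the induced endomorphism of $\Ext^i_{\ucC}(G,H)$ is bijective. Thus multiplication by every nonzero integer is bijective on $\Ext^i_{\ucC}(G,H)$, which is therefore uniquely divisible, i.e. a $\bQ$-vector space; in particular this holds for $\Hom_{\ucC}(G,H) = \Ext^0_{\ucC}(G,H)$.

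For (ii) I would first note that $Q$ is injective: if $Q(f) = 0$ then $f$ has finite image by Lemma \ref{lem:equi}(iii), and as $G$ is smooth and connected, $\Im(f)$ is smooth, connected and finite, hence $0$, so $f = 0$. To identify $\gamma$, I would use that the subgroups $H[n]$ are cofinal among the finite subgroups of $H$ (any finite $H' \subset H$ is killed by some $n$, so $H' \subset H[n]$), whence Lemma \ref{lem:equi}(ii) gives $\Hom_{\ucC}(G,H) = \lim_{\to} \Hom_{\cC}(G, H/H[n])$. Since $H$ is divisible, $u_n : H \to H/H[n]$ is an isomorphism in $\cC$, so composition with $u_n$ identifies $\Hom_{\cC}(G,H)$ with $\Hom_{\cC}(G,H/H[n])$; under these identifications the transition map attached to $n \mid m$ becomes multiplication by $m/n$ (using that $n_H$ is an epimorphism to cancel it in $g \circ f_1 = m_H$-type relations), so the colimit is $\bQ \otimes_{\bZ} \Hom_{\cC}(G,H)$ and $\gamma$ is the resulting map. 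Concretely: surjectivity holds since any class is represented by some $g : G \to H/H[n]$, hence equals $\gamma(\frac1n \otimes u_n^{-1} g)$; and injectivity holds since, writing a general element as $\frac1n \otimes f$, the vanishing $\frac1n Q(f) = 0$ forces $Q(f) = 0$ and then $f = 0$.

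The heart of the matter is (iii). I would set $\gamma^1(\frac1n \otimes \xi) = \frac1n Q^1(\xi)$, which is legitimate since the target is a $\bQ$-vector space by (i). For surjectivity, represent a class $\bar\xi \in \Ext^1_{\ucC}(G,H)$ by a short exact sequence $0 \to H \to E \to G \to 0$ in $\ucC$ and apply Proposition \ref{prop:exa} with $n = 3$: this yields a short exact sequence $\eta : 0 \to H_1 \to H_2 \to H_3 \to 0$ in $\cC$ together with epimorphisms with finite kernel $f_1 : H \to H_1$ and $f_3 : G \to H_3$ forming a morphism of extensions in $\ucC$, so that $(f_1)_* \bar\xi = f_3^* Q^1(\eta)$ in $\Ext^1_{\ucC}(G,H_1)$. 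As $\Ker(f_1)$ is finite it is killed by some integer $m$, so $m_H$ descends through $f_1$ to a morphism $g : H_1 \to H$ with $g \circ f_1 = m_H$. Pushing the displayed relation forward along $g$ and using $g \circ f_1 = m_H$ gives $m\,\bar\xi = Q^1(g_* f_3^* \eta)$; setting $\xi_0 := g_* f_3^* \eta \in \Ext^1_{\cC}(G,H)$ we obtain $\bar\xi = \gamma^1(\frac1m \otimes \xi_0)$. For injectivity, suppose $\gamma^1(\frac1n \otimes \xi) = 0$; since $\Ext^1_{\ucC}(G,H)$ is torsion-free this forces $Q^1(\xi) = 0$, i.e. $\xi$ splits in $\ucC$. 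By Lemma \ref{lem:exa} the push-out $f_* \xi$ splits in $\cC$ for some epimorphism with finite kernel $f : H \to H'$; choosing $m$ killing $\Ker(f)$ and factoring $m_H = g' \circ f$ yields $m\xi = (m_H)_* \xi = g'_*(f_* \xi) = 0$. Hence $\xi$ is torsion, so $\frac1n \otimes \xi = 0$ in $\bQ \otimes_{\bZ} \Ext^1_{\cC}(G,H)$.

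The step I expect to be the main obstacle is the surjectivity in (iii): passing from an abstract extension in the quotient category $\ucC$ back to an honest extension in $\cC$. This is precisely what Proposition \ref{prop:exa} supplies, by realizing any $\ucC$-exact sequence as a genuine $\cC$-exact sequence up to isogeny; the remaining bookkeeping is to verify that descending $m_H$ through the isogeny $f_1$ multiplies the class by $m$, so that the $\bQ$-structure from (i) lets us divide by $m$ again. Divisibility of $H$ is used essentially throughout (it makes the maps $n_H$ isomorphisms in $\ucC$, hence the target groups $\bQ$-vector spaces); divisibility of $G$ plays the symmetric role through $n_G$ and pullback, and could be exploited dually.
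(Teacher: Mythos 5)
Your proof is correct and follows the paper's approach throughout: parts (i) and (ii) are the same arguments (the $\bQ$-vector space structure via the invertibility of $n_H$ in $\ucC$, and the cofinality of the subgroups $H[n]$), and the injectivity half of (iii) is identical to the paper's. The only variation is in the surjectivity half of (iii), where you invoke Proposition \ref{prop:exa} as a black box and then factor $m_H$ through the isogeny $f_1$ to get $m\bar\xi = Q^1(g_*f_3^*\eta)$, whereas the paper re-runs the straightening argument of that proposition using the divisibility of $G$ and $H$ before absorbing the finite kernel into $H[r]$; the two amount to the same correction step, and yours is slightly more economical (in particular it does not use the divisibility of $G$ at that point).
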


\begin{proof}
(i) Just note that $\Ext^n_{\ucC}(G,H)$ is a module over 
the $\bQ$-algebra $\End_{\ucC}(H)$.

(ii) Let $f \in \Hom_{\cC}(G,H)$, and $n$ a positive integer.
If $\gamma(\frac{1}{n} \otimes f) = 0$, then $\gamma(f) = 0$, i.e., 
$\uf = 0$. Thus, $f = 0$ by Lemma \ref{lem:equi}. This shows 
the injectivity of $\gamma$.  

For the surjectivity, consider a $\ucC$-morphism $\uf : G \to H$ 
represented by a $\cC$-morphism $f : G \to H/H'$, where $H'$ is 
a finite subgroup of $H$. Then $H' \subset H[n]$ for some positive 
integer $n$, 
which we may take to be the order of $H'$. Thus, we may assume that 
$H' = H[n]$. Then the $\cC$-morphism 
$\varphi := u_n^{-1} \circ f : G \to H$ satisfies 
$\underline{\varphi} = n \uf$, i.e., 
$\uf = \gamma(\frac{1}{n} \otimes \varphi)$.

(iii) Consider $\eta \in \Ext^1_{\cC}(G,H)$ such that 
$\gamma^1(\frac{1}{n} \otimes \eta) = 0$ for some positive integer 
$n$. Then of course $\gamma^1(\eta) = 0$, i.e., $\eta$ is represented
by an exact sequence in $\cC$ 
\[ 0 \longrightarrow H \stackrel{u}{\longrightarrow} E
\stackrel{v}{\longrightarrow} G \longrightarrow 0, \]
which splits in $\ucC$. By Lemma \ref{lem:exa} and the
divisibility of $H$, it follows that the push-out by $m_H$ 
of the above extension splits in $\cC$ for some $m > 0$. 
But $(m_H)_* \eta = m \eta$ (see e.g. \cite[Lem.~I.3.1]{Oort66}), 
and hence $m \eta = 0$. This shows the injectivity of $\gamma^1$.

For the surjectivity, we adapt the argument of Proposition 
\ref{prop:exa}. Let $\eta \in \Ext^1_{\ucC}(G,H)$ be represented 
by an exact sequence in $\ucC$
\[ 0 \longrightarrow H \stackrel{\uu}{\longrightarrow} E
\stackrel{\uv}{\longrightarrow} G \longrightarrow 0. \]
Since $G$ is divisible, $\uv$ is represented by a $\cC$-morphism
$v: E \to G/G[m]$ for some positive integer $m$. Replacing $\eta$
with its pull-back 
$u_m^* \eta = (m_G^*)^{-1} \eta = \gamma^1(\frac{1}{m} \otimes \eta)$, 
we may thus assume that $\uv$ is represented by a $\cC$-epimorphism 
$v : E \to G$.

Likewise, since $H$ is divisible, $\uu$ is represented
by some $\cC$-morphism $u : H \to E/E[n]$. Then $\eta$
is represented by the exact sequence in $\ucC$ 
\[ 0 \longrightarrow H \stackrel{\uu}{\longrightarrow} E/E[n]
\stackrel{\uv_n}{\longrightarrow} G \longrightarrow 0, \]
where $v_n : E/E[n] \to G/G[n]$ is the $\cC$-epimorphism induced 
by $v$. So we may further assume that $u$ is represented by
a $\cC$-morphism $u : H \to E$. By Lemma \ref{lem:equi},
we then have $v \circ u = 0$; moreover, $\Ker(u)$ and 
$\Ker(v)/\Im(u)$ are finite. In view of Lemma \ref{lem:fin}, 
we have $\Ker(v) = \Im(u) + E'$ for some finite subgroup 
$E' \subset E$. This yields a commutative diagram in $\cC$
\[ \CD
0 @>>> H @>{u}>> E @>{v}>> G @>>> 0 \\
& & @VVV @VVV @V{\id}VV \\
0 @>>> H/u^{-1}(E') @>{u_1}>> E/E' @>{v_1}>> G @>>> 0, \\
\endCD \]
where the bottom sequence is exact, and $u^{-1}(E')$ is finite.

We may thus choose a positive integer $r$ such that 
$u^{-1}(E') \subset H[r]$. Taking the push-out by the quotient map 
$H/u^{-1}(E') \to H/H[r]$ yields a commutative diagram in $\cC$
\[ \CD
0 @>>> H @>{u}>> E @>{v}>> G @>>> 0 \\
& & @VVV @VVV @V{\id}VV \\
0 @>>> H/H[r] @>{u_2}>> E/E'+ u(H[r]) @>{v_2}>> G @>>> 0, \\
\endCD \]
where the bottom sequence is exact again. Thus, 
$r \eta = (r_H)_* \eta$ is represented by an exact sequence in $\cC$.
\end{proof}

\begin{remarks}\label{rem:div}
(i) Given two divisible groups $G,H$, the map 
\[ Q^1 : \Ext^1_{\cC}(G,H) \longrightarrow \Ext^1_{\ucC}(G,H) \] 
is not necessarily injective. Indeed, the group 
$\Ext^1_{\cC}(A,\bG_m)$ has non-zero torsion for any non-zero 
abelian variety $A$ over (say) a separably closed field,
as follows from the Weil-Barsotti isomorphism.

(ii) We may also consider the natural maps
\[ \gamma^n : \bQ \otimes_{\bZ} \Ext^n_{\cC}(G,H) 
\longrightarrow \Ext^n_{\ucC}(G,H) \] 
for $n \geq 2$.
But these maps turn out to be zero for any algebraic groups $G,H$, 
since $\Ext^n_{\ucC}(G,H) = 0$ (Lemma \ref{lem:van}) and 
$\Ext^n_{\cC}(G,H)$ is torsion (Remark \ref{rem:van}).
\end{remarks}

As a first application of Proposition \ref{prop:homdiv}, we obtain:

\begin{proposition}\label{prop:ru}
Assume that $\car(k) = 0$.

\begin{enumerate}

\item[{\rm (i)}] The composition of the inclusion $\cU \to \cC$ 
with the quotient functor $Q : \cC \to \ucC$ identifies $\cU$ 
with a full subcategory of $\ucC$. 

\item[{\rm (ii)}]  
The unipotent radical functor yields an exact functor 
\[ \uR_u : \ucC \longrightarrow \cU, \] 
which is right adjoint to the inclusion. Moreover, $\uR_u$ commutes 
with base change under field extensions.

\item[{\rm (iii)}]  
Every unipotent group is a projective object in $\ucC$.

\end{enumerate}

\end{proposition}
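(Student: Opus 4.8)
The plan is to handle the three assertions in turn, deducing each from the preceding ones together with the results of \S\ref{subsec:dg}.

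For (i), since $\cU$ is a full subcategory of $\cC$ and $Q$ is the identity on objects, it suffices to show that the natural map $Q : \Hom_\cC(U,V) \to \Hom_{\ucC}(U,V)$ is bijective for all unipotent groups $U,V$. The key observations are that in characteristic $0$ every unipotent group is a vector space, so that $V$ is divisible and (by the equivalence $\cU \simeq \mathrm{Vec}_k$ recalled in \S\ref{subsec:cz}) $\Hom_\cC(U,V) \cong \Hom_k(\Lie U, \Lie V)$ is a finite-dimensional $k$-vector space, in particular a $\bQ$-vector space. I would then invoke Proposition \ref{prop:homdiv}(ii): as $V$ is divisible, $\gamma : \bQ \otimes_\bZ \Hom_\cC(U,V) \to \Hom_{\ucC}(U,V)$ is an isomorphism. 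Since $\Hom_\cC(U,V)$ is already a $\bQ$-vector space, the canonical map $f \mapsto 1 \otimes f$ identifies it with $\bQ \otimes_\bZ \Hom_\cC(U,V)$; composing with $\gamma$ and using $u_1 = \id$ recovers exactly $Q$. Hence $Q$ is bijective on these Hom-groups, so $\cU$ embeds as a full subcategory of $\ucC$.

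For (ii), I would first descend the functor $R_u$ of Theorem \ref{thm:zero}. That functor is exact, takes values in the abelian category $\cU$, and kills every finite group $F$ (a finite unipotent group is zero in characteristic $0$, so $R_u(F) = 0$); the universal property of the quotient functor $Q$ therefore yields a unique exact functor $\uR_u : \cC/\cF \to \cU$ with $\uR_u \circ Q = R_u$, transported to $\ucC$ via Lemma \ref{lem:equi}. Exactness of $\uR_u$ is thus automatic. For the adjunction with the inclusion of (i), the main point is a collapse of a direct limit: for a finite subgroup $G' \subset G$, exactness of $R_u$ together with $R_u(G') = 0$ shows that $R_u(G) \to R_u(G/G')$ is an isomorphism. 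Combining this with Lemma \ref{lem:equi}(ii) and the adjunction for $R_u$ in $\cC$ (Theorem \ref{thm:zero}(ii)), I obtain, the limits being taken over the finite subgroups $G' \subset G$,
\[ \Hom_{\ucC}(U,G) = \lim_{\to} \Hom_\cC(U,G/G') = \lim_{\to} \Hom_\cU(U,R_u(G/G')) = \Hom_\cU(U,\uR_u(G)), \]
where all transition maps are isomorphisms; naturality in $U$ and $G$ is routine. Compatibility with base change follows from the corresponding property of $R_u$ and the fact that base change on $\ucC$ is induced by that on $\cC$.

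For (iii), I would show that for a unipotent group $U$ the functor $\Hom_{\ucC}(U,-)$ is exact. By the adjunction of (ii) it is naturally isomorphic to $\Hom_\cU(U,\uR_u(-))$. Now $\uR_u$ is exact by (ii), and $\Hom_\cU(U,-)$ is exact because $\cU$ is semi-simple (it is the category of finite-dimensional $k$-vector spaces). Hence $\Hom_{\ucC}(U,-)$ is a composite of exact functors, so $U$ is projective in $\ucC$.

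The step I expect to be the main obstacle is the adjunction in (ii): beyond the bijection on Hom-groups, one must check its naturality and ensure that the inclusion used is precisely the fully faithful embedding of (i). The computation itself rests on the clean fact that $R_u$ is exact and annihilates finite groups, which forces $R_u(G) \cong R_u(G/G')$ and makes the defining direct limit constant; once this is in place, (iii) is a formal consequence of exactness of $\uR_u$ and semi-simplicity of $\cU$.
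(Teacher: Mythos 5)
Your proposal is correct and follows essentially the same route as the paper: (i) via Proposition \ref{prop:homdiv} and the fact that $\Hom_\cC(U,V)$ is already a $\bQ$-vector space, (ii) by descending the exact functor $R_u$ through the universal property of $Q$, and (iii) by combining exactness of $\uR_u$ with semi-simplicity of $\cU$. The only (harmless) variation is in the adjunction step of (ii), where you make the defining direct limit collapse using $R_u(G)\cong R_u(G/G')$, whereas the paper instead applies Proposition \ref{prop:homdiv} once more to identify $\Hom_\cC(U,G)$ with $\Hom_{\ucC}(U,G)$; both rest on the same facts.
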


\begin{proof}
(i) Recall that a morphism of unipotent groups is just a linear
map of the associated $k$-vector spaces. In view of Proposition
\ref{prop:homdiv}, it follows that the natural map
$\Hom_\cC(U,V) \to \Hom_{\ucC}(U,V)$ is an isomorphism for
any unipotent groups $U,V$. 

(ii) The functor $R_u : \cC \to \cU$ is exact by Theorem 
\ref{thm:zero}, and sends every finite group to $0$. By the 
universal property of $Q$, there exists a unique exact functor 
$S : \cC/\cF \to \cU$ such that $R_u = S \circ Q$. 
Since $R_u$ commutes with base change under field extensions 
(Theorem \ref{thm:zero} again), so does $S$ by uniqueness. 
Thus, composing $S$ with the inclusion $\ucC \to \cC/\cF$
yields the desired exact functor.

For any unipotent group $U$ and any algebraic group $G$, the 
natural map 
\[ \Hom_\cU(U,R_u(G)) \longrightarrow \Hom_\cC(U,G) \] 
is an isomorphism. By Proposition \ref{prop:homdiv} again, 
the natural map 
\[ Q : \Hom_\cC(U,G) \longrightarrow \Hom_{\ucC}(U,G) \] 
is an isomorphism as well. It follows that $\uR_u$ is right adjoint 
to the inclusion.

(iii) Let $U$ be a unipotent group. Then the functor on $\cC$ 
defined by 
\[ G \longmapsto \Hom_\cU(U,R_u(G)) \]
is exact, since the unipotent radical functor is exact and 
the category $\cU$ is semi-simple. Thus, 
$G \mapsto \Hom_{\ucC}(U,G)$ is exact as well; this yields 
the assertion.
\end{proof}

\subsection{Field extensions}
\label{subsec:fe}

Let $k'$ be a field extension of $k$. Then the assignment 
$G \mapsto G_{k'} := G \otimes_k k'$ yields the \emph{base change functor}
\[ \otimes_k k' : \cC_k \longrightarrow \cC_{k'}. \]
Clearly, this functor is exact and faithful; also, note that
$G$ is connected (resp.~smooth, finite, linear, unipotent, a torus,
an abelian variety, a semi-abelian variety) if and only if so is $G_{k'}$.

\begin{lemma}\label{lem:field}
With the above notation, the functor $\otimes_k k'$ 
yields an exact functor 
\[ \uotimes_k k' : \ucC_k \longrightarrow \ucC_{k'}. \]
\end{lemma}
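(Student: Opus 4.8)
The plan is to obtain $\uotimes_k k'$ from the universal property of the quotient functor $Q_k : \cC_k \to \cC_k/\cF_k$ recalled in \S\ref{subsec:dfp}, rather than constructing it by hand on the explicit direct limits describing the morphisms of $\ucC$. First I would form the composite
\[ R := Q_{k'} \circ (\otimes_k k') : \cC_k \longrightarrow \cC_{k'}/\cF_{k'}. \]
Both factors are exact, namely $\otimes_k k'$ by hypothesis and $Q_{k'}$ as a quotient functor, so $R$ is exact. The key observation is that base change preserves finiteness: if $F$ is a finite $k$-group, then $F_{k'}$ is finite, whence $R(F) = Q_{k'}(F_{k'}) = 0$.

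Next I would invoke the universal property of $Q_k$ (see \cite[Cor.~III.1.2, Cor.~III.1.3]{Gabriel}): since $R$ is exact and annihilates every finite group, it factors uniquely as $R = S \circ Q_k$ for a unique exact functor
\[ S : \cC_k/\cF_k \longrightarrow \cC_{k'}/\cF_{k'}. \]

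Finally I would transport $S$ through the equivalences of Lemma \ref{lem:equi}. The inclusions $\ucC_k \hookrightarrow \cC_k/\cF_k$ and $\ucC_{k'} \hookrightarrow \cC_{k'}/\cF_{k'}$ are exact equivalences of abelian categories, hence both preserve and reflect exactness. On objects, an object of $\ucC_k$ is a smooth connected $k$-group $G$, and $S(G) = R(G) = G_{k'}$ (since $Q$ is the identity on objects); as $G_{k'}$ is again smooth and connected, $S$ carries $\ucC_k$ into $\ucC_{k'}$. Writing $S \circ \iota_k = \iota_{k'} \circ (\uotimes_k k')$ for the corestriction $\uotimes_k k'$, exactness of the left-hand side together with the fact that $\iota_{k'}$ reflects exactness gives exactness of $\uotimes_k k'$, as desired.

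I expect the only delicate point to be this last compatibility bookkeeping: one must confirm that restricting the exact functor $S$ to the full subcategory $\ucC_k$ stays exact and lands in $\ucC_{k'}$. This is immediate once one notes that exactness in $\ucC$ agrees with exactness in $\cC/\cF$ under the equivalence of Lemma \ref{lem:equi}, so no genuine obstruction arises; the substance of the lemma is entirely contained in the stability of finiteness (and of smoothness and connectedness) under base change, together with the universal property of $Q_k$.
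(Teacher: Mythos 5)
Your argument is correct and is essentially the paper's own proof: both form the exact composite $Q_{k'} \circ (\otimes_k k')$, observe that it kills finite groups, and factor it through $\cC_k/\cF_k$ by the universal property of the quotient functor. The extra bookkeeping you do with the equivalence of Lemma \ref{lem:equi} is harmless and implicit in the paper's one-line version.
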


\begin{proof}
The composite functor 
$Q_{k'} \circ \otimes_k k' : \cC_k \to \ucC_{k'}$ is exact and sends
every finite $k$-group to $0$; hence it factors through a unique
exact functor $\cC_k/\cF_k \to \ucC_{k'}$. This yields the existence
and exactness of $\uotimes_k k'$.
\end{proof}

\begin{lemma}\label{lem:des}
Let $k'$ be a purely inseparable field extension of $k$, 
and $G'$ a $k'$-group.

\begin{enumerate}

\item[{\rm (i)}] There exists a smooth $k$-group $G$
and an epimorphism $f: G' \to G_{k'}$ such that $\Ker(f)$ is
infinitesimal. 

\item[{\rm (ii)}]
If $G' \subset H_{k'}$ for some $k$-group $H$, then there exists 
a $k$-subgroup $G \subset H$ such that $G' \subset G_{k'}$ and 
$G_{k'}/G'$ is infinitesimal. 

\end{enumerate}

\end{lemma}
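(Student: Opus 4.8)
The plan is as follows. Since a purely inseparable extension is trivial in characteristic zero (so that one may take $G=G'$), I assume throughout that $\car(k)=p>0$. Both parts will be proved by combining the Frobenius-kernel technique of Proposition \ref{prop:sc} with the elementary observation that raising to a sufficiently large $p$-power sends the (finitely many) structure coefficients of $G'$, which lie in $k'$, into $k$.

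For (i), I first apply the argument of Proposition \ref{prop:sc} over the field $k'$: for $N\gg 0$ the quotient $G'':=G'/\Ker(F^N_{G'/k'})$ is smooth over $k'$ (see \cite[VIIA, Prop.~8.3]{SGA3}), and the quotient map $f_1:G'\to G''$ is an epimorphism with infinitesimal kernel. I then twist $G''$. Its Hopf algebra has finitely many structure constants in $k'$, so for $m\gg 0$ their $p^m$-th powers all lie in $k$; since the Frobenius twist $(G'')^{(p^m)}$ is obtained precisely by raising these constants to the $p^m$-th power, one gets $(G'')^{(p^m)}\cong G_{k'}$ for a $k$-group $G$, which is moreover smooth because smoothness descends along the field extension $k\to k'$. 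As $G''$ is smooth, its relative Frobenius $f_2:=F^m_{G''/k'}:G''\to(G'')^{(p^m)}=G_{k'}$ is faithfully flat, hence an epimorphism, with infinitesimal kernel. Then $f:=f_2\circ f_1:G'\to G_{k'}$ is an epimorphism whose kernel, an extension of $\Ker(f_1)$ by a subgroup of $\Ker(f_2)$, is infinitesimal.

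For (ii), write $A:=\cO(H)\otimes_k k'=\cO(H_{k'})$ and let $I'\subseteq A$ be the Hopf ideal cutting out $G'$. I choose finitely many generators $g_1,\dots,g_r$ of $I'$ and an integer $n$ with $\lambda^{p^n}\in k$ for every coefficient $\lambda\in k'$ occurring in the $g_j$. I take $G:=V(I)$, where $I:=I'\cap(\cO(H)\otimes 1)$; one checks that $I$ is a Hopf ideal of $\cO(H)$, so $G$ is a $k$-subgroup of $H$, and $I\subseteq I'$ gives $I\cdot A\subseteq I'$, i.e.\ $G'\subseteq G_{k'}$. The key computation is that $g_j^{p^n}\in\cO(H)\otimes 1$: writing $g_j=\sum_i h_{ji}\otimes\lambda_{ji}$ one has $g_j^{p^n}=\sum_i h_{ji}^{p^n}\otimes\lambda_{ji}^{p^n}=\sum_i(\lambda_{ji}^{p^n}h_{ji}^{p^n})\otimes 1$, since each $\lambda_{ji}^{p^n}\in k$. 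Hence $g_j^{p^n}\in I'\cap(\cO(H)\otimes 1)=I$, and for an arbitrary $x=\sum_j b_j g_j\in I'$ (with $b_j\in A$) additivity of the Frobenius gives $x^{p^n}=\sum_j b_j^{p^n}g_j^{p^n}\in A\cdot I=I\cdot A$. Thus the ideal $I'/(I\cdot A)$ of $G'$ in $\cO(G_{k'})=A/(I\cdot A)$ is $p^n$-nilpotent; as it contains the augmentation ideal of the sub-Hopf-algebra $\cO(G_{k'}/G')\subseteq\cO(G_{k'})$ and $G_{k'}/G'$ is of finite type, the latter has finite local coordinate ring, i.e.\ $G_{k'}/G'$ is infinitesimal.

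The main obstacle is the infinitesimality of $G_{k'}/G'$ in (ii), which rests entirely on the identity $x^{p^n}\in I\cdot A$ for $x\in I'$; the subtle point is that, although a general element of $A$ has no bounded inseparability exponent over $k$, the chosen generators $g_j$ do, and this is enough once one multiplies by arbitrary $p^n$-th powers drawn from $A$. A secondary subtlety to handle with care in (i) is that the relative Frobenius need not be surjective onto its twist for non-smooth groups (e.g.\ $F_{\alpha_p/k'}=0$): this is exactly why one must pass to the smooth quotient $G''$ before twisting, so that $f_2$ becomes faithfully flat and hence an epimorphism.
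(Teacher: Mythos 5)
Your part (i) is correct and is essentially the paper's argument: pass to the smooth quotient by a Frobenius kernel, then descend the twist $(G'')^{(p^m)}$ to $k$ because its defining coefficients become elements of $k$ after raising to the $p^m$-th power. (The paper makes the ``finitely many structure constants'' step precise by observing that $G''$ is defined over a finite subextension $k''$ of $k'$ and that ${k''}^{[k'':k]}\subseteq k$; you need some such spreading-out anyway, since a general $G''$ is not affine and has no Hopf algebra, but the content is identical.)

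Part (ii), however, has a genuine gap: the claim that $I:=I'\cap(\cO(H)\otimes 1)$ is a Hopf ideal is false, so your $G=V(I)$ need not be a subgroup scheme. Concretely, take $\car(k)=2$, $\alpha\in k\setminus k^2$, $k'=k(a)$ with $a^2=\alpha$, $H=\bG_{a,k}$, and let $G'\subset\bG_{a,k'}$ be the kernel of the additive polynomial $x^4+ax^2$, so $I'=(x^4+ax^2)\subset k'[x]$. Writing $g=g_0+ag_1$ with $g_0,g_1\in k[x]$, one finds that $g\cdot(x^4+ax^2)$ lies in $k[x]$ exactly when $g_0=g_1x^2$, whence $I=(x^6+\alpha x^2)\,k[x]$. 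But $x^6+\alpha x^2=(x^4+ax^2)(x^2+a)$ is not an additive polynomial, and indeed $\Delta(x^6+\alpha x^2)\equiv x^4\otimes x^2+x^2\otimes x^4\not\equiv 0$ modulo $I\otimes k[x]+k[x]\otimes I$; the smallest $k$-subgroup $G$ with $G'\subseteq G_{k'}$ is the strictly larger $V(x^8+\alpha x^4)$. The underlying problem is that $(I'\otimes_{k'}A+A\otimes_{k'}I')\cap(\cO(H)\otimes_k\cO(H))$ can be strictly larger than $I\otimes_k\cO(H)+\cO(H)\otimes_k I$, so the Hopf-ideal condition does not contract. Your $p^n$-th power idea is the right one, but it must be applied to the group rather than to the contracted ideal: with $q=p^n$ such that ${k'}^q\subseteq k$, the twist $G'^{(q)}\subseteq(H^{(q)})_{k'}$ is cut out by equations whose coefficients are $q$-th powers of elements of $k'$, hence descends to a $k$-subgroup $\tilde{G}\subseteq H^{(q)}$; then $G:=(F^n_{H/k})^{-1}(\tilde{G})$ is a $k$-subgroup of $H$ containing $G'$ after base change, and since $F^n_{H_{k'}/k'}$ maps $G_{k'}$ into $G'^{(q)}$, the induced Frobenius of $G_{k'}/G'$ vanishes, so this quotient is infinitesimal. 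This is in substance the argument the paper delegates to \cite[Lem.~4.3.5]{Brion-II}.
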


\begin{proof}
(i) Let $n$ be a positive integer and consider the $n$th relative 
Frobenius morphism
\[ F^n_{G'/{k'}} : G' \longrightarrow G'^{(p^n)}. \]
Recall that the quotient
$G'/\Ker(F^n_{G'/{k'}})$ is smooth for $n \gg 0$. Since $\Ker(F^n_{G'/{k'}})$ 
is infinitesimal, we may assume that $G'$ is smooth. Then $F^n_{G'/{k'}}$ 
is an epimorphism in view of \cite[VIIA, Cor.~8.3.1]{SGA3}.

Next, note that $G'$ is defined over some finite subextension 
$k''$ of $k'$, i.e., there exists a $k''$-subgroup $G''$
such that $G' = G'' \otimes_{k''} k'$. By transitivity of base
change, we may thus assume that $k'$ is finite over $k$. Let
$q := [k':k]$, then $q = p^n$, where $p = \car(k)$ and $n$ is a 
positive integer; also, ${k'}^q \subset k$. Consider again the
morphism $F^n_{G'/{k'}}$; then by construction, 
$G'^{(p^n)} \cong G' \otimes_{k'} k'$, where $k'$ is sent to itself 
via the $q$th power map. Thus, $G'^{(p^n)} \cong G_{k'}$, where 
$G$ denotes the $k$-group $G' \otimes_{k'} k$; here $k'$ is sent to
$k$ via the $q$th power map again. So the induced map
$G' \to G_{k'}$  is the desired morphism.

(ii) As above, we may reduce to the case where $k'$ is finite
over $k$. Then the statement follows by similar arguments, 
see \cite[Lem.~4.3.5]{Brion-II} for details.
\end{proof}

We now are ready to prove Theorem \ref{thm:all} (iv):

\begin{theorem}\label{thm:insep}
Let $k'$ be a purely inseparable field extension of $k$. Then
the base change functor $\uotimes_k k' : \ucC_k \to \ucC_{k'}$
is an equivalence of categories.
\end{theorem}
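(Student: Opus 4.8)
The plan is to show that the base change functor $\uotimes_k k' : \ucC_k \to \ucC_{k'}$ is fully faithful and essentially surjective, using Lemma \ref{lem:des} as the main engine on the object side and a companion fullness argument on the morphism side. First I would reduce to the case where $k'$ is finite over $k$, hence $[k':k] = p^n = q$ with $\car(k) = p$; this is legitimate because any purely inseparable extension is a filtered union of finite subextensions, and both the functor and the claimed equivalence are compatible with such colimits. Throughout, the key geometric input is that the $n$th relative Frobenius $F^n_{G/k} : G \to G^{(p^n)}$ has infinitesimal, hence finite, kernel, so it becomes an isomorphism in $\ucC$; moreover $G^{(p^n)} \cong (G \otimes_k k')$-type base changes relate the $k$-structure to the $k'$-structure via the $q$th power map, exactly as exploited in Lemma \ref{lem:des}.

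For \emph{essential surjectivity}, I would take an arbitrary smooth connected $k'$-group $G'$ (it suffices to hit objects of $\ucC_{k'}$, and by Lemma \ref{lem:equi} these are the smooth connected ones). Lemma \ref{lem:des}(i) furnishes a smooth $k$-group $G$ together with an epimorphism $f : G' \to G_{k'}$ whose kernel is infinitesimal, hence finite. By Lemma \ref{lem:equi}(iii) this $f$ is an isomorphism in $\ucC_{k'}$, so $G' \cong G_{k'} = (G) \uotimes_k k'$ in $\ucC_{k'}$, giving essential surjectivity (after replacing $G$ by a smooth connected model, which is harmless up to isogeny).

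For \emph{faithfulness and fullness}, fix smooth connected $k$-groups $G, H$ and analyze the induced map
\[
\uotimes_k k' : \Hom_{\ucC_k}(G,H) \longrightarrow \Hom_{\ucC_{k'}}(G_{k'}, H_{k'}).
\]
A morphism on the right is, by Lemma \ref{lem:equi}(ii), represented by a $k'$-morphism $\varphi : G_{k'} \to H_{k'}/H''$ for some finite $k'$-subgroup $H'' \subset H_{k'}$. The strategy is to descend $\varphi$ to $k$ up to isogeny: apply a suitable power of Frobenius $F^n_{-/k'}$ to kill the purely inseparable obstruction, so that $\varphi \circ F^n_{G_{k'}/k'}$ (equivalently, after identifying Frobenius twists with base changes via the $q$th power map as in Lemma \ref{lem:des}) factors through a $k$-morphism $G \to H/H'$ for a finite $k$-subgroup $H'$. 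Since $F^n$ is an isomorphism in the isogeny category, this produces a preimage in $\Hom_{\ucC_k}(G,H)$, giving fullness; injectivity of the map follows from the analogous observation that a $k$-morphism becoming zero (i.e.\ having finite, hence infinitesimal, image) after base change already has finite image over $k$, because base change along a purely inseparable extension is a homeomorphism on underlying spaces and detects finiteness.

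The main obstacle I anticipate is the fullness step: unlike essential surjectivity, where Lemma \ref{lem:des}(i) hands us exactly the object we need, here I must descend a morphism defined only over $k'$, and the cleanest route is to push it through a high enough Frobenius and then invoke the identification $G'^{(p^n)} \cong (\text{something base-changed from }k)$ together with Lemma \ref{lem:des}(ii) to realize the relevant graph or kernel as coming from a $k$-subgroup up to infinitesimal error. Bookkeeping the finite subgroups $H', H''$ and checking that the resulting $k$-morphism maps to the prescribed $\varphi$ in the colimit defining $\Hom_{\ucC_{k'}}$ is where the care is needed; once fullness and faithfulness are in hand, combined with essential surjectivity, the functor is an equivalence.
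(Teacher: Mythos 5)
Your skeleton coincides with the paper's on two of the three points: essential surjectivity is obtained from Lemma \ref{lem:des}(i) exactly as you describe, and faithfulness from the observation that if $\uf_{k'}=0$ then the image of a representative $f_{k'}$ is finite, hence zero by Lemma \ref{lem:epi} (it is a smooth connected quotient of $G_{k'}$), whence $f=0$. The genuine difference is in the fullness step, which is the heart of the proof. The paper never twists the morphism by Frobenius: it first uses Lemma \ref{lem:des}(ii) to replace the finite $k'$-subgroup $H''$ by a finite $k$-subgroup $I$ of $H$, so that $\uf$ is represented by an honest $k'$-morphism $f\colon G_{k'}\to H_{k'}$, and then applies Lemma \ref{lem:des}(ii) a second time to the graph $\Gamma(f)\subset G_{k'}\times H_{k'}$, producing a $k$-subgroup $\Delta\subset G\times H$ with $\Delta_{k'}/\Gamma(f)$ finite; the finiteness of $\Delta\cap(0\times H)$, a dimension count, and the smoothness and connectedness of $G$ then show that the image of $\Delta$ in $G\times H/H'$, for a suitable finite $H'$, is the graph of a $k$-morphism $\varphi$ with $\varphi_{k'}=q_{k'}\circ f$. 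Your primary route --- descending $f$ itself through a Frobenius twist --- also works and is arguably shorter: for $[k':k]=q=p^n$ one has $k'^{\,q}\subset k$, so the twist $f^{(q)}$ is the base change of a $k$-morphism $G^{(q)}\to H^{(q)}$ between groups defined over $k$, and the relative Frobenii, being isogenies of smooth connected groups, are invertible in the isogeny categories, so $\uf$ is a base change from $\ucC_k$. What the paper's graph argument buys in exchange is that it needs no choice of $n$ and applies verbatim to infinite purely inseparable extensions, whereas your route genuinely requires the preliminary reduction to finite subextensions that you mention. Two small corrections to your sketch: the composite that descends is $F^n_{H_{k'}/k'}\circ f=f^{(q)}\circ F^n_{G_{k'}/k'}$, not $f\circ F^n_{G_{k'}/k'}$, which does not typecheck since $F^n_{G_{k'}/k'}$ lands in $G_{k'}^{(q)}$ rather than issuing from it; and in the faithfulness step the phrase ``finite, hence infinitesimal'' should simply read ``finite'' --- finiteness of the image descends because scheme-theoretic image formation commutes with the flat base change $\otimes_k k'$, and Lemma \ref{lem:epi} then gives the vanishing.
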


\begin{proof}
By Lemma \ref{lem:des}, every $k'$-group $G'$ is isogenous to $G_{k'}$
for some smooth $k$-group $G$. It follows that $\uotimes_k k'$
is essentially surjective.

Next, let $G,H$ be smooth connected $k$-groups, and
$\uf : G \to H$ a $\ucC_k$-morphism, represented by 
a $\cC_k$-morphism $f : G \to H/H'$ for some finite $k$-subgroup
$H' \subset H$. If $\uf_{k'} : G_{k'} \to H_{k'}$ is zero in $\ucC_{k'}$,
then the image of $f_{k'} : G_{k'} \to H_{k'}/H'_{k'}$ is finite.
By Lemma \ref{lem:epi}, it follows that $f_{k'} = 0$.
This shows that $\uotimes_k k'$ is faithful.

We now check that $\uotimes_k {k'}$ is full. Let again $G,H$ 
be smooth connected $k$-groups, and let 
$\uf \in \Hom_{\ucC_{k'}}(G_{k'},H_{k'})$.
We show that there exists a finite $k$-subgroup 
$H' \subset H$ and a $k$-morphism $\varphi : G \to H/H'$ such
that $\varphi_{k'}$ represents $\uf$. For this, we may replace $H$
with its quotient by any finite $k$-subgroup.

Choose a representative $f : G_{k'} \to H_{k'}/H''$ of $\uf$, where
$H'' \subset H_{k'}$ is a finite $k'$-subgroup.
By Lemma \ref{lem:des}, there exists a $k$-subgroup $I \subset H$ 
such that $H'' \subset I_{k'}$ and $I_{k'}/H''$ is finite; then $I_{k'}$ is
finite as well, and hence so is $I$. We may thus replace $H$ by
$H/I$, and $f$ by its composition with the quotient morphism 
$H_{k'}/H'' \to H_{k'}/I_{k'} = (H/I)_{k'}$. Then $\uf$ is represented by 
a morphism $f: G_{k'} \to H_{k'}$.

Consider the graph $\Gamma(f) \subset G_{k'} \times_{k'} H_{k'}$. 
By Lemma \ref{lem:des} again, there exists a $k$-subgroup
$\Delta \subset G \times H$ such that $\Gamma(f) \subset \Delta_{k'}$
and $\Delta_{k'}/\Gamma(f)$ is finite. Then the intersection
$\Delta_{k'} \cap (0 \times H_{k'})$ is finite, since 
$\Gamma(f) \cap (0 \times H_{k'})$ is zero. Thus, 
$\Delta \cap (0 \times H)$ is finite as well; equivalently,
the $k$-group $H' := H \cap (0 \times \id)^{-1}(\Delta)$ is finite.
Denoting by $\Gamma$ the image of $\Delta$ in 
$G \times H/H'$, we have a cartesian square
\[ \CD
\Delta @>>> G \times H \\
@VVV @VVV \\
\Gamma @>>> G \times H/H', \\
\endCD \]
where the horizontal arrows are closed immersions, and the left
(resp.~right) vertical arrow is the quotient by 
$\Delta \cap (0 \times H)$ (resp.~by $H'$ acting on $H$ via 
addition). So $\Gamma$ is a $k$-subgroup of $G \times H/H'$,
and $\Gamma \cap (0 \times H/H')$ is zero; in other words,
the projection $\pi : \Gamma \to G$ is a closed immersion. Since
$G$ is smooth and connected, and 
$\dim(\Gamma) = \dim(\Delta) = \dim \Gamma(f) = \dim(G)$, it
follows that $\pi$ is an isomorphism. In other words, $\Gamma$
is the graph of a $k$-morphism $\varphi : G \to H/H'$. 
Since the above cartesian square lies in a push-out diagram,
\[ \CD
0 @>>> \Delta @>>> G \times H @>{\varphi - q}>> H/H' @>>> 0\\
& & @VVV @VVV @V{\id}VV \\
0 @>>> \Gamma @>>> G \times H/H' @>{\varphi - \id}>> H/H' @>>> 0, \\
\endCD \]
where $q : H \to H/H'$ denotes the quotient morphism, 
it follows that $\Delta = \ker(\varphi - q)$.
As $\Gamma(f) \subset \Delta_{k'}$, we see that 
$\varphi_{k'} = q_{k'} \circ f$. Thus, $\uf$ is represented by 
$\varphi_{k'}$; this completes the proof of the fullness assertion.
\end{proof}

\begin{remarks}\label{rem:fe}
(i) Likewise, the base change functor induces equivalences
of  categories $\ucU_k \to \ucU_{k'}$, $\ucT_k \to \ucT_{k'}$, 
$\ucL_k \to \ucL_{k'}$, and $\ucA_k \to \ucA_{k'}$. 
For tori, this follows much more directly from the anti-equivalence 
of $\ucT_k$ with the category of rational representations of the 
absolute Galois group of $k$, see Proposition \ref{prop:tss}.

(ii) In particular, the category $\ucU_k$ is equivalent to 
$\ucU_{k_i}$, where $k_i$ denotes the perfect closure of $k$ in 
$\bar{k}$. Recall from \cite[Thm.~V.1.4.3, Cor.~V.1.4.4]{DG} 
that the category $\cU_{k_i}$ is anti-equivalent to the category 
of finitely generated modules over the Dieudonn\'e ring 
$\bD = \bD_{k_i}$ which are killed by some power of the Verschiebung 
map $V$. Moreover, the category $\ucU_{k_i}$ is anti-equivalent 
to the category of finitely generated modules over the localization 
$\bD_{(V)}$ which are killed by some power of $V$; see 
\cite[\S V.3.6.7]{DG}.

By work of Schoeller, Kraft, and Takeuchi (see 
\cite{Schoeller, Kraft, Takeuchi}), the category $\cU_k$ is 
anti-equivalent to a category of finitely generated modules 
over a certain $k$-algebra, which generalizes the Dieudonn\'e 
ring but seems much less tractable.
\end{remarks}

\section{Tori, abelian varieties, and homological dimension}
\label{subsec:tavhd}

\subsection{Tori}
\label{subsec:t}

Denote by $\cT$ (resp.~$\cM$, $\cF \cM$) the full subcategory of $\cC$ 
with objects the tori (resp.~the groups of multiplicative type,
the finite groups of multiplicative type). Then $\cT$ is stable under
taking quotients and extensions, but not subobjets; in particular,
$\cT$ is an additive subcategory of $\cC$, but not an abelian
subcategory. Also, $\cF$ and $\cF \cM$ are stable under taking subobjects, 
quotients and extensions. Thus, we may form the quotient abelian category 
$\cM/\cF \cM$, as in \S \ref{subsec:dfp}. One may readily check 
that $\cM/\cF \cM$ is a full subcategory of $\cC/\cF$.

Let $\ucT$ be the full subcategory of $\cM/\cF \cM$ with objects
the tori. Since these are the smooth connected objects of $\cM$, 
one may check as in Lemma \ref{lem:equi} that
the inclusion of $\ucT$ in $\cM/\cF \cM$ is an equivalence of 
categories. The remaining statements of Lemma \ref{lem:equi}
also adapt to this setting; note that we may replace the
direct limits over all finite subgroups with those over all 
$n$-torsion subgroups, since tori are divisible. Also, Proposition 
\ref{prop:homdiv} yields natural isomorphisms
\[ \bQ \otimes_\bZ \Hom_\cC(T_1,T_2) 
\stackrel{\cong}{\longrightarrow} \Hom_{\ucT}(T_1,T_2) \]
for any tori $T_1,T_2$. 

By assigning with each group of multiplicative type $G$
its character group, 
\[ X(G):= \Hom_{k_s}(G_{k_s},\bG_{m,k_s}), \] 
one obtains an anti-equivalence between $\cM$ (resp.~$\cF \cM$) 
and the category of finitely generated (resp.~finite) abstract
commutative groups equipped with the discrete topology and
a continuous action of the Galois group $\Gamma$; see 
\cite[Thm.~IV.1.3.6]{DG}. Thus, the assignment 
\[ X_\bQ : G \longmapsto \bQ \otimes_{\bZ} X(G) =: X(G)_\bQ \]
yields a contravariant exact functor from $\cM$ to the 
category $\Rep_\bQ(\Gamma)$ of finite-dimensional $\bQ$-vector 
spaces equipped with a continuous representation of $\Gamma$
as above; moreover, every finite group of multiplicative type 
is sent to $0$. This yields in turn a contravariant exact functor
\[ \uX_\bQ : \ucT \longrightarrow \Rep_\bQ(\Gamma). \]

\begin{proposition}\label{prop:tss}
The functor $\uX_\bQ$ is an anti-equivalence of categories. 
In particular, the category $\ucT$ is semi-simple, and 
$\Hom_{\ucT}(T_1,T_2)$ is a finite-dimensional $\bQ$-vector space 
for any tori $T_1,T_2$.
\end{proposition}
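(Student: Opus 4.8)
The plan is to establish that $\uX_\bQ$ is an anti-equivalence by exploiting the classical anti-equivalence at the level of $\cM$ and then checking that passing to the isogeny category corresponds exactly to tensoring the character-group data with $\bQ$. Concretely, I would show that $\uX_\bQ$ is fully faithful and essentially surjective, and semi-simplicity will then follow formally from the fact that $\Rep_\bQ(\Gamma)$ is semi-simple.

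First I would verify essential surjectivity. Given a finite-dimensional $\bQ$-vector space $V$ with a continuous $\Gamma$-action, the continuity means the action factors through a finite quotient $\Gamma \to \Gamma/\Gamma'$, so it preserves some finitely generated $\bZ$-lattice $\Lambda \subset V$ stable under $\Gamma$ with $V = \bQ \otimes_\bZ \Lambda$. By the classical anti-equivalence (\cite[Thm.~IV.1.3.6]{DG}), the Galois lattice $\Lambda$ is the character group $X(T)$ of a torus $T$, and then $\uX_\bQ(T) = X(T)_\bQ \cong V$. Thus every object of $\Rep_\bQ(\Gamma)$ lies in the essential image.

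Next I would check full faithfulness. The key point is the displayed isomorphism, already recorded in the excerpt via Proposition~\ref{prop:homdiv}, namely
\[ \bQ \otimes_\bZ \Hom_\cC(T_1,T_2)
\stackrel{\cong}{\longrightarrow} \Hom_{\ucT}(T_1,T_2). \]
On the other side, the classical anti-equivalence gives $\Hom_\cC(T_1,T_2) \cong \Hom_\Gamma(X(T_2),X(T_1))$, an isomorphism of abelian groups which are finitely generated and free. Tensoring with $\bQ$ is exact and commutes with forming $\Gamma$-equivariant homomorphisms between finitely generated $\Gamma$-modules (since $\Gamma$ acts through a finite quotient, $\Hom_\Gamma(-,-)$ is computed by a finite limit that commutes with the flat base change $\bZ \to \bQ$), so
\[ \bQ \otimes_\bZ \Hom_\Gamma(X(T_2),X(T_1))
\stackrel{\cong}{\longrightarrow} \Hom_\Gamma(X(T_2)_\bQ,X(T_1)_\bQ). \]
Composing these identifications shows $\uX_\bQ$ induces a bijection $\Hom_{\ucT}(T_1,T_2) \cong \Hom_{\Rep_\bQ(\Gamma)}(\uX_\bQ(T_2),\uX_\bQ(T_1))$, which is exactly full faithfulness for the contravariant functor. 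Finiteness of $\Hom_{\ucT}(T_1,T_2)$ as a $\bQ$-vector space is immediate from this description.

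Finally, semi-simplicity of $\ucT$ follows at once: an anti-equivalence carries $\Rep_\bQ(\Gamma)$, which is semi-simple by Maschke's theorem (the representations factor through finite quotients of $\Gamma$ and $\bQ$ has characteristic $0$), onto $\ucT$, and semi-simplicity is preserved under anti-equivalence. The main obstacle I anticipate is the commutation of $\bQ \otimes_\bZ (-)$ with $\Hom_\Gamma$; this is routine precisely because the $\Gamma$-action is continuous and hence factors through a finite group, reducing the computation to a finite system of linear equations over $\bZ$ to which flat base change applies, but it is the step that genuinely uses the continuity hypothesis and deserves to be stated carefully rather than asserted.
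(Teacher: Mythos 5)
Your proof is correct and follows essentially the same route as the paper: essential surjectivity via a $\Gamma$-stable lattice spanning the given $\bQ$-representation, and full faithfulness by combining the classical anti-equivalence $\Hom_{\cM}(T_1,T_2)\cong\Hom^{\Gamma}(X(T_2),X(T_1))$ with the isomorphism $\bQ\otimes_{\bZ}\Hom_{\cC}(T_1,T_2)\cong\Hom_{\ucT}(T_1,T_2)$ from Proposition \ref{prop:homdiv}. The only difference is that you spell out the commutation of $\bQ\otimes_{\bZ}(-)$ with $\Hom^{\Gamma}$ and the Maschke argument for semi-simplicity, which the paper leaves implicit.
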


\begin{proof}
Given a finite-dimensional $\bQ$-vector space $V$ equipped
with a continuous action of $\Gamma$, there exists a
finitely generated $\Gamma$-stable subgroup $M \subset V$
which spans $V$; thus, $V \cong X(T)_\bQ$, where $T$ denotes
the torus with character group $M$. So $\uX_\bQ$ is essentially
surjective.

Given two tori $T_1,T_2$, the natural isomorphism
$\Hom_\cM(T_1,T_2) \cong \Hom^\Gamma(X(T_2),X(T_1))$
yields an isomorphism
\[ \Hom_{\ucT}(T_1,T_2) \cong \Hom^\Gamma(X(T_2)_\bQ, X(T_1)_\bQ) \]
in view of Proposition \ref{prop:homdiv}. It follows that 
$\uX_\bQ$ is fully faithful.
\end{proof}

\begin{lemma}\label{lem:maxt}
\begin{enumerate}

\item[{\rm (i)}]
Every algebraic group $G$ has a unique maximal torus, $T(G)$. 

\item[{\rm (ii)}]
Every morphism of algebraic groups $u : G \to H$ sends $T(G)$ to $T(H)$. 

\item[{\rm (iii)}]
The formation of $T(G)$ commutes with base change under field extensions.

\end{enumerate}

\end{lemma}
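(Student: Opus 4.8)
The plan is to prove Lemma \ref{lem:maxt} in three parts, establishing the existence and uniqueness of a maximal torus $T(G)$, its functoriality, and its compatibility with base change. First I would address existence and uniqueness in (i). The natural candidate for $T(G)$ is the subgroup generated by all tori contained in $G$. To see this is itself a torus, I would take two tori $T_1, T_2 \subset G$; their sum $T_1 + T_2$ is the image of the morphism $T_1 \times T_2 \to G$, hence a quotient of a torus, hence a torus (the class of tori is stable under quotients, as noted in the discussion preceding Proposition \ref{prop:tss}). Since $G$ is noetherian as a topological space, an increasing chain of such sums must stabilize, so there is a largest torus $T(G)$, necessarily containing every torus in $G$; this gives both existence and uniqueness at once.

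For (ii), the key point is the orthogonality built into the theory: a morphism $u : G \to H$ sends the torus $T(G)$ to a quotient $u(T(G))$, which is again a torus (being the image of a torus). But $u(T(G))$ is a torus contained in $H$, hence contained in the maximal torus $T(H)$ by the maximality established in (i). Thus $u(T(G)) \subseteq T(H)$, which is exactly the assertion. This step is essentially formal once (i) is in place.

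For (iii), I would use the fact recorded just before Lemma \ref{lem:field} that $G$ is a torus if and only if $G_{k'}$ is a torus, for any field extension $k'$. Writing $T := T(G)$, base change gives that $T_{k'}$ is a torus in $G_{k'}$, so $T_{k'} \subseteq T(G_{k'})$. For the reverse inclusion I would like to descend the maximal torus of $G_{k'}$ back to $k$; this is where the main subtlety lies. The cleanest route is to reduce to the case of a Galois extension and a purely inseparable extension separately. For a purely inseparable $k'$, one can invoke the fact that the maximal torus is insensitive to such extensions (its formation being governed by the multiplicative-type part, which descends along Frobenius, cf.\ Lemma \ref{lem:des}); for a separable (hence, after enlarging, Galois) extension $k'/k$, the uniqueness from (i) forces $T(G_{k'})$ to be stable under the Galois action, whence it descends to a $k$-subtorus of $G$ which must then lie in $T(G)$ by maximality, giving $T(G_{k'}) \subseteq T(G)_{k'}$.

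The hard part will be the descent argument in (iii): one must handle general field extensions, and the cleanest strategy is to split into the separable and purely inseparable cases and invoke Galois descent together with the uniqueness from part (i). The remaining parts (i) and (ii) are straightforward consequences of the stability of tori under quotients combined with the noetherian property of $G$.
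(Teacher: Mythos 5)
Parts (i) and (ii) of your proposal are correct and coincide with the paper's argument: the sum of two subtori is a torus (as a quotient of $T_1 \times T_2$), a noetherian/dimension argument gives a largest one, and (ii) is formal from the fact that images of tori are tori.

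Part (iii) is where your proposal has a genuine gap. The statement concerns an \emph{arbitrary} field extension $k'/k$, but your strategy of splitting into a Galois extension and a purely inseparable extension only covers algebraic extensions; a transcendental extension (e.g.\ $k' = k(t)$, or $k'$ an algebraic closure of that) is not reached by Galois descent plus Frobenius descent, so the reverse inclusion $T(G_{k'}) \subseteq T(G)_{k'}$ is not established in general. Moreover, even in the purely inseparable case your appeal to Lemma \ref{lem:des} is only a gesture: that lemma produces a $k$-subgroup $S \subset G$ with $T(G_{k'}) \subset S_{k'}$ and $S_{k'}/T(G_{k'})$ infinitesimal, and one still has to argue why this forces $T(G_{k'}) \subseteq T(G)_{k'}$. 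The paper avoids descent entirely: it applies the structure theorems (Theorem \ref{thm:zero} in characteristic $0$, Theorem \ref{thm:pos} in positive characteristic) to show that $G/T(G)$ is an iterated extension of an abelian variety, a finite group, and a unipotent group; these classes are stable under arbitrary base change and such a group contains no non-zero torus (by Proposition \ref{prop:hom} together with Lemma \ref{lem:epi}), whence $(G/T(G))_{k'}$ has trivial maximal torus and $T(G)_{k'}$ is already maximal in $G_{k'}$. You should replace your descent argument for (iii) by this structural one, or else supply a genuine argument covering transcendental extensions.
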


\begin{proof}
(i) This follows from the fact that $T_1 + T_2$ 
is a torus for any subtori $T_1,T_2 \subset G$. 

(ii) Just note that the image of a torus under any morphism is still
a torus.

(iii) Consider an algebraic group $G$, its maximal torus $T$,
and a field extension $k'$ of $k$. If $\car(k) = 0$, then Theorem 
\ref{thm:zero} implies that $G/T$ is a an extension of an abelian 
variety by a product $M \times U$, where $M$ is finite and $U$ 
unipotent. As a consequence, a similar assertion holds for
$G_{k'}/T_{k'}$; it follows that $G_{k'}/T_{k'}$ contains no non-zero 
torus, and hence $T_{k'}$ is the maximal torus of $G_{k'}$.
On the other hand, if $\car(k) > 0$, then $G/T$ is a $3$-step 
extension of a unipotent group by an abelian variety by a finite 
group, in view of Theorem \ref{thm:pos}. It follows similarly 
that $T_{k'}$ is the maximal torus of $G_{k'}$.
\end{proof}

By Lemma \ref{lem:maxt}, the assignment $G \mapsto T(G)$ yields 
a functor 
\[ T : \cC \longrightarrow \cT, \]
the \emph{functor of maximal tori}. This functor is not exact, 
as seen from the exact sequence 
\[ 0 \longrightarrow G[n] \longrightarrow G 
\stackrel{n_G}{\longrightarrow} G \longrightarrow 0, \]
where $G$ is a non-zero torus and $n$ a non-zero integer.
But $T$ is exact up to finite groups, as shown by the following;

\begin{lemma}\label{lem:max}
Every exact sequence in $\cC$ 
\[ 0 \longrightarrow G_1 \stackrel{u}{\longrightarrow} G_2 
\stackrel{v}{\longrightarrow} G_3 \longrightarrow 0 \]
yields a complex in $\cC$
\[ 0 \longrightarrow T(G_1) \stackrel{T(u)}{\longrightarrow} T(G_2) 
\stackrel{T(v)}{\longrightarrow} T(G_3) \longrightarrow 0, \]
where $T(u)$ is a monomorphism, $T(v)$ an epimorphism, and
$\Ker \, T(v)/\Im \, T(u)$ is finite. 
\end{lemma}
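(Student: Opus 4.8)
The plan is to treat the three assertions separately, in increasing order of difficulty. The monomorphism property of $T(u)$ and the fact that we have a complex are essentially formal: since $T(u)$ and $T(v)$ are by definition the restrictions of $u$ and $v$ to the maximal tori (these restrictions land in the maximal tori of the targets by Lemma \ref{lem:maxt} (ii)), the injectivity of the closed immersion $u$ shows that $T(u)$ is a monomorphism, and $v \circ u = 0$ immediately yields $T(v) \circ T(u) = 0$.

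For the finiteness of $\Ker\, T(v)/\Im\, T(u)$, I would argue directly over the given field, with no reduction. Using exactness of the original sequence, $\Ker\, T(v) = T(G_2) \cap \Ker(v) = T(G_2) \cap u(G_1)$, a closed subgroup of the torus $T(G_2)$, hence of multiplicative type, and contained in $u(G_1)$. Transporting along the isomorphism $u : G_1 \stackrel{\cong}{\longrightarrow} u(G_1)$, this subgroup corresponds to $N := u^{-1}(T(G_2)) \subset G_1$, while $\Im\, T(u) = u(T(G_1))$ corresponds to $T(G_1)$; thus $\Ker\, T(v)/\Im\, T(u) \cong N/T(G_1)$. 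By Lemma \ref{lem:maxt} (ii) we have $T(G_1) \subset N$, and $T(G_1)$ is a torus, so it is contained in the maximal torus $T(N)$; conversely $T(N)$ is a subtorus of $G_1$ and hence contained in $T(G_1)$. Therefore $T(G_1) = T(N)$, and $N/T(N)$ is finite, since a group of multiplicative type modulo its maximal torus has finite (torsion) character group.

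The hard part will be showing that $T(v)$ is an epimorphism, i.e.\ that $v(T(G_2)) = T(G_3)$; this is where the difficulty of lifting a torus through an extension over an imperfect field appears. Since the formation of the maximal torus commutes with base change (Lemma \ref{lem:maxt} (iii)) and surjectivity of a homomorphism can be checked after the faithfully flat extension $k \to \kb$, I would reduce to the case where $k$ is algebraically closed. Setting $T_3 := T(G_3)$ and $H := v^{-1}(T_3)$, it then suffices to show that the maximal torus of $H$ surjects onto $T_3$, for then $v(T(G_2)) \supseteq v(T(H)) = T_3$. Over the algebraically closed field one may replace $H$ by the smooth connected subgroup $H^0_\red$, and since $T_3/v(H^0_\red)$ is smooth, connected and finite, hence trivial, $H^0_\red$ still surjects onto $T_3$.

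Finally, for a smooth connected group surjecting onto a torus I would invoke the structure theory: by Chevalley (Theorem \ref{thm:che}) the linear part $L(H^0_\red)$ already surjects onto $T_3$, because the abelian variety quotient maps trivially to the affine group $T_3$ (Proposition \ref{prop:hom}); and by Theorem \ref{thm:lin}, over the algebraically closed field $L(H^0_\red) = M \times U$ with $M$ a torus and $U$ unipotent, while $\Hom_\cC(U,T_3) = 0$ (Proposition \ref{prop:hom}). Hence the surjection factors through the torus $M \subset T(H)$, giving $v(T(H)) = T_3$ as required. The main obstacle throughout is precisely this lifting of the maximal torus through the extension, which is not directly available over imperfect fields and thus forces the passage to $\kb$.
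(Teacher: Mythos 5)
Your proof is correct in substance, and the first two assertions (the monomorphism property and the finiteness of $\Ker \, T(v)/\Im \, T(u)$ via ``multiplicative type modulo its maximal torus is finite'') coincide with the paper's argument, just spelled out in more detail. For the epimorphism assertion you take a genuinely different route: you base-change to $\kb$ (legitimate, since surjectivity descends along faithfully flat extensions and the formation of $T$ commutes with base change by Lemma \ref{lem:maxt} (iii)), pass to $H^0_\red$ for $H = v^{-1}(T(G_3))$, and combine the Chevalley decomposition with the perfect-field splitting $L = M \times U$ of Theorem \ref{thm:lin}. The paper instead stays over $k$: it replaces $G_2$ by $v^{-1}(T(G_3))$ and then by $G_2/T(G_2)$, reducing to showing that a group with trivial maximal torus has no nonzero torus quotient, which it extracts from the structure theorems \ref{thm:zero} and \ref{thm:pos} (where the multiplicative part $M$ becomes finite). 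Your version buys a more self-contained argument resting only on \S 2.1 and the perfect-field theory; the paper's version avoids base change entirely but leans on the later structure results. One small repair is needed in your last step: over $\kb$ in characteristic $p$ the factor $M$ of $L(H^0_\red)$ is connected of multiplicative type but need not be a torus (it may contain $\mu_p$), so $M \not\subset T(H)$ in general. You should pass to the maximal torus $T(M) \subset T(H)$ and note that $M/T(M)$ is finite, so the image of $T(M)$ in $T_3$ has finite index, hence equals $T_3$ by Lemma \ref{lem:epi} --- the same finiteness fact you already invoke in your second paragraph.
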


\begin{proof}
We argue as in the proof of Theorem \ref{thm:pos} (iii).
Clearly, $T(u)$ is a monomorphism. Also, the group
\[ \Ker \, T(v)/\Im \, T(u) = T(G_2) \cap u(G_1)/u(T(G_1)) \]
is the quotient of a group of multiplicative type by its 
maximal torus, and hence is finite.

To show that $T(v)$ is an epimorphism, we may replace $G_2$ with 
$v^{-1}(T(G_3))$, and hence assume that $G_3$ is a torus. Next, 
we may replace $G_2$ with $G_2/T(G_2)$, and hence assume that 
$T(G_2)$ is zero. We then have to check that $G_3$ is zero.

If $\car(k) = 0$, then there is an exact sequence 
\[ 0 \longrightarrow M \times U \longrightarrow G_2 
\longrightarrow A \longrightarrow 0 \]
as in Theorem \ref{thm:zero}, where $M$ is finite. Thus, every 
morphism $G_2 \to G_3$ has finite image. Since $v : G_2 \to G_3$
is an epimorphism, it follows that $G_3 = 0$.
On the other hand, if $\car(k) > 0$, then there are exact sequences 
\[ 0 \longrightarrow H \longrightarrow G_2 
\longrightarrow U \longrightarrow 0, \quad  
0 \longrightarrow M \longrightarrow H 
\longrightarrow A \longrightarrow 0 \]
as in Theorem \ref{thm:pos}, where $M$ is finite. This implies again 
that every morphism $G_2 \to G_3$ has finite image, and hence
that $G_3 = 0$.
\end{proof}

\begin{proposition}\label{prop:proj}

\begin{enumerate}

\item[{\rm (i)}]  
The functor of maximal tori yields an exact functor 
\[ \uT : \ucC \longrightarrow \ucT, \] 
right adjoint to the inclusion $\ucT \to \ucC$. Moreover, 
$\uT$ commutes with base change under field extensions.

\item[{\rm (ii)}]  
Every torus is a projective object in $\ucC$.

\end{enumerate}

\end{proposition}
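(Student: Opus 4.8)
The plan is to deduce (ii) formally from (i) together with the semisimplicity of $\ucT$ (Proposition \ref{prop:tss}), and to establish (i) in three steps: exactness of $\uT$, the adjunction, and compatibility with base change. I would model the whole argument on the proof of Proposition \ref{prop:ru}.

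For exactness, I would start from the functor of maximal tori $T : \cC \to \cT$ and compose it with the quotient functor $Q' : \cM \to \cM/\cF\cM$, recalling that $\ucT$ is equivalent to $\cM/\cF\cM$. The resulting functor $\bar T := Q' \circ T : \cC \to \ucT$ kills every finite group (since $T(F)=0$ for $F$ finite) and is exact: given a short exact sequence in $\cC$, Lemma \ref{lem:max} produces a complex of tori whose outer maps are a monomorphism and an epimorphism with finite homology, and applying the exact functor $Q'$ — which annihilates finite groups — turns this into a short exact sequence in $\ucT$. By the universal property of $Q : \cC \to \cC/\cF$, the functor $\bar T$ then factors uniquely through an exact functor $\uT : \ucC \to \ucT$ (using $\ucC \simeq \cC/\cF$). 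Since $T$ commutes with base change under field extensions (Lemma \ref{lem:maxt}(iii)), the uniqueness in this factorization forces $\uT$ to commute with $\uotimes_k k'$ as well, exactly as in Proposition \ref{prop:ru}(ii).

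The adjunction is the heart of the matter. Here I would exhibit the counit explicitly: for each smooth connected $G$, let $\epsilon_G : \uT(G) = T(G) \to G$ be the class in $\ucC$ of the inclusion of the maximal torus; this is natural in $G$ by Lemma \ref{lem:maxt}(ii). By the standard characterization of adjoints, it then suffices to check that for every torus $T'$ the map $\phi \mapsto \epsilon_G \circ \phi$ is a bijection $\Hom_{\ucT}(T', T(G)) \to \Hom_{\ucC}(T', G)$. Injectivity is immediate, because $\epsilon_G$ is a monomorphism in $\ucC$: the inclusion $T(G) \hookrightarrow G$ has trivial kernel, so Lemma \ref{lem:equi}(iii) applies. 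For surjectivity I would represent a given $\ucC$-morphism $T' \to G$ by a $\cC$-morphism $f : T' \to G/G'$ with $G'$ finite; its image is a torus, hence lands in $T(G/G')$, so $f$ factors through $T(G/G')$. The key point is that the map $T(G) \to T(G/G')$ induced by $G \to G/G'$ is an isogeny — this follows from Lemma \ref{lem:max} applied to $0 \to G' \to G \to G/G' \to 0$, since $T(G')=0$ — and therefore becomes invertible in $\ucT$; composing $f$ with this inverse produces the desired preimage in $\Hom_{\ucT}(T', T(G))$, and a short diagram chase identifying the quotient map $G \to G/G'$ with its inverse in $\ucC$ confirms that $\epsilon_G$ carries it back to the original morphism. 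I expect this surjectivity — keeping track of the several isogenies and of the identifications they induce in the isogeny category — to be the main obstacle.

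Finally, for (ii), the adjunction gives a natural isomorphism $\Hom_{\ucC}(T', -) \cong \Hom_{\ucT}(T', \uT(-))$. Since $\uT$ is exact by (i) and $\ucT$ is semisimple by Proposition \ref{prop:tss}, so that $\Hom_{\ucT}(T', -)$ is exact, the composite functor $\Hom_{\ucC}(T', -)$ is exact; hence $T'$ is a projective object of $\ucC$, just as in Proposition \ref{prop:ru}(iii).
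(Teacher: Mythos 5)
Your proposal is correct and follows essentially the same route as the paper: exactness of $\uT$ via Lemma \ref{lem:max} and the universal property of the quotient functor, the adjunction by transporting the isomorphism $\Hom_{\cC}(T',G)\cong\Hom_{\cT}(T',T(G))$ to the isogeny categories, and projectivity from exactness of $\uT$ plus semisimplicity of $\ucT$, exactly as in Proposition \ref{prop:ru}(iii). The only difference is presentational: your explicit counit and the surjectivity check (factoring a representative $f:T'\to G/G'$ through $T(G/G')$ and inverting the isogeny $T(G)\to T(G/G')$) is precisely what the paper compresses into ``by using Lemmas \ref{lem:equi} and \ref{lem:max}.''
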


\begin{proof}
(i) Composing $T$ with the functor $\cT \to \ucT$ induced by
the quotient functor $Q$, we obtain an exact functor $\cC \to \ucT$ 
(Lemma \ref{lem:max}), which sends every finite group to $0$.
This yields an exact functor $\uT : \ucC \to \ucT$.
The adjointness assertion follows from the natural isomorphism
\[ \Hom_{\cC}(T,G) \cong \Hom_{\cT}(T,T(G)) \]
for any torus $T$ and any algebraic group $G$,
which yields a natural isomorphism
\[ \Hom_{\ucC}(T,G) \cong \Hom_{\ucT}(T,T(G)) \]
by using Lemmas \ref{lem:equi} and \ref{lem:max}.
Finally, the assertion on field extensions is a direct
consequence of Lemma \ref{lem:maxt}. 

(ii) This follows by arguing as in the proof of Proposition
\ref{prop:ru} (iii).
\end{proof}

\subsection{Abelian varieties}
\label{subsec:av}

Denote by $\cA$ (resp.~$\cP$) the full subcategory of $\cC$ 
with objects the abelian varieties (resp.~the proper groups, i.e., 
those algebraic groups $G$ such that the structure map
$G \to \Spec(k)$ is proper). Like the categorye of tori, 
$\cA$ is stable under taking quotients and extensions, but not 
subobjects; so $\cA$ is an additive subcategory of $\cC$, but not
an abelian subcategory. Also, $\cP$ is stable under taking subobjects, 
quotients and extensions; it also contains the category $\cF$ of 
finite groups. We may thus form the quotient abelian category 
$\cP/\cF$, which is a full subcategory of $\cC/\cF$.

Next, let $\ucA$ be the full subcategory of $\cP/\cF$ with objects
the abelian varieties. As in \S \ref{subsec:t}, the inclusion of $\ucA$ 
in $\cP/\cF$ is an equivalence of categories, and the remaining 
statements of Lemma \ref{lem:equi} adapt to this setting. Also,
Proposition \ref{prop:homdiv} yields natural isomorphisms 
\[ \bQ \otimes_\bZ \Hom_\cC(A_1,A_2) \stackrel{\cong}{\longrightarrow}
\Hom_{\ucA}(A_1,A_2) \]
for any abelian varieties $A_1,A_2$. Since the abelian group
$\Hom_\cC(A_1,A_2)$ has finite rank (see \cite[Thm.~12.5]{Milne86}),
$\Hom_{\ucA}(A_1,A_2)$ is a finite-dimensional $\bQ$-vector space. 
Moreover, the category $\ucA$ is semi-simple, in view of the Poincar\'e 
complete reducibility theorem (which holds over an arbitrary field, 
see \cite[Cor.~3.20]{Conrad} or \cite[Cor.~4.2.6]{Brion-II}).

\begin{lemma}\label{lem:albq}

\begin{enumerate}

\item[{\rm (i)}] 
Every smooth connected algebraic group $G$ has a largest 
abelian variety quotient, 
\[ \alpha = \alpha_G : G \longrightarrow A(G). \]
Moreover, $\Ker(\alpha)$ is linear and connected. 

\item[{\rm (ii)}] 
Every morphism $u : G \to H$, where $H$ is smooth and connected, 
induces a unique morphism $A(u) : A(G) \to A(H)$ such that 
the square
\[ \CD
G @>{u}>> H \\
@V{\alpha_G}VV @V{\alpha_H}VV \\
A(G) @>{A(u)}>> A(H)\\
\endCD \]
commutes. 

\item[{\rm (iii)}] For any field extension $k'$ of $k$, 
the natural morphism $A(G_{k'}) \to A(G)_{k'}$ is an isomorphism
if $\car(k) = 0$, and an isogeny if $\car(k) > 0$. 

\end{enumerate}

\end{lemma}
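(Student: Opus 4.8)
The plan is to establish the three assertions in sequence, building the largest abelian variety quotient out of the Chevalley decomposition and then checking functoriality and base-change behaviour.

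For part (i), I would start from Theorem \ref{thm:che}: since $G$ is smooth and connected, there is a smallest connected linear subgroup $L = L(G)$ with $G/L(G)$ an abelian variety. I would define $A(G) := G/L(G)$ and let $\alpha = \alpha_G$ be the quotient map; by construction $\Ker(\alpha) = L(G)$ is linear and connected, which is the second claim. To see that this is the \emph{largest} abelian variety quotient, suppose $\beta : G \to B$ is any morphism to an abelian variety $B$. By Theorem \ref{thm:che}(iii), $\beta$ factors uniquely through $G \to G/L(G) = A(G)$, so every abelian variety quotient of $G$ is dominated by $\alpha_G$. In particular any abelian variety quotient is a quotient of $A(G)$, establishing maximality.

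For part (ii), given $u : G \to H$, I would compose with $\alpha_H$ to get a morphism $\alpha_H \circ u : G \to A(H)$ into an abelian variety. By the universal property just established (i.e.\ Theorem \ref{thm:che}(iii) applied to $G$), this factors uniquely through $\alpha_G$, yielding a unique $A(u) : A(G) \to A(H)$ making the square commute. Uniqueness of $A(u)$ is exactly the uniqueness in the factorization, and functoriality ($A(\id) = \id$, $A(v \circ u) = A(v) \circ A(u)$) follows formally from that same uniqueness.

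The main obstacle is part (iii), the base-change comparison. The natural morphism $A(G_{k'}) \to A(G)_{k'}$ arises because $L(G)_{k'}$ is a connected linear subgroup of $G_{k'}$ with abelian variety quotient $A(G)_{k'}$, so by minimality $L(G_{k'}) \subseteq L(G)_{k'}$, inducing the surjection on the abelian variety quotients. The content is to control $L(G)_{k'}/L(G_{k'})$, i.e.\ to compare the two smallest linear subgroups. In characteristic $0$ one expects $L(G)_{k'} = L(G_{k'})$ on the nose: the point is that $L(G)_{k'}$ is already the smallest connected linear subgroup of $G_{k'}$ with abelian variety quotient, which one can check using that the formation of the linear part is compatible with field extension here (the structural decomposition of Theorem \ref{thm:zero}, together with Lemma \ref{lem:maxt}(iii) for the toral part and the fact that unipotent and multiplicative-type parts and abelian varieties are each stable under base change), so the comparison map is an isomorphism. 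In characteristic $p > 0$ this fails to be an isomorphism in general because $L(G_{k'})$ may be strictly smaller than $L(G)_{k'}$ by an infinitesimal amount; however the discrepancy $L(G)_{k'}/L(G_{k'})$ is linear and, being squeezed, is finite, so $A(G_{k'}) \to A(G)_{k'}$ is an isogeny. I would make the finiteness precise by showing that the kernel of the comparison map is a connected linear subgroup of the abelian variety $A(G)_{k'}$, hence finite by Proposition \ref{prop:hom}(ii) (a linear subgroup of an abelian variety is finite), which forces the map to be an isogeny.
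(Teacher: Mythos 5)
Your proposal is correct and follows essentially the same route as the paper: parts (i) and (ii) are read off from Theorem \ref{thm:che} exactly as you describe, and for part (iii) the paper likewise compares $L(G)_{k'}$ with $L(G_{k'})$, the kernel of $A(G_{k'}) \to A(G)_{k'}$ being the connected affine group $L(G)_{k'}/L(G_{k'})$ sitting inside the abelian variety $A(G_{k'})$ (note: inside $A(G_{k'})$, not $A(G)_{k'}$ as you wrote), hence finite. The only real divergence is the characteristic-$0$ step, where the paper obtains $L(G)_{k'} = L(G_{k'})$ by the cleaner observation that in characteristic $0$ the group $L(G)$ is the \emph{largest} connected linear subgroup of $G$ (because a connected finite subgroup of an abelian variety is trivial there, while in characteristic $p$ it may be infinitesimal), whereas you propose to track the toral and unipotent parts separately through base change via Theorem \ref{thm:zero} and Lemma \ref{lem:maxt}; both work, but the paper's characterization is the most direct way to make your ``formation of the linear part is compatible with field extension'' claim precise.
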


\begin{proof}
(i) and (ii) The assertions are direct consequences of Theorem 
\ref{thm:che} (ii).

(iii) By (i), we have an exact sequence
\[ 0 \longrightarrow L(G) \longrightarrow G 
\stackrel{\alpha}{\longrightarrow} A(G)
\longrightarrow 0, \]
where $L(G)$ is linear and connected.
This yields an exact sequence
\[ 0 \longrightarrow L(G)_{k'} \longrightarrow G_{k'} 
\longrightarrow A(G)_{k'} \longrightarrow 0. \]
Thus, $L(G_{k'}) \subset L(G)_{k'}$ and we obtain an exact sequence
\[ 0 \longrightarrow L(G)_{k'}/L(G_{k'}) \longrightarrow A(G_{k'}) 
\longrightarrow A(G)_{k'} \longrightarrow 0. \]
Sinc $L(G)_{k'}$ is linear, the quotient $L(G)_{k'}/L(G_{k'})$ must be
finite; this yields the assertion when $\car(k) > 0$.

When $\car(k) = 0$, we may characterize $L(G)$ as the largest
connected linear subgroup of $G$. It follows that  
$L(G)_{k'} \subset L(G_{k'})$; hence equality holds, and 
$A(G_{k'}) \stackrel{\cong}{\rightarrow} A(G)_{k'}$.
\end{proof}

\begin{remarks}\label{rem:albq}
(i) An arbitrary algebraic group $G$ may admit no largest
abelian variety quotient, as shown by the following
variant of \cite[Ex.~4.3.8]{Brion-II}: let $A$ be a non-zero 
abelian variety, and choose an integer $n \geq 2$. Let 
\[ G := (A \times A[n^2])/ \diag(A[n]), \]
where $A[n]$ is viewed as a subgroup of $A[n^2]$. Consider
the subgroups of $G$
\[ H_1 :=  (A[n] \times A[n^2])/ \diag(A[n]), \quad
H_2 := \diag(A[n^2])/ \diag(A[n]). \]
Then $G/H_1$, $G/H_2$ are both isomorphic to $A$. Also,
$H_1 \cap H_2 = 0$, and $G$ is not an abelian variety. 

(ii) The assignment $G \mapsto A(G)$ does not preserve exactness
of sequences of smooth connected algebraic groups. For example,
consider an elliptic curve $E$ equipped with a $k$-rational
point of prime order $\ell \geq 2$. Assume that $k$ contains
a nontrivial $\ell$th root of unity; this identifies $\mu_\ell$
with the constant group scheme $\bZ/\ell \bZ$. Consider the
quotient 
\[ G := (E \times \bG_m)/\diag(\mu_\ell), \] 
with an obvious notation. Then $G$ is a smooth connected algebraic 
group, which lies in an exact sequence
\[ 0 \longrightarrow E \longrightarrow G \longrightarrow
\bG_m \longrightarrow 0. \]
Moreover, the induced map $A(E) \to A(G)$ is just the
quotient map $E \to E/\mu_\ell$.
\end{remarks}

We now show that the assignment $G \mapsto A(G)$ is exact up 
to finite groups:

\begin{lemma}\label{lem:alb}
Consider an exact sequence in $\cC$
\[ 0 \longrightarrow G_1 
\stackrel{u}{\longrightarrow} G_2
\stackrel{v}{\longrightarrow} G_3 \longrightarrow 0 , \]
where $G_1,G_2,G_3$ are smooth and connected. Then we have 
a commutative diagram in $\cC$
\[ \CD
0 @>>> G_1 @>{u}>> G_2 @>{v}>> G_3 @>>> 0 \\
& & @V{\alpha_1}VV @V{\alpha_2}VV @V{\alpha_3}VV \\
0 @>>> A(G_1) @>{A(u)}>> A(G_2) @>{A(v)}>> A(G_3) @>>> 0, \\
\endCD \]
where $A(v)$ is an epimorphism, and $\Ker \, A(u)$, 
$\Ker \, A(v)/\Im \, A(u)$ are finite.
\end{lemma}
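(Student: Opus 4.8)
The plan is to fit the desired diagram into a $3 \times 3$ array of Chevalley--Albanese sequences and read off everything from the associated long exact cohomology sequence, with the finiteness assertions coming from a proper-versus-affine dichotomy. First, for each $i$ the map $\alpha_i$ sits in an exact sequence
\[ 0 \longrightarrow L(G_i) \longrightarrow G_i
\stackrel{\alpha_i}{\longrightarrow} A(G_i) \longrightarrow 0, \]
with $L(G_i) = \Ker(\alpha_i)$ linear and connected (Lemma \ref{lem:albq}). The universal property of Lemma \ref{lem:albq}(ii) produces the vertical maps $A(u), A(v)$ and the commuting squares; the same functoriality shows that $u$ carries $L(G_1)$ into $L(G_2)$ and $v$ carries $L(G_2)$ into $L(G_3)$, giving induced morphisms $L(u), L(v)$. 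Since $v \circ u = 0$, we get $A(v) \circ A(u) = A(v \circ u) = 0$, so the bottom row is a complex.

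Next I would regard the three horizontal sequences as a short exact sequence of complexes $0 \to L_\bullet \to G_\bullet \to A_\bullet \to 0$, where $L_\bullet, G_\bullet, A_\bullet$ have terms $L(G_i), G_i, A(G_i)$ in degree $i$ and differentials induced by $u$ and $v$. Because the given sequence $0 \to G_1 \to G_2 \to G_3 \to 0$ is exact, the middle complex $G_\bullet$ is acyclic, so every connecting map in the long exact cohomology sequence is an isomorphism. Reading off degrees $1,2,3$ this yields
\[ \Ker A(u) \cong \Ker L(v)/\Im L(u), \quad
\Ker A(v)/\Im A(u) \cong \Coker L(v), \quad
\Coker A(v) \cong 0, \]
the last isomorphism giving immediately that $A(v)$ is an epimorphism.

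The crux is then that each of the first two groups is \emph{simultaneously} affine and proper. Indeed $\Ker A(u)$ (resp.~$\Ker A(v)/\Im A(u)$) is a closed subgroup (resp.~subquotient) of the abelian variety $A(G_1)$ (resp.~$A(G_2)$), hence proper; while through the displayed isomorphisms it is identified with $\Ker L(v)/\Im L(u)$ (resp.~$\Coker L(v)$), a subquotient of the linear groups $L(G_i)$, hence affine. A group scheme that is both affine and proper is finite, which is exactly the claim. This is the same mechanism already used for the maximal torus in Lemma \ref{lem:max}.

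The step I expect to require the most care is the homological bookkeeping: one must check that the connecting homomorphisms are genuine isomorphisms of algebraic groups (so that the affine/proper transfer is legitimate) and that each row is honestly short exact in $\cC$ --- but the latter rests only on $L(G_i) = \Ker(\alpha_i)$ together with $\alpha_i$ being an epimorphism. Once these identifications are in place, the proper-versus-affine dichotomy does all the real work.
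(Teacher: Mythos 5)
Your argument is correct and is in substance the paper's own proof: the paper likewise identifies $\Ker \, A(u)$ with $u^{-1}(L_2)/L_1$ and $\Ker \, A(v)/\Im \, A(u)$ with a quotient of $L_3$ (the same groups your connecting isomorphisms produce), and concludes by the same affine-and-proper-hence-finite dichotomy. The only difference is presentational: you package the identifications as the long exact cohomology sequence of the short exact sequence of complexes $0 \to L_\bullet \to G_\bullet \to A_\bullet \to 0$, whereas the paper computes the same subquotients by hand.
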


\begin{proof}
Clearly, $A(v)$ is an epimorphism. Let 
$L_i := \Ker(\alpha_i)$ for $i = 1,2,3$; then each $L_i$ 
is connected and linear by Theorem \ref{thm:che}. 
We have isomorphisms $\Ker \, A(u) \cong u^{-1}(L_2)/L_1$,
$\Im \, A(u) \cong u(G_1)/u(G_1) \cap L_2$ and 
$\Ker \, A(v) \cong v^{-1}(L_3)/L_2$. Since $u^{-1}(L_2)$
is linear, $\Ker \, A(u)$ is linear as well; it is also proper, 
and hence finite. Also, 
\[ \Ker \, A(v)/\Im \, A(u) \cong v^{-1}(L_3)/L_2 + u(G_1) \]
is a quotient of $v^{-1}(L_3)/u(G_1) \cong L_3$. It follows similarly
that $\Ker \, A(v)/\Im \, A(u)$ is finite.
\end{proof}

Next, we obtain a dual version of Proposition \ref{prop:proj}:

\begin{proposition}\label{prop:inj}

\begin{enumerate}

\item[{\rm (i)}]  
The Albanese functor yields an exact functor 
\[ \uA : \ucC \longrightarrow \ucA, \] 
which is left adjoint to the inclusion $\ucA \to \ucC$. Moreover, 
$\uA$ commutes with base change under field extensions.

\item[{\rm (ii)}]  
Every abelian variety is an injective object in $\ucC$.

\end{enumerate}

\end{proposition}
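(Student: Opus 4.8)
The plan is to mirror the proof of Proposition \ref{prop:proj}, with the functor of maximal tori replaced by the Albanese functor and the roles of sub- and quotient-objects interchanged, so that $\uA$ will be a \emph{left} adjoint and abelian varieties will turn out to be \emph{injective} rather than projective.

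For (i), I first define $\uA$ on $\ucC$. On objects I set $\uA(G) := A(G)$, the largest abelian variety quotient provided by Lemma \ref{lem:albq}(i); this makes sense because every object of $\ucC$ is smooth and connected. To extend $\uA$ to morphisms, recall that a $\ucC$-morphism $\uf : G \to H$ is represented by a $\cC$-morphism $f : G \to H/H'$ with $H'$ finite and $H/H'$ again smooth and connected. Lemma \ref{lem:albq}(ii) yields $A(f) : A(G) \to A(H/H')$, while the quotient $H \to H/H'$ is an isogeny and hence induces an isomorphism $A(H) \to A(H/H')$ in $\ucA$ (its Albanese image is an epimorphism of abelian varieties of equal dimension, so an isogeny); composing gives a morphism $A(G) \to A(H)$ in $\ucA$, independent of the chosen representative by passing to a common refinement. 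This is the exact analogue of the morphism description in Lemma \ref{lem:equi}.

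The exactness of $\uA$ is where the real work lies, and it is the step I expect to be the main obstacle, since the Albanese is only defined on smooth connected groups and so I cannot simply apply the exactness lemma to an arbitrary sequence in $\cC$ as was done with Lemma \ref{lem:max}. Instead, given a short exact sequence in $\ucC$, I would first invoke Proposition \ref{prop:exa} to replace it, up to isomorphism in $\ucC$, by a genuine short exact sequence $0 \to G_1 \to G_2 \to G_3 \to 0$ of smooth connected groups in $\cC$, with injection $u$ and surjection $v$. Applying Lemma \ref{lem:alb} then produces a complex $0 \to A(G_1) \to A(G_2) \to A(G_3) \to 0$ in which $A(v)$ is an epimorphism and both $\Ker A(u)$ and $\Ker A(v)/\Im A(u)$ are finite, which is precisely a short exact sequence in $\ucA$. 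The adjointness is then formal: for $G$ smooth connected and $A$ an abelian variety, Theorem \ref{thm:che}(iii) gives $\Hom_\cC(G, A/A') \cong \Hom_\cA(A(G), A/A')$ for every finite subgroup $A' \subset A$, and passing to the direct limit over such $A'$ (Lemma \ref{lem:equi}) yields a natural isomorphism $\Hom_{\ucC}(G, A) \cong \Hom_{\ucA}(\uA(G), A)$, so $\uA$ is left adjoint to the inclusion $\ucA \to \ucC$. Compatibility with base change follows immediately from Lemma \ref{lem:albq}(iii), as the comparison $A(G_{k'}) \to A(G)_{k'}$ is an isogeny and hence an isomorphism in $\ucA_{k'}$.

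For (ii), I would argue exactly as in Proposition \ref{prop:ru}(iii). Fix an abelian variety $A$. By the adjointness just established there is a natural isomorphism $\Hom_{\ucC}(-, A) \cong \Hom_{\ucA}(\uA(-), A)$ of contravariant functors on $\ucC$. The functor $\uA$ is exact by (i), and $\Hom_{\ucA}(-, A)$ is exact because $\ucA$ is semi-simple; hence their composite is exact, which is to say that $\Hom_{\ucC}(-, A)$ is exact. Therefore $A$ is an injective object of $\ucC$.
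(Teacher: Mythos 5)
Your proof is correct and follows essentially the same route as the paper: define $\uA$ on a representative $f: G \to H/H'$ via $A(f)$ and the isogeny $A(H) \to A(H/H')$, get exactness from Proposition \ref{prop:exa} combined with Lemma \ref{lem:alb}, deduce adjointness from Theorem \ref{thm:che}(iii) by passing to the isogeny category (the paper invokes Proposition \ref{prop:homdiv} where you take the direct limit over finite subgroups, an immaterial difference), and then conclude injectivity formally from the exact left adjoint together with the semi-simplicity of $\ucA$.
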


\begin{proof}
(i) Consider a $\ucC$-morphism $\uf : G \to H$ and choose 
a representative by a $\cC$-morphism $f : G \to H/H'$,
where $H'$ is a finite subgroup of $H$. This yields an
$\cA$-morphism
\[ A(f): A(G) \longrightarrow A(H/H') = A(H)/\alpha_H(H'), \]
and hence an $\ucA$-morphism 
$\uA (f) : A(G) \to A(H)$. One may readily check that
$\uA(f)$ depends only on $\uf$, in a covariant way. By 
Proposition \ref{prop:exa} and Lemma \ref{lem:albq}, the resulting
functor $\uA$ is exact and commutes with base change under field
extensions.

To show the adjointness assertion, consider a smooth connected
algebraic group $G$ and an abelian variety $A$. Then the map
\[ \Hom_{\cA}(A(G),A) \longrightarrow \Hom_{\cC}(G,A), \quad
f \longmapsto f \circ \alpha_G \]
is an isomorphism by Lemma \ref{lem:albq} again. In view of
Proposition \ref{prop:homdiv}, it follows that the analogous map 
\[ \Hom_{\ucA}(A(G),A) \longrightarrow \Hom_{\ucC}(G,A) \]
is an isomorphism as well.

(ii) This follows formally from (i).
\end{proof}

\begin{remarks}\label{rem:av}
(i) Denote by $\wA$ the dual of an abelian variety $A$. Then the
assignement $A \mapsto \wA$ yields a contravariant endofunctor of
$\cA$, which is involutive and preserves isogenies and finite
products. As an easy consequence, we obtain a contravariant endofunctor 
of $\ucA$, which is involutive and exact. Note that each abelian
variety is (non-canonically) $\ucA$-isomorphic to its dual, via the
choice of a polarization.

(ii) Let $k'$ be a field extension of $k$. Then the assignement
\[ A \longmapsto \bQ \otimes_{\bZ} A(k') =: A(k')_{\bQ} \]
yields a functor from $\cA$ to the category of $\bQ$-vector spaces
(possibly of infinite dimension), which preserves finite products. 
Moreover, each isogeny 
\[ f: A \longrightarrow B \] 
yields an isomorphism
\[ A(k')_{\bQ} \stackrel{\cong}{\longrightarrow} B(k')_{\bQ}, \] 
since this holds for the multiplication maps $n_A$. Thus, 
the above assignement yields an exact functor from $\ucA$ to 
the category of $\bQ$-vector spaces.
\end{remarks}

\subsection{Vanishing of extension groups}
\label{subsec:ves}

In this subsection, we prove the assertion (v) of Theorem 
\ref{thm:all}: $\hd(\ucC) = 1$. We first collect general 
vanishing results for extension groups in $\ucC$:

\begin{lemma}\label{lem:list}
Let $G$ be a smooth connected algebraic group, $U$ a smooth
connected unipotent group, $A$ an abelian variety, and $T$ a torus.

\begin{enumerate}

\item[{\rm (i)}] $\Ext_{\ucC}^n(T,G) = 0 = \Ext_{\ucC}^n(G,A) = 0$ 
for all $n \geq 1$.

\item[{\rm (ii)}] If $\car(k) = 0$, then $\Ext_{\ucC}^n(U,G) = 0$
for all $n \geq 1$.

\item[{\rm (iii)}] If $\car(k) = p > 0 $ and $G$ is divisible, 
then $\Ext_{\ucC}^n(U,G) = 0 = \Ext_{\ucC}^n(G,U)$ for all $n \geq 0$.

\item[{\rm (iv)}] If $G$ is linear, then $\Ext_{\ucC}^n(G,T) = 0$
for all $n \geq 1$. If in addition $\car(k) = 0$, then
$\Ext_{\ucC}^n(G,U) = 0$ for all $n \geq 1$ as well.

\end{enumerate}

\end{lemma}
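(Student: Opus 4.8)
The plan is to dispatch parts (i) and (ii) immediately from the projectivity and injectivity results already established, to concentrate the real work on part (iii), and then to derive part (iv) by dévissage. For (i), recall that every torus is a projective object of $\ucC$ (Proposition \ref{prop:proj}(ii)) and every abelian variety is an injective object (Proposition \ref{prop:inj}(ii)); hence $\Ext^n_{\ucC}(T,G) = 0$ and $\Ext^n_{\ucC}(G,A) = 0$ for all $n \geq 1$. Likewise, in characteristic $0$ every unipotent group is projective (Proposition \ref{prop:ru}(iii)), which gives (ii) at once.

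For (iii) the key point is a numerical incompatibility. Since $U$ is unipotent and $\car(k) = p > 0$, it is a $p$-group, so $p^N_U = 0$ in $\cC$ — and hence in $\ucC$ — for some $N$. By biadditivity of $\Ext$, the endomorphism of $\Ext^n_{\ucC}(U,G)$ (resp. of $\Ext^n_{\ucC}(G,U)$) induced by $p^N_U$ is multiplication by $p^N$; since $p^N_U = 0$ in $\ucC$, that endomorphism is also zero, so both groups are killed by $p^N$. On the other hand, as $G$ is divisible, $\Ext^n_{\ucC}(U,G)$ is a $\bQ$-vector space by Proposition \ref{prop:homdiv}(i), while $\Ext^n_{\ucC}(G,U)$ is a module over the $\bQ$-algebra $\End_{\ucC}(G)$ and so is a $\bQ$-vector space as well. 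A $\bQ$-vector space killed by $p^N$ is zero, whence both families vanish for all $n \geq 0$ (the case $n = 0$ being covered uniformly).

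For (iv) I would argue by dévissage on the linear group $G$. By Proposition \ref{prop:el}, $G$ admits a finite filtration in $\cC$ whose successive quotients are finite, $\bG_a$, or simple tori (no abelian variety occurs, as $G$ is linear); discarding the finite quotients, this becomes a finite filtration of $G$ in $\ucC$ with simple subquotients $\bG_a$ and simple tori. Chasing the long exact sequences for $\Ext_{\ucC}(-,T)$ attached to the steps of this filtration reduces the claim to the simple subquotients: a simple torus is projective, so its higher $\Ext$ into $T$ vanishes, and $\Ext^n_{\ucC}(\bG_a,T) = 0$ holds by part (iii) when $\car(k) = p > 0$ (as $T$ is divisible) and by projectivity of $\bG_a$ when $\car(k) = 0$. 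This yields $\Ext^n_{\ucC}(G,T) = 0$ for $n \geq 1$. When $\car(k) = 0$ the field is perfect, so Theorem \ref{thm:lin} gives $G = T \times U'$ with $T$ a torus and $U'$ unipotent; both factors are projective in $\ucC$, hence so is $G$, giving $\Ext^n_{\ucC}(G,U) = 0$ for $n \geq 1$ as well.

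The main obstacle is part (iii): everything else reduces either to the projectivity and injectivity statements or to a formal dévissage, whereas (iii) rests on the tension between $U$ being annihilated by a power of $p$ and the relevant $\Ext$ groups being $\bQ$-vector spaces because one argument is divisible.
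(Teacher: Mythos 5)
Your proposal is correct and follows essentially the same route as the paper: (i) and (ii) from projectivity of tori and (in characteristic $0$) unipotent groups and injectivity of abelian varieties, (iii) from the tension between $p^N$-torsion (since $p^N_U=0$) and the $\Ext$ groups being modules over the $\bQ$-algebra $\End_{\ucC}(G)$, and (iv) by dévissage via Proposition \ref{prop:el} to the cases of tori and $\bG_a$. The only cosmetic difference is that in (iv) the paper stops the reduction at ``$G$ unipotent or a torus'' and settles the unipotent case by noting unipotent groups in characteristic $0$ are just vector spaces, whereas you reduce all the way to simple subquotients and invoke Theorem \ref{thm:lin} for the last claim; both are valid.
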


\begin{proof}
(i) Just recall that $T$ is projective in $\ucC$ (Proposition
\ref{prop:proj}), and $A$ is injective in $\ucC$ (Proposition
\ref{prop:inj}).

(ii) Likewise, $U$ is projective in $\ucC$ by Proposition
\ref{prop:ru}.

(iii) Since $U$ is unipotent, there exists a positive integer 
$m$ such that $p^m_G = 0$. It follows that both groups 
$\Ext^m_{\ucC}(U,G)$ and $\Ext^m_{\ucC}(G,U)$ are $p^m$-torsion 
(see e.g. \cite[Lem.~I.3.1]{Oort66}). But these groups are also 
modules over $\End_{\ucC}(G)$, and hence $\bQ$-vector spaces 
by Proposition \ref{prop:homdiv}. This yields the assertion. 

(iv) By Proposition \ref{prop:el} and the long exact sequence for
Ext groups, we may assume that $G$ is unipotent or a torus. 
In the latter case, both assertions follows from (i); in the former
case, the first assertion follows from (ii) and (iii), and the 
second assertion, from the fact that unipotent groups are just
vector spaces. 
\end{proof}

Next, recall that $\hd(\ucC) \geq 1$ in view of Examples 
\ref{ex:nonsplit}. So, to show that $\hd(\ucC) = 1$, 
it suffices to check the following:

\begin{lemma}\label{lem:van}
For any smooth connected algebraic groups $G,H$ and any integer
$n \geq 2$, we have $\Ext_{\ucC}^n(G,H) = 0$.
\end{lemma}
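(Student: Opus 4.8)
The plan is to prove that $\Ext^2_{\ucC}(G,H)=0$ for all $G,H$; since the homological dimension is defined through Yoneda extensions, the vanishing of every $\Ext^2$ group forces $\Ext^n_{\ucC}=0$ for all $n\ge 2$, so it suffices to treat the case $n=2$. First I would reduce to simple $G$ and $H$: by Proposition~\ref{prop:simple} the category $\ucC$ is artinian and noetherian, so every object has a finite filtration with simple subquotients, and the long exact sequences for $\Ext$ in each variable then reduce $\Ext^2_{\ucC}(G,H)=0$ to the case where both $G$ and $H$ are simple, i.e.\ each of them is $\bG_a$, a simple torus $T$, or a simple abelian variety $A$.

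Next I would dispose of most of the nine resulting pairs using the results already in hand. Since tori are projective (Proposition~\ref{prop:proj}) and abelian varieties are injective (Proposition~\ref{prop:inj}), every pair with a torus in the first argument or an abelian variety in the second has vanishing $\Ext^2$. This leaves only $(\bG_a,\bG_a)$, $(\bG_a,T)$, $(A,\bG_a)$ and $(A,T)$, and here I would split on the characteristic. When $\car(k)=0$, unipotent groups are projective (Proposition~\ref{prop:ru}), which kills $(\bG_a,\bG_a)$ and $(\bG_a,T)$, leaving $(A,\bG_a)$ and $(A,T)$. When $\car(k)=p>0$, the tori and abelian varieties are divisible, so Lemma~\ref{lem:list}(iii) kills $(\bG_a,T)$ and $(A,\bG_a)$, leaving $(\bG_a,\bG_a)$ and $(A,T)$.

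The pairs with an abelian variety in the first argument, namely $\Ext^2_{\ucC}(A,T)$ and, in characteristic $0$, $\Ext^2_{\ucC}(A,\bG_a)$, I would handle directly. For $\Ext^2_{\ucC}(A,T)$ the Weil-Barsotti isomorphism describes extensions of $A$ by $\bG_m$ via the dual abelian variety, and combined with the injectivity of abelian varieties and Proposition~\ref{prop:homdiv} this computes the relevant groups; alternatively one passes to the isogeny category $\ucS$ of semi-abelian varieties, which is hereditary. For $\Ext^2_{\ucC}(A,\bG_a)$ in characteristic $0$ I would use the universal vector extension $0\to V\to E\to A\to 0$, with $V$ a projective unipotent group, together with the analysis of the subcategory $\ucV$ of vector extensions of abelian varieties, to shift the computation onto projective objects.

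The main obstacle is the single pair $(\bG_a,\bG_a)$ in characteristic $p$. By Theorem~\ref{thm:insep} the base change along the purely inseparable extension given by the perfect closure $k_i/k$ is an equivalence, so I may assume $k$ perfect. Then I would represent a class of $\Ext^2_{\ucC}(\bG_a,\bG_a)$ by an honest four-term exact sequence in $\cC$, using Proposition~\ref{prop:exa} to lift the $\ucC$-exact sequence to $\cC$ up to isogenies of the outer terms, and reduce the vanishing to the known fact that $\Ext^2_{\cC}(\bG_a,\bG_a)=0$ when $k$ is perfect (the input from Serre and Oort, via Milne). The delicate point is the comparison between extension groups in $\ucC$ and in $\cC$: unlike the divisible case of Proposition~\ref{prop:homdiv}, $\bG_a$ is not divisible in characteristic $p$, so the relation between $\Ext^2_{\cC}(\bG_a,\bG_a)$ and $\Ext^2_{\ucC}(\bG_a,\bG_a)$ must be established by hand, controlling how the Frobenius isogenies that become invertible in $\ucC$ act on the four-term extensions. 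This comparison, together with the cited vanishing over perfect fields, is where the real content of the lemma lies.
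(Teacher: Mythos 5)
Your reduction is exactly the paper's: pass to $n=2$, filter by simple objects, and use projectivity of tori (and of unipotent groups in characteristic $0$), injectivity of abelian varieties, and Lemma \ref{lem:list}(iii) to arrive at the same three critical cases, namely $\Ext^2_{\ucC}(A,T)$, $\Ext^2_{\ucC}(A,\bG_a)$ in characteristic $0$, and $\Ext^2_{\ucC}(\bG_a,\bG_a)$ in characteristic $p$. Your treatment of the last case is also the paper's: after reducing to $k$ perfect via Theorem \ref{thm:insep}, Proposition \ref{prop:exa} replaces the given $4$-term sequence by one coming from $\cU$ up to $\ucU$-isomorphisms of all terms, and $\Ext^2_{\cU}(\bG_a,\bG_a)=0$ finishes it. The ``delicate comparison'' you worry about is not actually needed: one only uses that the exact functor $Q$ carries $\cU$-equivalent sequences to $\ucU$-equivalent ones, i.e.\ functoriality of Yoneda classes in one direction; no injectivity of $\Ext^2_{\cU}\to\Ext^2_{\ucU}$ is required.

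The genuine gap is in the two cases with $A$ in the first argument. None of the routes you propose works as stated: the Weil--Barsotti isomorphism computes $\Ext^1_{\cC}(A,\bG_m)$, not an $\Ext^2$; injectivity of abelian varieties controls $\Ext^n_{\ucC}(-,A)$, not $\Ext^n_{\ucC}(A,-)$; the hereditariness of $\ucS$ is not established anywhere in the paper (and even granted, it would only give $\Ext^2_{\ucS}(A,T)=0$, which does not yield $\Ext^2_{\ucC}(A,T)=0$ without showing that every $2$-extension in $\ucC$ is equivalent to one with semi-abelian middle terms); and $E(A)$ is projective only in $\ucV$ (Corollary \ref{cor:vec}), not in $\ucC$ --- indeed $\Ext^1_{\ucC}(E(A),\bG_m)\cong\Ext^1_{\ucC}(A,\bG_m)\cong \bQ\otimes_{\bZ}\wA(k)$ is nonzero in general --- so shifting $\Ext^2_{\ucC}(A,\bG_a)$ onto $E(A)$ gains nothing. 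The missing idea is the paper's d\'evissage (adapted from Oort): represent $\eta$ by $0\to T\to G_1\to G_2\stackrel{w}{\to} A\to 0$ and cut it at $K=\Ker(w)$, so $\eta=\eta_1\cup\eta_2$ with $\eta_1\in\Ext^1_{\ucC}(K,T)$; since $\ucA$ is semi-simple, the epimorphism $\uA(w)$ admits a section $s$, and replacing $G_2$ by the preimage of $s(A)$ under the Albanese map changes the representative so that $\uA(w)$ becomes an isomorphism, hence $K$ becomes linear by Theorem \ref{thm:che}; then $\eta_1=0$ because $\Ext^1_{\ucC}(K,T)=0$ for $K$ linear (Lemma \ref{lem:list}(iv)), and the same argument with $T$ replaced by $\bG_a$ settles $\Ext^2_{\ucC}(A,\bG_a)$ in characteristic $0$. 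Without this step your proof does not close.
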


\begin{proof}
Let $\eta \in \Ext_{\ucC}^n(G,H)$, where $n \geq 3$.
Then $\eta$ is represented by an exact sequence in $\ucC$
\[ 0 \longrightarrow H \longrightarrow G_1 \longrightarrow 
\cdots \longrightarrow G_n \longrightarrow G \longrightarrow 0, \]
which we may cut into two exact sequences in $\ucC$
\[ 0 \longrightarrow H \longrightarrow G_1 \longrightarrow G_2 
\longrightarrow K \longrightarrow 0,
\quad
0 \longrightarrow K \longrightarrow G_3 \longrightarrow 
\cdots \longrightarrow G_n \longrightarrow G \longrightarrow 0. \]
Thus, $\eta$ can be written as a Yoneda product 
$\eta_1 \cup \eta_2$, where $\eta_1 \in \Ext^2_{\ucC}(G,K)$ and 
$\eta_2 \in \Ext^{n-2}_{\ucC}(K,H)$. So it suffices to show 
the assertion when $n = 2$. 

Using the long exact sequences for $\Ext$ groups, we may 
further reduce to the case where $G,H$ are simple objects 
in $\ucC$, i.e., $\bG_a$, simple tori $T$, or simple abelian 
varieties $A$ (Proposition \ref{prop:simple}). In view of
Lemma \ref{lem:list}, it suffices in turn to check that

\begin{enumerate}

\item[(i)]
$\Ext^2_{\ucC}(A,T) = 0$, 

\item[(ii)]
$\Ext^2_{\ucC}(A,\bG_a) = 0$ when $\car(k) = 0$, 

\item[(iii)]
$\Ext^2_{\ucC}(\bG_a,\bG_a) = 0$ when $\car(k) > 0$.

\end{enumerate}

For (i), we adapt the argument of \cite[Prop.~II.12.3]{Oort66}.
Let $\eta \in \Ext^2_{\ucC}(A,T)$ be represented by an exact
sequence in $\ucC$
\[ 0 \longrightarrow T \stackrel{u}{\longrightarrow} G_1 
\stackrel{v}{\longrightarrow} G_2 
\stackrel{w}{\longrightarrow} A \longrightarrow 0. \]
As above, $\eta = \eta_1 \cup \eta_2$, where $\eta_1$ denotes
the class of the extension 
\[ 0 \longrightarrow T \stackrel{u}{\longrightarrow} G_1 
\longrightarrow K \longrightarrow 0, \]
and $\eta_2$ that of the extension
\[ 0 \longrightarrow K \longrightarrow G_2 
\stackrel{w}{\longrightarrow} A \to 0. \]
Also, note that $\uA(w) : \uA(G_2) \to A$ is an epimorphism in
$\ucA$, and hence has a section, say $s$. Denoting by 
$H_2$ the pull-back of $s(A)$ under the Albanese morphism
$G_2 \to \uA(G_2)$, we have a monomorphism $\iota : H_2 \to G_2$
in $\ucC$, such that $\uA(w \circ \iota) : \uA(H_2) \to A$
is an isomorphism. This yields a commutative diagram of exact 
sequences in $\ucC$
\[ \CD
0 @>>> T @>>> H_1 @>>> H_2 @>>> A @>>> 0 \\
& & @V{\id}VV @VVV  @V{\iota}VV @V{\id}VV \\
0 @>>> T @>{u}>> G_1 @>{v}>> G_2 @>{w}>> A @>>> 0. \\
\endCD \]
Thus, $\eta$ is represented by the top exact sequence,
and hence we may assume that $\uA(w)$ is an isomorphism.
Then $K$ is linear in view of Theorem \ref{thm:che}.
Thus, $\kappa_1 = 0$ by Lemma \ref{lem:list} (iv). So
$\eta = 0$; this completes the proof of (i).

For (ii), we replace $T$ with $\bG_a$ in the above argument,
and use the vanishing of $\Ext^1_{\ucC}(L,\bG_a)$ for $L$ linear 
(Lemma \ref{lem:list} (iv) again).

Finally, for (iii), it suffices to show that 
$\Ext^2_{\ucU}(\bG_a,\bG_a) = 0$. 
Also, we may assume that $k$ is perfect, in view of 
Theorem \ref{thm:insep}. Let $\eta \in \Ext^2_{\ucU}(\bG_a,\bG_a)$
be represented by an exact sequence in $\ucU$,
\[ 0 \longrightarrow \bG_a \longrightarrow G_1 
\longrightarrow G_2 \longrightarrow \bG_a \longrightarrow 0. \]
Then Proposition \ref{prop:exa} yields an exact sequence in $\cU$,
\[ 0 \longrightarrow \bG_a \longrightarrow H_1 
\longrightarrow H_2 \longrightarrow \bG_a \longrightarrow 0, \]
and a commutative diagram in $\ucU$, 
\[ \CD
0 @>>> \bG_a @>>> G_1 @>>> G_2 @>>> \bG_a @>>> 0 \\
& & @VVV @VVV  @VVV @VVV \\
0 @>>> \bG_a @>>> H_1 @>>> H_2 @>>> \bG_a @>>> 0. \\
\endCD \]
where the vertical arrows are isomorphisms in $\ucU$ (here we use 
the fact that the quotient of  $\bG_a$ by a finite subgroup is 
isomorphic to $\bG_a$). Since $\Ext^2_{\cU}(\bG_a,\bG_a) = 0$ 
(see \cite[V.1.5.1, V.1.5.2]{DG}), the bottom exqct sequence is
equivalent to the trivial exact sequence in $\cU$, and hence
in $\ucU$. Thus $\eta = 0$ as desired.
\end{proof}

\begin{remark}\label{rem:van}
When $k$ is perfect, the groups $\Ext^n_{\cC}(G,H)$ 
are torsion for all $n \geq 2$ and all algebraic groups $G,H$,
in view of \cite[Cor., p.~439]{Milne70}. In fact, this assertion extends
to an arbitrary field $k$: indeed, it clearly holds 
when $G$ or $H$ is finite, or more generally $m$-torsion for 
some positive integer $m$. Using Proposition \ref{prop:el}, 
one may thus reduce to the case when $G,H$ are simple 
objects of $\ucC$. Then the assertion is obtained by combining 
Proposition \ref{prop:homdiv}, Lemma \ref{lem:list}, and 
the proof of Lemma \ref{lem:van}.

\end{remark}

\section{Structure of isogeny categories}
\label{sec:sic}

\subsection{Vector extensions of abelian varieties}
\label{subsec:veav}

In this subsection, we assume that $\car(k) = 0$. 
Recall that a \emph{vector extension} of an abelian variety $A$ 
is an algebraic group $G$ that lies in an extension
\[ \xi : \quad 0 \longrightarrow U \longrightarrow G 
\longrightarrow A \longrightarrow 0, \]
where $U$ is unipotent. Then $U = R_u(G)$ and $A = A(G)$
are uniquely determined by $G$; also, the extension $\xi$ has 
no non-trivial automorphisms, since $\Hom_\cC(A,U) = 0$. Thus, 
the data of the algebraic group $G$ and the extension $\xi$ 
are equivalent.

We denote by $\cV$ the full subcategory of $\cC$ with objects 
the vector extensions (of all abelian varieties). By Theorems
\ref{thm:che} and \ref{thm:lin}, the objects of $\cV$ are exactly 
those smooth connected algebraic groups that admit no non-zero
subtorus. In view of Lemmas \ref{lem:maxt} and \ref{lem:max}, 
this readily implies:

\begin{lemma}\label{lem:vec}

\begin{enumerate}

\item[{\rm (i)}] Let $0 \to G_1 \to G_2 \to G_3 \to 0$
be an exact sequence in $\cC$, where $G_2$ is connected. 
Then $G_2$ is an object of $\cV$ if and only if so are $G_3$ and $G_1^0$.

\item[{\rm (ii)}] Let $f : G \to H$ be an isogeny of connected 
algebraic groups. Then $G$ is an object of $\cV$ if and only if 
so is $H$.

\item[{\rm (iii)}] Let $k'$ be a field extension of $k$, and $G$
an algebraic $k$-group. Then $G$ is an object of $\cV_k$ if and only
if $G_{k'}$ is an object of $\cV_{k'}$.
\end{enumerate}

\end{lemma}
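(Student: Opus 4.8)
The plan is to prove the three assertions in order, using the characterization (established just before the statement) that the objects of $\cV$ are exactly those smooth connected algebraic groups admitting no non-zero subtorus, together with the behaviour of the maximal-torus functor under exact sequences (Lemmas \ref{lem:maxt} and \ref{lem:max}). The common thread is that $G$ lies in $\cV$ if and only if $G$ is smooth connected and $T(G) = 0$. So in each part I reduce the claim to a statement about maximal tori.

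For (i), I would argue as follows. Since $G_2$ is connected and the exact sequence realizes $G_3$ as a quotient of $G_2$ and $G_1$ as a subgroup, $G_3$ is automatically connected, and $G_1^0$ is smooth connected by construction. First suppose $G_2 \in \cV$, i.e.\ $G_2$ is smooth connected with $T(G_2) = 0$. Then $G_3$ is smooth connected; by Lemma \ref{lem:maxt}(ii) the image of $T(G_2)$ under $G_2 \to G_3$ is a torus, but more directly $T(G_3)$ is a quotient-image situation controlled by Lemma \ref{lem:max}, which gives that $T(G_2) \to T(G_3)$ has finite cokernel; since $T(G_2) = 0$, the torus $T(G_3)$ is finite, hence zero, so $G_3 \in \cV$. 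Likewise $T(G_1^0) \hookrightarrow T(G_2) = 0$ by the monomorphism part of Lemma \ref{lem:max} (applied to $0 \to G_1 \to G_2 \to G_3 \to 0$ after passing to neutral components), so $G_1^0 \in \cV$. Conversely, if $G_3$ and $G_1^0$ lie in $\cV$, then $T(G_1^0) = 0 = T(G_3)$; applying Lemma \ref{lem:max} again, $\Ker\,T(v)/\Im\,T(u)$ is finite with $T(u)$ mono and $T(v)$ epi, so $T(G_2)$ sits in a sequence squeezed between finite and zero terms, forcing $T(G_2)$ finite and hence zero. Thus $G_2 \in \cV$.

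For (ii), an isogeny $f : G \to H$ of connected groups becomes an isomorphism in $\ucC$, and the maximal-torus functor is compatible with isogenies: $f$ restricts to an isogeny $T(G) \to T(H)$ (its image is a torus by Lemma \ref{lem:maxt}(ii), and the kernel is finite). Hence $T(G) = 0$ if and only if $T(H) = 0$, and smoothness-connectedness is preserved, giving the equivalence. For (iii), the key input is Lemma \ref{lem:maxt}(iii): the formation of the maximal torus commutes with base change under field extensions, so $T(G_{k'}) = T(G)_{k'}$. Therefore $T(G) = 0$ if and only if $T(G_{k'}) = 0$, and since smoothness and connectedness are also preserved under base change (as noted at the start of \S \ref{subsec:fe}), we conclude that $G \in \cV_k$ if and only if $G_{k'} \in \cV_{k'}$.

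I expect the main obstacle to be the bookkeeping in part (i): one must be careful that the relevant groups are genuinely smooth and connected before invoking the maximal-torus characterization, and that the finiteness of $\Ker\,T(v)/\Im\,T(u)$ from Lemma \ref{lem:max} is correctly combined with the vanishing of the outer terms to force $T(G_2) = 0$. Parts (ii) and (iii) are then essentially immediate consequences of the functorial properties already recorded in Lemma \ref{lem:maxt}.
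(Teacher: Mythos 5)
Your proposal is correct and follows exactly the route the paper intends: it reduces each assertion to the vanishing of the maximal torus via the characterization of objects of $\cV$ as smooth connected groups with no non-zero subtorus, and then invokes Lemma \ref{lem:maxt} and Lemma \ref{lem:max} (the paper itself omits the details, saying the lemma "readily" follows from these). The only cosmetic remark is that in part (i) one can note directly that $T(G_1^0)=T(G_1)$ since tori are connected, rather than "passing to neutral components" in the exact sequence.
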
   

In particular, $\cV$ is stable under taking quotients and extensions,
but not subobjects; like $\cT$ and $\cA$, it is an additive
subcategory of $\cC$, but not an abelian subcategory.

Next, recall from \cite{Rosenlicht-II} or \cite[\S 1.9]{MM} that 
every abelian variety $A$ has a universal vector extension,
\[ \xi(A) : \quad 0 \longrightarrow U(A) \longrightarrow E(A) 
\longrightarrow A \longrightarrow 0, \]
where $U(A)$ is the additive group of the vector space
$H^1(A,\cO_A)^*$; moreover, $\dim U(A) = \dim A$. Also,
$E(A)$ is anti-affine, i.e., every morphism from 
$E(A)$ to a linear algebraic group is zero (see e.g. 
\cite[Prop.~5.5.8]{Brion-II}).

\begin{proposition}\label{prop:vec}

\begin{enumerate}

\item[{\rm (i)}] The assignments $A \mapsto E(A)$, $A \mapsto U(A)$ 
yield additive functors
\[ E: \cA \longrightarrow \cV, \quad 
U : \cA \longrightarrow \cU, \] 
which commute with base change under field extensions.

\item[{\rm (ii)}] For any morphism $f: A \to B$ of abelian varieties,
the map $U(f) : U(A) \to U(B)$ is the dual of the pull-back morphism 
$f^* : H^1(B,\cO_B) \to H^1(A,\cO_A)$. Moreover, $U(f)$ is zero 
(resp.~an isomorphism) if and only if $f$ is zero (resp.~an isogeny).

\item[{\rm (iii)}] $E$ is left adjoint to the Albanese functor 
$A: \cV \to \cA$.

\end{enumerate}

\end{proposition}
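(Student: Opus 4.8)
The plan is to reduce everything to a single strengthened universal property of the universal vector extension $\xi(A)$, phrased relative to an arbitrary morphism of abelian varieties. First I would establish: given an abelian variety $A$, an object $G$ of $\cV$ with Albanese quotient $B = A(G)$, and a morphism $\phi \in \Hom_\cA(A,B)$, there is a \emph{unique} morphism $\Phi : E(A) \to G$ lying over $\phi$, i.e.\ making the square with the projections $E(A) \to A$ and $G \to B$ commute. For existence I would pull back the extension $0 \to R_u(G) \to G \to B \to 0$ along $\phi$ to obtain a vector extension of $A$, and apply the defining universal property of $\xi(A)$ (every vector extension of $A$ receives a unique morphism from $E(A)$ over $\id_A$); composing with the projection $\phi^*G \to G$ produces $\Phi$. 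For uniqueness I would use that $E(A)$ is anti-affine: the difference of two lifts over $\phi$ lands in the unipotent, hence linear, kernel $R_u(G)$, and therefore vanishes.

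Granting this, part (i) is formal. For $f : A \to B$ I define $E(f) : E(A) \to E(B)$ as the unique lift of $f$ obtained by applying the universal property with $G = E(B)$, and I set $U(f)$ to be the restriction of $E(f)$ to the kernels of the projections (which $E(f)$ respects, since it lies over $f$). Uniqueness of lifts immediately gives $E(\id_A) = \id_{E(A)}$ and $E(g \circ f) = E(g) \circ E(f)$, and also additivity: both $E(f+f')$ and $E(f) + E(f')$ lie over $f + f'$, because the projection $E(B) \to B$ is a homomorphism, so they coincide; the same holds for $U$. For base change I would note that $H^1(A,\cO_A)$, and hence $\xi(A)$, commutes with field extension, and that the universal property is preserved under $\otimes_k k'$, yielding canonical isomorphisms $E(A)_{k'} \cong E(A_{k'})$ and $U(A)_{k'} \cong U(A_{k'})$ compatible with $E(f)$ and $U(f)$.

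For part (ii) I would unwind the construction of $U(f)$ through the natural identifications $\Ext^1_\cC(A,V) \cong V \otimes_k H^1(A,\cO_A)$, valid for a vector group $V$, under which $\xi(A)$ corresponds to $\id \in \End_k(H^1(A,\cO_A))$. Pullback along $f$ acts as $f^* : H^1(B,\cO_B) \to H^1(A,\cO_A)$ in the second factor, and matching $f^*\xi(B)$ with the pushout $U(f)_*\xi(A)$ identifies $U(f)$ with the $k$-linear dual of $f^*$. The two equivalences then follow from the identification $H^1(A,\cO_A) \cong \Lie(\wA)$, under which $f^*$ becomes $\Lie(\widehat{f})$ for $\widehat{f} : \wB \to \wA$: in characteristic $0$ a homomorphism of abelian varieties vanishes (resp.\ is an isogeny) exactly when its Lie map does (resp.\ is bijective), and duality is faithful and preserves isogenies, so $U(f) = 0$ iff $f = 0$, and $U(f)$ is an isomorphism iff $f$ is an isogeny.

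Finally, part (iii) is essentially a restatement of the universal property. Since $A(E(A)) = A$ (Lemma \ref{lem:albq}), applying the Albanese functor gives a map $\Hom_\cV(E(A),G) \to \Hom_\cA(A, A(G))$, $\Phi \mapsto A(\Phi)$, and the existence and uniqueness established above say precisely that this map is bijective, with inverse $\phi \mapsto \Phi$; naturality in both $A$ and $G$ is routine from uniqueness. I expect the main obstacle to be the bookkeeping in part (ii): making the identification of $U(f)$ with the dual of $f^*$ precise requires care with the isomorphism $\Ext^1_\cC(A,V) \cong V \otimes_k H^1(A,\cO_A)$ and its compatibility with pullback and pushout, and the passage from $f^*$ on cohomology to properties of $f$ relies on the (standard but external) facts about the dual abelian variety and the Lie functor in characteristic $0$. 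By contrast, the anti-affineness of $E(A)$ makes the uniqueness in the universal property, and with it parts (i) and (iii), quite clean.
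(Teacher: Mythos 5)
Your proposal is correct and follows essentially the same route as the paper: existence of $E(f)$ via pull-back of $\xi(B)$ along $f$ combined with the universal property of $\xi(A)$, uniqueness via anti-affineness of $E(A)$, identification of $U(f)$ with the dual of $f^*$ through the compatibility of pull-back and push-out on $\Ext^1_{\cC}(A,V)$, the passage to $\Lie(\wA)$ in characteristic $0$ for part (ii), and the bijection $\Hom_{\cV}(E(A),G) \cong \Hom_{\cA}(A,A(G))$ for part (iii). The only difference is organizational — you package existence and uniqueness into a single ``relative'' universal property up front, where the paper constructs $E(f)$ first and proves the adjunction separately — but the underlying steps coincide.
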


\begin{proof}
We prove (i) and (ii) simultaneously. 
Let $f: A \to B$ be a morphism of abelian varieties.
Consider the pull-back diagram of exact sequences
\[ \CD
0 @>>> U(B) @>>> F @>>> A @>>> 0 \\
& & @V{\id}VV @VVV @V{f}VV \\
0 @>>> U(B) @>>> E(B) @>>> B @>>> 0. \\
\endCD \]
The universal property of $\xi(A)$ yields
a commutative diagram of exact sequences
\[ \CD
0 @>>> U(A) @>>> E(A) @>>> A @>>> 0 \\
& & @VVV @VVV @V{\id}VV \\
0 @>>> U(B) @>>> F @>>> A @>>> 0, \\
\endCD \]
and hence another such diagram,
\[ \CD
0 @>>> U(A) @>>> E(A) @>>> A @>>> 0 \\
& & @V{U(f)}VV @V{E(f)}VV @V{f}VV \\
0 @>>> U(B) @>>> E(B) @>>> B @>>> 0, \\
\endCD \]
which defines morphisms $E(f)$ and $U(f)$. 

Next, let $\eta \in H^1(B,\cO_B)$, so that we have a push-out
diagram of extensions
\[ \CD
0 @>>> U(B) @>>> E(B) @>>> B @>>> 0 \\
& & @V{\eta}VV @VVV @V{\id}VV \\
0 @>>> \bG_a @>>> E_\eta @>>> B @>>> 0. \\
\endCD \]
By construction, the pull-back of $E_\eta$ by $f$ is the push-out 
of $\xi(A)$ by $\eta \circ U(f) : U(A) \to \bG_a$. Hence $U(f)$ 
is the dual of $f^* : H^1(B,\cO_B) \to H^1(A,\cO_A)$. As a
consequence, $U$ is a covariant functor, and hence so is $E$.

Since the formation of the universal vector extension commutes
with base change under field extensions, the functors $E$ and $U$ 
commute with such base change as well. Clearly, they are additive;
this completes the proof of (i).

To complete the proof of (ii), recall the canonical isomorphism 
$H^1(A,\cO_A) \cong \Lie(\widehat{A})$, where the right-hand side denotes 
the Lie algebra of the dual abelian variety 
(see \cite[Rem.~9.4]{Milne86}). This isomorphism identifies
$f^*$ with $\Lie(\hat{f})$, where $\hat{f}: \widehat{B} \to \widehat{A}$
denotes the dual morphism of $f$. As a consequence, 
\[ U(f) = 0 \Leftrightarrow f^* = 0 \Leftrightarrow \hat{f} = 0 
\Leftrightarrow f = 0, \]
where the second equivalence holds since $\car(k) = 0$, and the third one
follows from biduality of abelian varieties.
Likewise, $U(f)$ is an isomorphism if and only if $f^*$ is an isomorphism;
equivalently, $\hat{f}$ is an isogeny, i.e., $f$ is an isogeny.

(iii) Given a vector extension $0 \to U \to G \to A(G) \to 0$, 
we check that the map
\[ \alpha : \Hom_{\cV}(E(A), G) \longrightarrow 
\Hom_{\cA}(A, A(G)), \quad u \longmapsto A(u) \]
is an isomorphism.

Consider a morphism $u : E(A) \to G$ such that $A(u) = 0$. 
Then $u$ factors through a morphism $E(A) \to R_u(G)$, and 
hence $u = 0$ as $E(A)$ is anti-affine. 

Next, consider a morphism $v : A \to A(G)$. By (i), we have 
a commutative square 
\[ \CD
E(A) @>{E(v)}>> E(A(G)) \\
@VVV @VVV \\
A @>{v}>> A(G). \\
\endCD \]
Also, the universal property of $\xi(A)$ yields 
a commutative square
\[ \CD
E(A(G)) @>{\delta}>> G \\
@VVV @V{\alpha_G}VV \\
A(G) @>{\id}>> A(G). \\
\endCD \]
Thus, $w := E(v) \circ \delta \in \Hom_{\cV}(E(A),G)$ 
satisfies $\alpha(v) = u$.
\end{proof}

Denote by $\ucV$ the isogeny category of vector extensions, that is, 
the full subcategory of $\ucC$ with the same objects as $\cV$. 
Then $\ucV$ is an abelian category in view of Lemma \ref{lem:vec}.
Also, Proposition \ref{prop:homdiv} yields natural isomorphisms
\[ \bQ \otimes_\bZ \Hom_{\cC}(G_1,G_2) 
\stackrel{\cong}{\longrightarrow} \Hom_{\ucV}(G_1,G_2), \quad
\bQ \otimes_\bZ \Ext^1_{\cC}(G_1,G_2) 
\stackrel{\cong}{\longrightarrow} 
\Ext^1_{\ucV}(G_1,G_2) \]
for any objects $G_1,G_2$ of $\ucV$.

\begin{corollary}\label{cor:vec}

\begin{enumerate}

\item[{\rm (i)}] The functors $E: \cA \to \cV$, $U: \cA \to \cU$
yield exact functors 
\[ \uE : \ucA \longrightarrow \ucV, \quad 
\uU : \ucA \longrightarrow \cU, \]
which commute with base change under field extensions.
Moreover, $\uE$ is left adjoint to the Albanese functor 
$\uA : \ucV \to \ucA$.

\item[{\rm (ii)}] The universal vector extension of any abelian
variety is a projective object of $\ucV$. 

\end{enumerate}

\end{corollary}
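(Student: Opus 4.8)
The plan is to derive both parts formally from Proposition \ref{prop:vec} together with the semisimplicity of $\ucA$, so that almost no new geometric input is required.

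First I would construct $\uE$ and $\uU$. By Proposition \ref{prop:vec}(ii) the functor $U$ carries every isogeny of abelian varieties to an isomorphism, and applying the five lemma to the defining diagram of $E(f)$ shows that $E$ carries isogenies to isogenies, hence to isomorphisms after composition with the quotient functor $Q$. Now $\ucA$ has the same objects as $\cA$, with $\Hom_{\ucA}(A,B) = \bQ \otimes_\bZ \Hom_\cA(A,B)$ and $Q(n_A)$ invertible of inverse $\tfrac1n \id$; by the standard localization argument, any additive functor on $\cA$ that inverts isogenies and takes values in a category whose $\Hom$-groups are $\bQ$-vector spaces factors uniquely through $\ucA$. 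Since the $\Hom$-groups of $\ucV$ are $\bQ$-vector spaces (the remark following Proposition \ref{prop:vec}) and $\cU$ is $k$-linear, this produces $\uE : \ucA \to \ucV$ and $\uU : \ucA \to \cU$, where $\uU$ may be identified with $\uE$ followed by the unipotent radical. Exactness is then automatic: $\ucA$ is semisimple, so every short exact sequence in it splits, and additive functors preserve split exactness. Commutation with base change follows from Proposition \ref{prop:vec}(i) and the uniqueness just used, since the two induced composites $\ucA_k \to \ucV_{k'}$ agree at the level of $\cC$.

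Next I would establish the adjunction $\uE \dashv \uA$ by tensoring the adjunction isomorphism of Proposition \ref{prop:vec}(iii) with $\bQ$. Every object of $\cV$ is connected, hence divisible (as $\car(k)=0$), so Proposition \ref{prop:homdiv}(ii), equivalently the $\Hom$-identification recorded after Proposition \ref{prop:vec}, gives $\bQ \otimes_\bZ \Hom_\cV(E(A),G) \cong \Hom_{\ucV}(E(A),G)$ and $\bQ \otimes_\bZ \Hom_\cA(A,A(G)) \cong \Hom_{\ucA}(A,\uA(G))$. The natural isomorphism $\Hom_\cV(E(A),G) \cong \Hom_\cA(A,A(G))$ thus yields a natural isomorphism $\Hom_{\ucV}(\uE(A),G) \cong \Hom_{\ucA}(A,\uA(G))$, which is the desired adjunction.

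Finally, for (ii) I would invoke the formal fact that a left adjoint to an exact functor preserves projective objects: $\uA$ is exact by part (i), so its left adjoint $\uE$ sends projectives to projectives. Since $\ucA$ is semisimple, every abelian variety $A$ is a projective object of $\ucA$, whence $E(A) = \uE(A)$ is projective in $\ucV$. The steps that genuinely need care are the well-definedness and naturality in the first two paragraphs—verifying that the localization recipe respects composition and that the $\bQ$-linearized adjunction isomorphism is natural in both variables; everything else is a formal consequence of semisimplicity of $\ucA$ and of the adjunction, so I expect no serious obstacle beyond this bookkeeping.
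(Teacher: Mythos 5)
Your proof is correct and follows essentially the same route as the paper: both parts are derived from Proposition \ref{prop:vec} together with Proposition \ref{prop:homdiv}, the adjunction being obtained by tensoring the $\cC$-level adjunction with $\bQ$, and projectivity of $E(A)$ following formally from exactness of $\uA$ and semisimplicity of $\ucA$. The only (harmless) divergence is in establishing exactness of $\uE$ and $\uU$ in part (i): you deduce it from semisimplicity of $\ucA$ (additive functors preserve split exact sequences), whereas the paper invokes Proposition \ref{prop:exa}; your variant is, if anything, more economical since it only uses the additivity of $E$ and $U$ recorded in Proposition \ref{prop:vec}(i).
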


\begin{proof}
(i) This follows from Propositions \ref{prop:exa} and \ref{prop:vec}.

(ii) We have canonical isomorphisms for any object $G$ of $\ucV$:
\[ \Hom_{\ucV}(E(A), G) \cong 
\bQ \otimes_{\bZ} \Hom_{\cV}(E(A), G) \cong
\bQ \otimes_{\bZ} \Hom_{\cA}(A, A(G)) \cong 
\Hom_{\ucA}(A, \uA(G)), \]
where the first and third isomorphisms follow from Proposition
\ref{prop:homdiv}, and the second one from Proposition \ref{prop:vec}
again. Since the Albanese functor $\uA$ is exact (Proposition 
\ref{prop:inj}), it follows that the functor 
$G \mapsto \Hom_{\ucV}(E(A), G)$ is exact as well.
\end{proof}

Next, let $G$ be an object of $\cV$.
Form and label the commutative diagram of exact sequences in $\cC$
\[ \CD
0 @>>> U(A) @>{\iota}>> E(A) @>>> A @>>> 0 \\
& & @V{\gamma}VV @V{\delta}VV @V{\id}VV \\
0 @>>> U @>>> G @>>> A @>>> 0, \\
\endCD \]
where $U = U(G)$, $A = A(G)$, and $\gamma = \gamma_G$ classifies 
the bottom extension. This yields an exact sequence in $\cC$
\[ \xi : \quad 0 \longrightarrow U(A) 
\stackrel{\gamma - \iota}{\longrightarrow} U \times E(A) 
\longrightarrow G \longrightarrow 0. \]

\begin{proposition}\label{prop:comp}
Keep the above notation.

\begin{enumerate}

\item[{\rm (i)}] $\xi$ yields a projective resolution of 
$G$ in $\ucV$.

\item[{\rm (ii)}] For any object $H$ of $\cV$, we have an exact sequence
\[ 0 \longrightarrow \Hom_{\ucV}(G,H)
\stackrel{\varphi}{\longrightarrow} 
\Hom_{\cU}(U(G),U(H)) \times \Hom_{\ucA}(A(G),A(H)) \]
\[ \stackrel{\psi}{\longrightarrow} \Hom_{\cU}(U(A(G)),U(H))
\longrightarrow \Ext^1_{\ucV}(G,H)
\longrightarrow 0, \]
where $\varphi(\uf) := (U(\uf),\uA(\uf))$, and 
$\psi(u,v) := u \circ \gamma_G - \gamma_H \circ U(v)$.

\end{enumerate}

\end{proposition}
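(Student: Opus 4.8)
The plan is to obtain both assertions from the short exact sequence $\xi$ by recognising it as a projective resolution of $G$ in $\ucV$ and then applying $\Hom_{\ucV}(-,H)$.

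First, for (i), I would verify that $\xi$ is exact already in $\cC$: the map $U\times E(A)\to G$ is $(a,e)\mapsto a+\delta(e)$, where $U=U(G)=R_u(G)$ is viewed as a subgroup of $G$ and $\delta:E(A)\to G$ comes from the defining diagram. This is surjective since its image contains $U$ and maps onto $A=G/U$ through $\delta$; its kernel is exactly the image of $\gamma-\iota$ by the two commuting squares defining $\gamma$ and $\delta$, and $\gamma-\iota$ is a monomorphism because $\iota$ is. Applying the exact functor $Q$ and using that $\ucV$ is abelian (Lemma \ref{lem:vec}) shows $\xi$ is a short exact sequence in $\ucV$. It then remains to see that $U\times E(A)$ and $U(A)$ are projective in $\ucV$. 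For $E(A)$ this is Corollary \ref{cor:vec}(ii). For a unipotent group $W$ (such as $U$ or $U(A)$) I would note that $\Hom_{\ucV}(W,-)$ is naturally isomorphic to $\Hom_{\cU}(W,\uR_u(-))$, since $\ucV$ is full in $\ucC$, $\uR_u$ is right adjoint to the inclusion $\cU\to\ucC$ (Proposition \ref{prop:ru}), and $\uR_u(H)=U(H)$ for $H\in\cV$; this functor is exact, being the composite of the exact $\uR_u$ with $\Hom_{\cU}(W,-)$ on the semisimple category $\cU$, so $W$ is projective. A finite product of projectives is projective, and hence $\xi$ is a length-one projective resolution.

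For (ii), I would apply $\Hom_{\ucV}(-,H)$ to this resolution. As the middle and left terms are projective, the long exact sequence for $\Ext_{\ucV}(-,H)$ collapses (since $\Ext^1_{\ucV}(U\times E(A),H)=0$) to the four-term exact sequence
\[ 0 \longrightarrow \Hom_{\ucV}(G,H) \stackrel{\varphi}{\longrightarrow} \Hom_{\ucV}(U\times E(A),H) \stackrel{\psi}{\longrightarrow} \Hom_{\ucV}(U(A),H) \longrightarrow \Ext^1_{\ucV}(G,H) \longrightarrow 0, \]
where $\varphi$ is restriction along $U\times E(A)\to G$ and $\psi$ is restriction along $\gamma-\iota$. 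It then remains only to identify the two middle groups and the two maps. I would use $\Hom_{\ucV}(U\times E(A),H)\cong\Hom_{\ucV}(U,H)\times\Hom_{\ucV}(E(A),H)$, the isomorphism $\Hom_{\ucV}(U,H)\cong\Hom_{\cU}(U(G),U(H))$ from the $\uR_u$-adjunction above, and the isomorphism $\Hom_{\ucV}(E(A),H)\cong\Hom_{\ucA}(A(G),A(H))$ from the adjunction $\uE\dashv\uA$ of Corollary \ref{cor:vec}(i); the same reasoning identifies $\Hom_{\ucV}(U(A),H)$ with $\Hom_{\cU}(U(A(G)),U(H))$.

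The bulk of the work, and the step I expect to be the main obstacle, is checking that under these identifications $\varphi$ and $\psi$ acquire exactly the stated forms. For $\varphi$, restricting $\uf$ along $U\hookrightarrow G$ gives the induced map $U(\uf)$ on unipotent radicals by definition, while restricting along $\delta:E(A)\to G$ gives $\uA(\uf)$, because $\delta$ induces the identity on Albanese varieties and the unit of $\uE\dashv\uA$ is an isomorphism; thus $\varphi(\uf)=(U(\uf),\uA(\uf))$. For $\psi$, representing $(u,v)$ as a morphism $U\times E(A)\to H$ and precomposing with $\gamma-\iota$, the $U$-component contributes $u\circ\gamma_G$, while the $E(A)$-component contributes the restriction along $\iota$ of the morphism $E(A)\to H$ adjoint to $v$, namely $\delta_H\circ E(v)$; using the left square of the functoriality diagram of Proposition \ref{prop:vec}, $E(v)\circ\iota=\iota_H\circ U(v)$, together with the relation $\delta_H\circ\iota_H=\gamma_H$ as maps into $U(H)$, this restriction equals $\gamma_H\circ U(v)$. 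The minus sign in $\gamma-\iota$ then produces $\psi(u,v)=u\circ\gamma_G-\gamma_H\circ U(v)$, and exactness of the displayed sequence yields the asserted presentation of $\Ext^1_{\ucV}(G,H)$.
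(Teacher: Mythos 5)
Your proof is correct and follows essentially the same route as the paper's (which is very terse): exhibit $\xi$ as a length-one projective resolution of $G$ in $\ucV$ using the projectivity of unipotent groups and of $E(A)$, then apply $\Hom_{\ucV}(-,H)$ and identify the terms via the adjunctions $\uR_u$ and $\uE \dashv \uA$. Your explicit verification that $\varphi$ and $\psi$ take the stated forms supplies exactly the details the paper leaves to the reader.
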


\begin{proof}
(i) This holds as $U,U(A)$ are projective in $\cC$ (Theorem 
\ref{thm:zero}), and $E(A)$ is projective in $\ucC$ (Proposition 
\ref{prop:vec}).

(ii) In view of (i), this follows readily from the long exact
sequence of extension groups 
\[ 0 \longrightarrow \Hom_{\ucV}(G,H) \longrightarrow 
\Hom_{\ucV}(U \times E(A),H) \longrightarrow \Hom_{\ucV}(U,H) 
\longrightarrow \Ext^1_{\ucV}(G,H) \longrightarrow 0 \]
associated with the short exact sequence $\xi$. 
\end{proof}

As a direct consequence of Proposition \ref{prop:comp}, we obtain:

\begin{corollary}\label{cor:orth}
The following conditions are equivalent for an object $G$ of $\cV$:

\begin{enumerate}

\item[{\rm (i)}] $G\cong E(A)$ in $\ucV$ for some abelian variety $A$.

 \item[{\rm (ii)}] $\Hom_{\ucV}(G,\bG_a) = \Ext^1_{\ucV}(G,\bG_a) = 0$. 

\end{enumerate}

\end{corollary}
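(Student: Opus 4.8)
The plan is to read off everything from the four-term exact sequence furnished by Proposition \ref{prop:comp}(ii), specialized to $H = \bG_a$. First I would note that $A(\bG_a) = 0$ and that $U(\bG_a) = R_u(\bG_a) = \bG_a$, so the factor $\Hom_{\ucA}(A(G),A(\bG_a))$ disappears and the sequence collapses to
\[ 0 \longrightarrow \Hom_{\ucV}(G,\bG_a) \longrightarrow \Hom_{\cU}(U(G),\bG_a) \stackrel{\psi}{\longrightarrow} \Hom_{\cU}(U(A(G)),\bG_a) \longrightarrow \Ext^1_{\ucV}(G,\bG_a) \longrightarrow 0, \]
where now $\psi(u) = u \circ \gamma_G$, with $\gamma_G : U(A(G)) \to U(G)$ the classifying map appearing in the diagram that precedes Proposition \ref{prop:comp}.

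The key point is that condition (ii) holds exactly when $\psi$ is an isomorphism: injectivity of $\psi$ is equivalent to the vanishing of $\Hom_{\ucV}(G,\bG_a)$ (the kernel of $\psi$), and surjectivity to the vanishing of $\Ext^1_{\ucV}(G,\bG_a)$ (its cokernel). Since $\car(k) = 0$, unipotent groups are finite-dimensional $k$-vector spaces and $\Hom_{\cU}(-,\bG_a)$ is the $k$-linear dual, so $\psi$ is precisely the transpose of $\gamma_G$. As the transpose of a linear map between finite-dimensional vector spaces is an isomorphism if and only if the map itself is, I conclude that (ii) is equivalent to $\gamma_G$ being an isomorphism.

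It then remains to translate the isomorphy of $\gamma_G$ into the structure of $G$. In the defining diagram the rows are exact, the right-hand vertical map is $\id_{A(G)}$, and the left-hand one is $\gamma_G$; so by the short five lemma, $\gamma_G$ is an isomorphism if and only if the middle map $\delta : E(A(G)) \to G$ is an isomorphism in $\cC$. This gives (ii) $\Rightarrow$ (i) at once, with $A = A(G)$. For the converse I would use that the two groups in (ii) are isogeny invariants: an isomorphism $G \cong E(A)$ in $\ucV$ identifies $\Hom_{\ucV}(G,\bG_a)$ and $\Ext^1_{\ucV}(G,\bG_a)$ with the corresponding groups for $E(A)$, for which the classifying map is $\gamma_{E(A)} = \id_{U(A)}$; hence $\psi$ is the identity and both groups vanish.

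The only step demanding any care is the identification of $\psi$ with the transpose of $\gamma_G$ and the observation that this transpose detects isomorphy — which is exactly where characteristic $0$ enters, through the equivalence of $\cU$ with finite-dimensional vector spaces. Everything else is formal bookkeeping once Proposition \ref{prop:comp}(ii) is available.
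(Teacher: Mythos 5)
Your proof is correct and is exactly the argument the paper intends: the corollary is stated there as a direct consequence of Proposition \ref{prop:comp}, obtained by specializing the four-term exact sequence to $H = \bG_a$, identifying $\psi$ with the transpose of $\gamma_G$, and invoking the short five lemma on the diagram defining $\gamma_G$. No gaps.
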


As a further consequence, we describe the projective or injective 
objects of $\ucV$:

\begin{corollary}\label{cor:projinj}

\begin{enumerate}

\item[{\rm (i)}]
The projective objects of $\ucV$ are exactly the products
$V \times E(A)$, where $V$ is unipotent, and $A$ is an 
abelian variety.

\item[{\rm (ii)}]
The injective objects of $\ucV$ are exactly the abelian varieties.

\end{enumerate}

\end{corollary}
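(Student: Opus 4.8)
The plan is to derive both parts from the projective resolution $\xi$ of Proposition \ref{prop:comp} and, above all, from the four-term exact sequence of Proposition \ref{prop:comp}(ii), evaluated against the test object $\bG_a$ (for projectives) and against abelian varieties (for injectives). Two of the four inclusions are purely formal: since $\ucV$ is a full subcategory of $\ucC$ whose short exact sequences are exactly the short exact sequences of $\ucC$ with all terms in $\ucV$ (Lemma \ref{lem:vec}), projectivity and injectivity of an object of $\ucV$ are the same whether tested in $\ucV$ or in $\ucC$. Hence a unipotent group is projective in $\ucV$ (Proposition \ref{prop:ru}), each $E(A)$ is projective in $\ucV$ (Corollary \ref{cor:vec}), so every product $V \times E(A)$ is projective; dually, every abelian variety is injective in $\ucV$ (Proposition \ref{prop:inj}). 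It remains to establish the two converses.

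For (i), let $G$ be projective, and set $A = A(G)$, $U = U(G)$. Applying Proposition \ref{prop:comp}(ii) with $H = \bG_a$ (so that $A(\bG_a) = 0$ and $U(\bG_a) = \bG_a$) collapses $\psi$ to the transpose of $\gamma_G$, giving
\[ \Ext^1_{\ucV}(G,\bG_a) \cong \Coker\bigl( \Hom_\cU(U,\bG_a) \longrightarrow \Hom_\cU(U(A),\bG_a), \ u \longmapsto u \circ \gamma_G \bigr). \]
Dualizing finite-dimensional vector spaces, this cokernel vanishes if and only if $\gamma_G \colon U(A) \to U$ is injective; projectivity forces the vanishing, hence $\gamma_G$ is injective. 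Then the canonical morphism $\delta \colon E(A) \to G$ attached to $\gamma_G$ (the middle vertical arrow in the diagram preceding Proposition \ref{prop:comp}) satisfies $\Ker(\delta) = \Ker(\gamma_G) = 0$ and $\Coker(\delta) \cong U/\gamma_G(U(A)) =: V$, a unipotent group. Thus
\[ 0 \longrightarrow E(A) \stackrel{\delta}{\longrightarrow} G \longrightarrow V \longrightarrow 0 \]
is exact in $\ucV$, and as $V$ is projective it splits, yielding $G \cong V \times E(A)$.

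For (ii), let $I$ be injective and suppose, for contradiction, that $U(I) \neq 0$. Choose a nonzero simple abelian variety $B$ that is not isogenous to any simple factor of $A(I)$; then $\Hom_{\ucA}(B, A(I)) = 0$ by semisimplicity of $\ucA$. Feeding $G = B$ into Proposition \ref{prop:comp}(ii) — where $B$ has trivial unipotent radical, $A(B) = B$, and $U(A(B))$ is the unipotent part $U(B)$ of the universal vector extension of $B$ — makes the entire middle term vanish, so that
\[ \Ext^1_{\ucV}(B, I) \cong \Hom_\cU(U(B), U(I)). \]
This group is nonzero, since $U(B) \neq 0$ (as $\dim U(B) = \dim B > 0$) and $U(I) \neq 0$, contradicting the injectivity of $I$. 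Therefore $U(I) = 0$, i.e. $I$ is an abelian variety.

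The one non-formal ingredient, and the main obstacle, is the existence used in (ii) of a simple abelian variety over $k$ not isogenous to the finitely many simple factors of $A(I)$; equivalently, that there are infinitely many isogeny classes of simple abelian varieties over $k$. I would obtain these from the quadratic twists $E^{(d)}$ of a fixed non-CM elliptic curve $E$ (which exists already over $\bQ \subseteq k$): a short Galois-cohomology computation gives $\Hom_k(E^{(d)}, E^{(d')}) = 0$ whenever $d \not\equiv d'$ in $k^{*}/(k^{*})^{2}$, so when $k^{*}/(k^{*})^{2}$ is infinite these twists are pairwise non-isogenous over $k$. The remaining fields, for which $k^{*}/(k^{*})^{2}$ is finite, are disposed of separately — by a cardinality count when $k$ is uncountable, and by complex multiplication by infinitely many distinct imaginary quadratic fields when $k$ is countable. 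This existence statement is the only step that goes beyond the formal machinery of Proposition \ref{prop:comp}.
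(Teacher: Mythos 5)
Your part (i) is correct and follows the paper's own argument essentially step for step: test against $\bG_a$ via Proposition \ref{prop:comp}, deduce that $\gamma_G$ and hence $\delta : E(A) \to G$ are injective, and split off the unipotent cokernel. One small caveat: your blanket assertion that projectivity of an object of $\ucV$ is the same whether tested in $\ucV$ or in $\ucC$ is false in the direction you do not use --- $E(A)$ is projective in $\ucV$ but, by Theorem \ref{thm:proj}, not in $\ucC$ when $A \neq 0$. You only invoke the valid direction (projective or injective in $\ucC$ implies the same in $\ucV$), so this is a misstatement rather than a gap.

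The genuine gap is in part (ii). You reduce everything to the claim that \emph{every} field of characteristic $0$ carries infinitely many pairwise non-isogenous simple abelian varieties, and your proof of that claim does not close in your third case, $k$ countable with $k^{*}/(k^{*})^{2}$ finite. For such a field the CM argument needs the $j$-invariant $j_K$ of a curve with CM by $K$ to lie in $k$ for infinitely many imaginary quadratic fields $K$, and there is no reason for this: take $k$ to be the quadratic closure of $\bQ$ (countable, every element a square). Then $\bQ(j_K) \subset k$ forces $\bQ(j_K)$, of degree the class number of $K$, to embed in an iterated quadratic tower, i.e., forces the class group of $K$ to be a $2$-group, and it is not known that infinitely many imaginary quadratic fields have this property; similar difficulties arise for $\bar{\bQ} \cap \bQ_p$. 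The paper sidesteps the existence statement entirely outside the number-field case. If $k$ is not a number field, then $[k : \bQ] = \infty$, and for an \emph{arbitrary} non-zero abelian variety $B$ the long exact sequence for $0 \to U(I) \to I \to A(I) \to 0$, combined with $\Ext^1_{\ucV}(B,I)=0$ and the identification $\Ext^1_{\ucV}(B,U(I)) \cong \Hom_{\cU}(U(B),U(I))$ from Proposition \ref{prop:comp}, exhibits $\Hom_{\cU}(U(B),U(I))$ as a quotient of the finite-dimensional $\bQ$-vector space $\Hom_{\ucA}(B,A(I))$; but $\Hom_{\cU}(U(B),U(I))$ is a $k$-vector space, hence infinite-dimensional over $\bQ$ unless $U(I)=0$. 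Only for number fields does one need a simple $B$ not isogenous to the factors of $A(I)$, and there $k^{*}/(k^{*})^{2}$ is infinite, so even your quadratic-twist argument suffices (the paper instead quotes Masser--W\"ustholz finiteness of isogeny classes). In short, the hard case of your uniform claim is exactly the case the paper disposes of for free; replacing your case analysis by this dimension count for all non-number fields repairs the proof.
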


\begin{proof}
Let $G$ be an extension of an abelian variety $A$ by a unipotent
group $U$.

(i) If $G$ is projective in $\ucV$, then $\Ext^1_{\ucV}(G, \bG_a) = 0$. 
In view of Proposition \ref{prop:comp}, it follows that the map 
$\Hom_{\cU}(U,\bG_a) \to \Hom_{\cU}(U(A),\bG_a)$, 
$u \mapsto u \circ  \gamma$ is surjective. Equivalently,
$\gamma$ is injective; hence so is $\delta : E(A) \to G$.
Identifying $E(A)$ with a subgroup of $G$, it follows that 
$G = U + E(A)$, and $U(A) \subset U$. We may choose 
a complement $V \subset U$ to the subspace $U(A) \subset U$; 
then $G \cong V \times E(A)$. Conversely, every such product is
projective by Proposition \ref{prop:vec}. This yields the assertion.

(ii) If $G$ is injective in $\ucV$, then $\Ext^1_{\ucV}(B,G) = 0$
for any abelian variety $B$. Thus, we have an exact sequence
\[ 0 \longrightarrow \Hom_{\ucV}(B,U) \longrightarrow \Hom_{\ucV}(B,G) 
\longrightarrow \Hom_{\ucV}(B,A) 
\stackrel{\partial}{\longrightarrow} 
\Ext^1_{\ucV}(B,U) \longrightarrow 0. \]
Moreover, $\Ext^1_{\ucV}(B,U) \cong \Hom_{\cU}(U(B),U)$, as follows
e.g. from Proposition \ref{prop:comp}. Since 
$\Hom_{\ucV}(B,A) = \Hom_{\ucA}(B,A)$ is a finite-dimensional 
$\bQ$-vector space, so is $\Hom_{\cU}(U(B),U)$.

When $k$ is not a number field, i.e., $k$ is an infinite-dimensional
$\bQ$-vector space, this forces $U = 0$, since $U(B) \neq 0$ for
any non-zero abelian variety $B$. Thus, $G$ is an abelian variety.

On the other hand, when $k$ is a number field, there are only finitely
many isomorphism classes of abelian varieties that are isogenous to any 
prescribed abelian variety (see \cite{MW} for a quantitative version
of this finiteness result). 
As a consequence, we may choose a simple abelian variety $B$, 
not isogenous to any simple factor of $A$. Then $\Hom_{\ucA}(B,A) =0$;
as above, this yields $U = 0$, i.e., $G$ is an abelian variety. 

Conversely, every abelian variety is injective in $\ucV$ by Proposition 
\ref{prop:inj}. 
\end{proof}

We now describe the structure of $\cV$ and $\ucV$ in terms of 
linear algebra. Let $\cD$ be the category with objects the triples 
$(A,U,\gamma)$, where $A$ is an abelian variety, $U$ a unipotent
group, and $\gamma : U(A) \to U$ a morphism; the $\cD$-morphisms 
from $(A_1,U_1,\gamma_1)$ to $(A_2,U_2,\gamma_2)$ are those pairs
of $\cC$-morphisms $u: U_1 \to U_2$, $v: A_1 \to A_2$ such 
that the square
\[ \CD 
U(A_1) @>{U(v)}>> U(A_2) \\
@V{\gamma_1}VV @V{\gamma_2}VV \\
U_1 @>{u}>> U_2 \\
\endCD \]
commutes. We also introduce the `isogeny category' $\ucD$, by allowing
$v$ to be a $\ucC$-morphism in the above definition (this makes
sense in view of Corollary \ref{cor:vec}). Next, define a functor
\[ D : \cV \longrightarrow \cD \]
by assigning to each object $G$ the triple
$(A(G),R_u(G),\gamma)$, where $\gamma : U(A(G)) \to R_u(G)$
denotes the classifying map, and to each morphism $f : G_1 \to G_2$,
the pair $(A(f),U(f))$. By Corollary \ref{cor:vec} again, we may
define similarly a functor
\[ \uD : \ucV \longrightarrow \ucD. \]

\begin{proposition}\label{prop:vecequi}
With the above notation, the functors $D$ and $\uD$ yield
equivalences of categories.
\end{proposition}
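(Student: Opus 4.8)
The plan is to prove that each of $D$ and $\uD$ is essentially surjective, full, and faithful. Since $\cV$ and $\ucV$ share the same objects, and likewise $\cD$ and $\ucD$, essential surjectivity is a single construction. Starting from a triple $(A,U,\gamma)$ with $\gamma : U(A) \to U$, I would let $G$ be the cokernel of the monomorphism $(\gamma,-\iota) : U(A) \to U \times E(A)$, that is, the pushout of the universal vector extension $\xi(A)$ along $\gamma$. Then $G$ sits in an extension $0 \to U \to G \to A \to 0$, so $G$ is an object of $\cV$ with $R_u(G) = U$ and $A(G) = A$ by Theorem \ref{thm:zero}; and this extension, being the pushout of $\xi(A)$ along $\gamma$, is classified by $\gamma$ in view of the universal property of $\xi(A)$. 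Hence $D(G) \cong (A,U,\gamma)$ in $\cD$ and $\uD(G) \cong (A,U,\gamma)$ in $\ucD$, so both functors are essentially surjective.

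For the full faithfulness of $\uD$, I would simply read it off Proposition \ref{prop:comp}(ii): there $\varphi$ is injective with image $\Ker(\psi)$, and $\Ker(\psi)$ is exactly the set of pairs $(u,v)$ with $u : R_u(G) \to R_u(H)$, $v : A(G) \to A(H)$ an $\ucA$-morphism, and $u \circ \gamma_G = \gamma_H \circ U(v)$, i.e. $\Hom_{\ucD}(\uD(G),\uD(H))$. Since $\varphi(\uf) = (U(\uf),\uA(\uf))$ is the map induced by $\uD$ on morphisms (up to the ordering convention for the two entries), $\uD$ induces an isomorphism $\Hom_{\ucV}(G,H) \cong \Hom_{\ucD}(\uD(G),\uD(H))$. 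For $D$ itself I would run the same computation in $\cV$: applying the left exact functor $\Hom_{\cV}(-,H) = \Hom_{\cC}(-,H)$ to the exact sequence $\xi$ (all of whose terms lie in $\cV$) gives
\[ 0 \longrightarrow \Hom_{\cV}(G,H) \longrightarrow \Hom_{\cV}(U \times E(A),H) \stackrel{r}{\longrightarrow} \Hom_{\cV}(U(A),H). \]
I would identify the middle term with $\Hom_{\cU}(R_u(G),R_u(H)) \times \Hom_{\cA}(A(G),A(H))$, using that a morphism from a unipotent group factors through the unipotent radical together with the adjunction $\Hom_{\cV}(E(A),H) \cong \Hom_{\cA}(A,A(H))$ of Proposition \ref{prop:vec}(iii), and the right-hand term with $\Hom_{\cU}(U(A(G)),R_u(H))$. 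Under these identifications $r$ becomes $\psi(u,v) = u \circ \gamma_G - \gamma_H \circ U(v)$, so $\Ker(r) = \Hom_{\cD}(D(G),D(H))$ and the resulting isomorphism is $D$ on morphisms.

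The crux of both arguments is the identification of $r$ (equivalently $\psi$) with the linear-algebra data, and this is where I expect the bookkeeping to require the most care; everything else is formal. It rests on two identities: the defining relation $\delta_H \circ \iota_{A(H)} = \gamma_H$ of the classifying map, and the naturality square $E(v) \circ \iota_A = \iota_{A(H)} \circ U(v)$ of the universal vector extension (Proposition \ref{prop:vec}). Together these show that the $E(A)$-component of $r$ sends $v$ to $-\gamma_H \circ U(v)$ while the $U$-component sends $u$ to $u \circ \gamma_G$, which is precisely $\psi$. As a variant, once $D$ is known to be an equivalence one may deduce the statement for $\uD$ by tensoring with $\bQ$: indeed $\Hom_{\ucV}(G,H) = \bQ \otimes_{\bZ} \Hom_{\cV}(G,H)$, and since each $\Hom_{\cU}$ is already a $\bQ$-vector space, $\bQ \otimes_{\bZ} \psi$ is the $\ucD$-version of $\psi$; as $\bQ \otimes_{\bZ} -$ is exact it preserves kernels, giving $\Hom_{\ucD}(\uD(G),\uD(H)) = \bQ \otimes_{\bZ} \Hom_{\cD}(D(G),D(H))$ and hence the desired isomorphism through $\uD$.
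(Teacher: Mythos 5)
Your proof is correct. The paper itself omits the argument (``We omit the easy proof''), but what you have written is clearly the intended one: essential surjectivity via the pushout of the universal vector extension $\xi(A)$ along $\gamma$, and full faithfulness of $\uD$ read off directly from the exact sequence of Proposition \ref{prop:comp} (ii), whose kernel term $\Ker(\psi)$ is by definition $\Hom_{\ucD}(\uD(G),\uD(H))$. Your identification of the restriction map $r$ with $\psi$, via $\tilde v\circ\iota_A=\delta_H\circ E(v)\circ\iota_A=\gamma_H\circ U(v)$, is exactly the computation underlying Proposition \ref{prop:comp} itself, and the $\bQ\otimes_{\bZ}-$ variant for deducing the $\ucD$-statement from the $\cD$-statement is also valid since the $\Hom_{\cU}$ terms are already $\bQ$-vector spaces in characteristic $0$.
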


We omit the easy proof.

\subsection{Semi-abelian varieties}
\label{subsec:sav}

Recall that a semi-abelian variety is an algebraic group $G$
that lies in an extension
\[ \xi : \quad 
0 \longrightarrow T \longrightarrow G \longrightarrow A
\longrightarrow 0, \]
where $T$ is a torus, and $A$ an abelian variety. We now adapt part 
of the results of \S \ref{subsec:veav} to this setting, leaving the
(easy) verifications to the motivated reader. The algebraic groups 
$T = T(G)$ and $A = A(G)$ are uniquely determined
by $G$, and the extension $\xi$ has no non-trivial automorphisms.
Thus, the data of $G$ and of the extension $\xi$ are equivalent.
Moreover, recall the natural isomorphism
\[ c :  \Ext^1_{\cC}(A,T) \stackrel{\cong}{\longrightarrow}
\Hom^{\Gamma}(X(T), \wA(k_s)), \]
which arises from the Weil-Barsotti isomorphism
\[ \Ext^1_{\cC_{k_s}}(A_{k_s},\bG_{m,k_s}) 
\stackrel{\cong}{\longrightarrow} \wA(k_s) \]
together with the pairing 
\[ \Ext^1_{\cC}(A,T) \times X(T) \longrightarrow
\Ext^1_{\cC_{k_s}}(A_{k_s},\bG_{m,k_s})
\]
given by push-out of extensions via characters of $T$.

Denote by $\cS$ the full subcategory of $\cC$ with objects the 
semi-abelian varieties. Then the analogue of Lemma \ref{lem:vec}
holds in view e.g. of \cite[\S 5.4]{Brion-II} (but there is no analogue
of the universal vector extension in this setting). Thus, the isogeny
category of semi-abelian varieties, $\ucS$, is an abelian category. 
As for vector extensions of abelian varieties, we have natural
isomorphisms
\[ \bQ \otimes_\bZ \Hom_{\cC}(G_1,G_2) 
\stackrel{\cong}{\longrightarrow} \Hom_{\ucS}(G_1,G_2), \quad
\bQ \otimes_\bZ \Ext^1_{\cC}(G_1,G_2) 
\stackrel{\cong}{\longrightarrow} 
\Ext^1_{\ucS}(G_1,G_2) \]
for any objects $G_1,G_2$ of $\ucS$. This yields a natural isomorphism
\[ \Ext^1_{\ucC}(A,T) \stackrel{\cong}{\longrightarrow}
\Hom^{\Gamma}(X(T)_{\bQ}, \wA(k_s)_{\bQ}). \]
Note that the assignment $A \mapsto \wA(k_s)_{\bQ}$ yields an
exact functor from $\ucA$ to the category of $\bQ$-vector spaces
equipped with the discrete topology and a continuous representation 
of $\Gamma$, as follows e.g. from Remarks \ref{rem:av}.

Next, we obtain a description of $\ucS$ in terms of linear algebra. 
Let $\ucE$ be the category with objects the triples
$(A,M,c)$, where $A$ is an abelian variety, $M$ a finite-dimensional
$\bQ$-vector space equipped with a continuous action of $\Gamma$, and 
$c : M \to \wA(k_s)_{\bQ}$ a $\Gamma$-equivariant linear map;
the $\ucE$-morphisms from $(A_1,M_1,c_1)$ to $(A_2,M_2,c_2)$
are those pairs $(\uu,v)$, where $\uu: A_1 \to A_2$
is a $\ucA$-morphism and $v: M_2 \to M_1$ a $\Gamma$-equivariant
linear map, such that the square
\[ \CD
M_2 @>{c_2}>> \wA_2(k_s)_{\bQ} \\
@V{v}VV @V{\hat{\uu}}VV \\
M_1 @>{c_1}>> \wA_1(k_s)_{\bQ} \\
\endCD \]
commutes. Then one may check that the assignment 
$G \mapsto (A(G), X(T(G))_{\bQ}, c(G)_{\bQ})$ 
yields an equivalence of categories $\ucS \to \ucE$. 
Moreover, the sequence
\[ 0 \longrightarrow \Hom_{\ucS}(G_1,G_2)
\stackrel{\varphi}{\longrightarrow} 
\Hom^{\Gamma}(M_2,M_1) \times \Hom_{\ucA}(A_1,A_2) \]
\[ \stackrel{\psi}{\longrightarrow} \Hom^{\Gamma}(M_2,\wA_1(k_s)_\bQ)
\longrightarrow \Ext^1_{\ucS}(G_1,G_2)
\longrightarrow 0 \]
turns out to be exact for any semi-abelian varieties $G_1,G_2$, 
where $\ucE(G_i) := (A_i,M_i,c_i)$ for $i = 1,2$, 
$\varphi(\uf) := (\uX_{\bQ} \circ \uT)(\uf),\uA(\uf))$, and
$\psi(u,v) := c_1 \circ u - \hat{v} \circ c_2$.

Yet there are important differences between the isogeny categories
of vector extensions and semi-abelian varieties. For example, the
latter does not have enough projectives in general:

\begin{proposition}\label{prop:sabproj}

\begin{enumerate}

\item[{\rm (i)}]
If $k$ is not locally finite, then the projective objects of 
$\ucS$ are exactly the tori.

\item[{\rm (ii)}]
If $k$ is locally finite, then the product functor
$\ucT \times \ucA \to \ucS$ yields an equivalence of categories.

\end{enumerate}

\end{proposition}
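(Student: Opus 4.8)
The plan is to read both statements off the description of $\ucS$ by the category $\ucE$ and the four-term exact sequence for $\Hom_{\ucS}$ and $\Ext^1_{\ucS}$ recorded above, thereby reducing everything to properties of the spaces $\wA(k_s)_\bQ$. I would first dispose of (ii). If $k$ is locally finite then $k_s = \bar{\bF}_p$ and every abelian variety is defined over a finite subfield, so $\wA(k_s)$ is a union of finite groups of points over finite fields; hence it is torsion and $\wA(k_s)_\bQ = 0$. Thus the structure map $c : M \to \wA(k_s)_\bQ$ of every object $(A,M,c)$ of $\ucE$ vanishes, so $\ucE \simeq \ucA \times \Rep_\bQ(\Gamma)$, i.e.\ $\ucS \simeq \ucA \times \ucT$ via $\uX_\bQ$ (Proposition \ref{prop:tss}). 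Feeding $c_1 = c_2 = 0$ into the exact sequence kills the term $\Hom^\Gamma(M_2,\wA_1(k_s)_\bQ)$, so $\varphi$ is an isomorphism; combined with Proposition \ref{prop:tss} this shows the product functor $\ucT \times \ucA \to \ucS$ is fully faithful, and it is essentially surjective since $c(G)_\bQ = 0$ forces $G \cong T(G) \times A(G)$ in $\ucS$.

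For (i), that tori are projective is immediate: taking $G_1$ a torus gives $\wA_{G_1} = 0$, whence $\Ext^1_{\ucS}(T,H) = 0$ for all $H$. Conversely, let $G$ be a projective object, with associated data $(A,M_G,c_G)$ and $A = A(G)$. Testing projectivity against tori $H$ (so that $A_H = 0$ and $c_H = 0$), surjectivity of $\psi$ becomes surjectivity of
\[ c_G \circ - \, : \, \Hom^\Gamma(N,M_G) \longrightarrow \Hom^\Gamma(N,\wA(k_s)_\bQ) \]
for every finite-dimensional continuous $\Gamma$-module $N$ (each such $N$ is $X(T)_\bQ$ for some torus $T$ by Proposition \ref{prop:tss}). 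Since the $\Gamma$-action on $\wA(k_s)_\bQ$ is smooth, that space is the directed union of its finite-dimensional $\Gamma$-submodules $N$; applying the surjectivity to each inclusion $N \hookrightarrow \wA(k_s)_\bQ$ yields $N \subseteq \Im(c_G)$, so $\wA(k_s)_\bQ = \Im(c_G)$ is finite-dimensional over $\bQ$.

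It then remains to show that $\wA(k_s)_\bQ$ is infinite-dimensional whenever $A \neq 0$ and $k$ is not locally finite; this forces $A = 0$, so that $G$ is a torus and (i) follows. This infinite-rank statement is the crux and the main obstacle. I would reduce to $k$ finitely generated over its prime field, since $\wA$ descends to such a subfield $k_0 \subseteq k$ that may be chosen not locally finite (it contains $\bQ$ in characteristic $0$; in characteristic $p$ one adjoins an element of $k$ transcendental over $\bF_p$), and $k_{0,s} \subseteq k_s$ because separability over $k_0$ entails separability over $k$. For such fields the infinite rank of $\wA(k_s)$ is the theorem of Frey and Jarden over number fields together with its extensions to arbitrary finitely generated, non-locally-finite fields; since passing to $\otimes_\bZ \bQ$ identifies the contributions of $k_s$ and $\bar{k}$, the statement over $k_s$ follows and the proof is complete.
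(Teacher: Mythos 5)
Your proof is correct. For part (i) it follows essentially the same path as the paper: both arguments test projectivity of $G$ against arbitrary tori, identify the relevant surjectivity condition with surjectivity of $f \mapsto c_\bQ \circ f$ from $\Hom^{\Gamma}(N,X(T)_\bQ)$ to $\Hom^{\Gamma}(N,\wA(k_s)_\bQ)$ for every finite-dimensional continuous $\Gamma$-module $N$, deduce that $c_\bQ$ is surjective and hence that $\wA(k_s)$ has finite rank, and then invoke the Frey--Jarden theorem to force $A = 0$; your reduction to a finitely generated subfield and the remark that $\wA(\bar k)/\wA(k_s)$ is $p$-power torsion are harmless elaborations of the paper's bare citation of \cite[Thm.~9.1]{FJ}, and your explicit verification that tori are projective in $\ucS$ (via $\wA_{G_1}=0$ in the four-term sequence) fills in a direction the paper leaves to Proposition \ref{prop:proj}. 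Where you genuinely diverge is part (ii): the paper deduces it from Proposition \ref{prop:hom} together with Corollary \ref{cor:pos}, whose proof rests on the group-theoretic fact from \cite[Cor.~5.5.5]{Brion-II} that over a locally finite field every semi-abelian variety satisfies $S = T + A$ for some abelian subvariety $A$; you instead observe that $\wA(k_s) = \wA(\bar{\bF}_p)$ is torsion, so $\wA(k_s)_\bQ = 0$, every classifying map vanishes, and $\ucE \simeq \ucA \times \Rep_\bQ(\Gamma)$. Your route stays entirely inside the linear-algebra model of $\ucS$ and is in fact the one the introduction advertises (``a direct consequence of the Weil--Barsotti isomorphism''), at the price of relying on the equivalence $\ucS \to \ucE$ and the four-term exact sequence, which the paper states without proof; the paper's route instead imports a structure theorem but yields a genuine product decomposition after dividing by a finite subgroup, not only a splitting up to isogeny.
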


\begin{proof}
(i) Let $G$ be a semi-abelian variety, extension of an abelian
variety $A$ by a torus $T$. Denote by $c : X(T) \to \wA(k_s)$
the classifying map, and by 
\[ c_\bQ : X(T)_\bQ \longrightarrow \wA(k_s)_\bQ \]
the corresponding $\bQ$-linear map; recall that $c$ and $c_\bQ$ are 
$\Gamma$-equivariant.
 
If $G$ is projective in $\ucC$, then 
$\Ext^1_{\ucC}(G,T') = 0$ for any torus $T'$. Thus, we have
an exact sequence
\[ 0 \longrightarrow \Hom_{\ucC}(A,T') \longrightarrow \Hom_{\ucC}(G,T') 
\longrightarrow \Hom_{\ucC}(T,T') 
\stackrel{\partial}{\longrightarrow} 
\Ext^1_{\ucC}(A,T') \longrightarrow 0. \]
Moreover, the boundary map $\partial$ may be identified with the map  
\[ \Hom^{\Gamma}(X(T')_\bQ,X(T)_\bQ) \longrightarrow 
\Hom^{\Gamma}(X(T')_\bQ, \wA(k_s)_\bQ). \quad
f \longmapsto c_\bQ \circ f. \] 
Since $\partial$ is surjective, and $X(T')_{\bQ}$ may be chosen 
arbitrarily among finite-dimensional $\bQ$-vector spaces equipped 
with a continuous representation of $\Gamma$, the map $c_\bQ$ 
is surjective as well. In particular, the abelian group $\wA(k_s)$
has finite rank. In view of \cite[Thm.~9.1]{FJ}, this forces 
$A$ to be zero, i.e., $G$ is a torus.

(ii) This follows readily from Proposition \ref{prop:hom} 
and Corollary \ref{cor:pos}.
\end{proof}

\subsection{Product decompositions}
\label{subsec:pd}

In this subsection, we first prove the remaining assertions (ii) and (iii)
of Theorem \ref{thm:all}. Then we describe the isogeny category $\ucC$ 
in characteristic $0$, and its projective or injective objects in arbitrary 
characteristics.

\begin{proposition}\label{prop:prodlin}

\begin{enumerate}

\item[{\rm (i)}] If $k$ is perfect, then the product functor 
$\cM \times \cU \to \cL$ yields an equivalence of categories.

\item[{\rm (ii)}] For any field $k$, the product functor
$\ucT \times \ucU \to \ucL$ yields an equivalence of categories.

\end{enumerate}

\end{proposition}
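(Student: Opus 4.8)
The plan is to deduce (i) directly from Theorem~\ref{thm:lin} together with the orthogonality relations of Proposition~\ref{prop:hom}, and then to bootstrap from (i) to (ii) by passing to the perfect closure via the invariance statement of Theorem~\ref{thm:insep}.

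For (i), the product functor sends a pair $(M,U)$ to the biproduct $M \times U$. Essential surjectivity is precisely the splitting $G = M \times U$ provided by Theorem~\ref{thm:lin} over a perfect field. For full faithfulness I would use that $\cM$ and $\cU$ are full subcategories of the additive category $\cL$, so that a morphism $M_1 \times U_1 \to M_2 \times U_2$ is described by a $2 \times 2$ matrix whose entries are morphisms between the factors. The two off-diagonal entries lie in $\Hom_\cC(M_1,U_2)$ and $\Hom_\cC(U_1,M_2)$, which both vanish by Proposition~\ref{prop:hom}~(i); hence every morphism is block-diagonal, and the product functor is bijective on Hom-groups.

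For (ii), full faithfulness will follow from the same matrix argument once the analogous orthogonality is checked in the isogeny category, namely that $\Hom_{\ucC}(T,U) = 0 = \Hom_{\ucC}(U,T)$ for a torus $T$ and a smooth connected unipotent group $U$. This I would obtain from the presentation of morphisms in $\ucC$ as a direct limit over finite subgroups (Lemma~\ref{lem:equi}~(ii)): every quotient of $U$ by a finite subgroup is again unipotent and every quotient of $T$ by a finite subgroup is again a torus, so each term of the limit vanishes by Proposition~\ref{prop:hom}~(i). As $\ucT$ and $\ucU$ are full in $\ucL$, the cross terms in the matrix decomposition of $\Hom_{\ucL}(T_1 \times U_1, T_2 \times U_2)$ vanish, yielding full faithfulness.

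The real content of (ii) is essential surjectivity, since over an imperfect field a linear group need not split as a torus times a unipotent group. Here I would reduce to the perfect closure $k_i$ of $k$. By Remark~\ref{rem:fe}~(i), a consequence of Theorem~\ref{thm:insep}, base change induces equivalences $\ucT_k \to \ucT_{k_i}$, $\ucU_k \to \ucU_{k_i}$, and $\ucL_k \to \ucL_{k_i}$, and these fit into a square that commutes up to natural isomorphism with the two product functors, because base change preserves products, tori, and unipotent groups. Over the perfect field $k_i$ the lower product functor is essentially surjective: Theorem~\ref{thm:lin} gives $G = M \times U$ for any linear $G$, and in the isogeny category $M$ is isomorphic to its maximal torus $M^0$, since the quotient $M/M^0$ is finite. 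Combined with the full faithfulness proved above, the lower functor is then an equivalence; as three of the four functors in the commuting square are equivalences, so is the upper one. The main obstacle is thereby concentrated in the purely inseparable descent encoded by Theorem~\ref{thm:insep}; granting that result, the rest of the argument is formal.
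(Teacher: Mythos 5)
Your proof is correct and follows the paper's own route: part (i) is exactly the combination of Theorem \ref{thm:lin} (essential surjectivity) with the orthogonality relations of Proposition \ref{prop:hom} (full faithfulness via the block-matrix argument), and part (ii) is deduced from (i) by passing to the perfect closure via Theorem \ref{thm:insep}, which is precisely what the paper does. The only nitpick is that the maximal torus of a group $M$ of multiplicative type is $M^0_{\red}$ rather than $M^0$ (the latter may contain infinitesimal factors such as $\mu_p$ in positive characteristic), but your actual justification --- the quotient of $M$ by its maximal torus is finite, so $M$ becomes a torus in the isogeny category --- is exactly right.
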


\begin{proof}
(i) This follows readily from Theorem \ref{thm:lin} and
Proposition \ref{prop:hom}.

(ii) This is a consequence of (i) in view of Theorem \ref{thm:insep}.
\end{proof}

\begin{proposition}\label{prop:prodpos}
If $\car(k) > 0$, then the product functor $\ucS \times \ucU \to \ucC$ 
yields an equivalence of categories.
\end{proposition}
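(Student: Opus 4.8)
The plan is to verify that the product functor $P : \ucS \times \ucU \to \ucC$, $(S,U) \mapsto S \times U$, is both essentially surjective and fully faithful. Since $\ucS$ and $\ucU$ are by construction full subcategories of $\ucC$ (with objects the semi-abelian varieties, resp.\ the unipotent groups) and $\ucC$ is abelian, I expect the proof to reduce entirely to two results already in hand: the isogeny decomposition of Corollary \ref{cor:pos}, and the orthogonality between divisible and unipotent groups from Lemma \ref{lem:list}.

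First I would establish essential surjectivity. Given a smooth connected algebraic group $G$, Corollary \ref{cor:pos}(i) provides a finite subgroup $F \subset G$ with $G/F \cong S \times U$ for a semi-abelian variety $S$ and a split unipotent group $U$. The quotient $G \to G/F$ is an isogeny, hence an isomorphism in $\ucC$ by Lemma \ref{lem:equi}(iii); thus $G \cong S \times U = P(S,U)$ in $\ucC$.

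The substance of the argument lies in full faithfulness. Fixing objects $(S_1,U_1)$ and $(S_2,U_2)$, I would use that $\ucC$ is additive, so finite products are biproducts and
\[
\Hom_{\ucC}(S_1 \times U_1,\, S_2 \times U_2)
\cong \Hom_{\ucC}(S_1,S_2) \oplus \Hom_{\ucC}(S_1,U_2)
\oplus \Hom_{\ucC}(U_1,S_2) \oplus \Hom_{\ucC}(U_1,U_2).
\]
Under this decomposition $P$ identifies the source $\Hom_{\ucS}(S_1,S_2) \times \Hom_{\ucU}(U_1,U_2)$ with the two diagonal summands; fullness of $\ucS$ and $\ucU$ in $\ucC$ gives $\Hom_{\ucS}(S_1,S_2) = \Hom_{\ucC}(S_1,S_2)$ and likewise for $U$. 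It then remains to show the two off-diagonal terms vanish, and this is where I expect the only real point of the proof to sit: invoking Lemma \ref{lem:list}(iii) in degree $n=0$, which applies because in characteristic $p$ every semi-abelian variety is divisible, to get $\Hom_{\ucC}(U_1,S_2) = 0 = \Hom_{\ucC}(S_1,U_2)$. With both cross-terms gone, $P$ is an isomorphism on morphism groups.

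I do not anticipate a serious obstacle beyond correctly marshalling these inputs; the decisive ingredient is the divisibility of semi-abelian varieties, which holds precisely in positive characteristic (in characteristic $0$ the analogous statement is false, as the vector extensions of \S \ref{subsec:veav} show). Assembling essential surjectivity with full faithfulness then yields the desired equivalence $\ucS \times \ucU \simeq \ucC$.
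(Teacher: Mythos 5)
Your proof is correct, and its skeleton --- essential surjectivity from Corollary \ref{cor:pos}, full faithfulness by splitting $\Hom_{\ucC}(S_1\times U_1,\,S_2\times U_2)$ into four summands and killing the two off-diagonal ones --- is the same as the paper's. The one genuine difference is how the cross-terms are killed. The paper works with explicit representatives in $\cC$: a morphism $S_1 \to U_2/U_2'$ or $U_1 \to S_2/S_2'$ is shown to vanish by combining Lemma \ref{lem:epi} with the orthogonality relations of Proposition \ref{prop:hom} (multiplicative type versus unipotent, abelian varieties versus linear groups). You instead quote Lemma \ref{lem:list}(iii) with $n=0$, i.e., the fact that $\Hom_{\ucC}(U,G)$ and $\Hom_{\ucC}(G,U)$ are simultaneously $p^m$-torsion (since $p^m_U=0$) and $\bQ$-vector spaces (since $G$ is divisible), hence zero. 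Both routes are valid; yours is cleaner in that it never leaves $\ucC$, while the paper's is more elementary in that it does not pass through the $\bQ$-linearity results of Proposition \ref{prop:homdiv}. One small correction to your closing remark: semi-abelian varieties are divisible in \emph{every} characteristic. What is special to $\car(k)>0$ is that unipotent groups are $p$-power torsion (which is what makes Lemma \ref{lem:list}(iii) bite) and, more importantly, that every smooth connected group is isogenous to a product $S\times U$; it is essential surjectivity, not the vanishing of the $\Hom$ cross-terms, that fails in characteristic $0$ on account of vector extensions.
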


\begin{proof}
Let $G$ be a smooth connected algebraic group. By Corollary
\ref{cor:pos}, there exists a finite subgroup $F \subset G$ such
that $G/F \cong S \times U$, where $S$ is a semi-abelian variety,
and $U$ is unipotent. Thus, the product functor $\Pi$ is essentially 
surjective.

Next, let $S_1,S_2$ be semi-abelian varieties, and $U_1,U_2$ smooth
connected unipotent groups. We check that $\Pi$ induces an
isomorphism
\[ \Hom_{\ucC}(S_1, S_2) \times \Hom_{\ucC}(U_1, U_2) \longrightarrow
\Hom_{\ucC}(S_1 \times U_1, S_2 \times U_2), \quad
(\uvarphi,\upsi) \longmapsto \uvarphi \times \upsi. \]

Assume that $\uvarphi \times \upsi = 0$. Choose representatives
$\varphi: S_1 \to S_2/S'_2$, $\psi: U_1 \to U_2/U'_2$, where
$S'_2,U'_2$ are finite. Then 
$\varphi \times \psi : S_1 \times U_1 
\to (S_2 \times U_2)/(S'_2 \times U'_2)$
has finite image, and hence is zero by Lemma \ref{lem:epi}.
So $\uvarphi = \upsi = 0$.

Let $\ugamma \in \Hom_{\ucC}(S_1 \times U_1, S_2 \times U_2)$
be represented by $\gamma : S_1 \times U_1 \to (S_2 \times U_2)/F$,
where $F$ is finite. Then $F \subset S'_2 \times U'_2$ for some 
finite subgroups $S'_2 \subset S_2$, $U'_2 \subset U_2$. Thus, we may
assume that $F = S'_2 \times U'_2$. Then the composite morphisms
\[ S_1 \longrightarrow S_1 \times U_1
\stackrel{\gamma}{\longrightarrow} S_2/S'_2 \times U_2/U'_2 
\longrightarrow U_2/U'_2, \]
\[ U_1 \longrightarrow S_1 \times U_1
\stackrel{\gamma}{\longrightarrow} S_2/S'_2 \times U_2/U'_2 
\longrightarrow S_2/S'_2 \]
are zero by Lemma \ref{lem:epi} and Proposition \ref{prop:hom}.
Thus, $\gamma = \varphi \times \psi$ for some morphisms
$\varphi : S_1 \to S_2/S'_2$, $\psi: U_1 \to U_2/U'_2$.
\end{proof}

Combining Propositions \ref{prop:sabproj} (i) and \ref{prop:prodpos}, 
we obtain readily:

\begin{corollary}\label{cor:prodlocf}
If $k$ is locally finite, then the product functor
\[ \ucT \times \ucA \times \ucU \longrightarrow \ucC \] 
yields an equivalence of categories.
\end{corollary}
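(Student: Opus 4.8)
The plan is to realize the product functor $\ucT \times \ucA \times \ucU \to \ucC$ as the composite of two equivalences of categories that have already been established, and then to invoke the elementary fact that a composite of equivalences is again an equivalence. No new constructions are needed: everything reduces to chaining Propositions \ref{prop:prodpos} and \ref{prop:sabproj}.

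First I would record that the hypotheses of both propositions are met. Since $k$ is locally finite, it is algebraic over $\bF_p$, and in particular $\car(k) = p > 0$; hence Proposition \ref{prop:prodpos} applies and the product functor $\Pi_1 : \ucS \times \ucU \to \ucC$ is an equivalence of categories. On the other hand, local finiteness of $k$ is exactly the hypothesis of Proposition \ref{prop:sabproj}(ii), which asserts that the product functor $\Pi_2 : \ucT \times \ucA \to \ucS$ is an equivalence.

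Second, the factorization step: I would check that the product functor in the statement is naturally isomorphic to the composite
\[ \ucT \times \ucA \times \ucU
\stackrel{\Pi_2 \times \id_{\ucU}}{\longrightarrow} \ucS \times \ucU
\stackrel{\Pi_1}{\longrightarrow} \ucC. \]
On objects the two functors send a triple $(T,A,U)$ to the classes in $\ucC$ of $(T \times A) \times U$ and of $T \times A \times U$ respectively, and these coincide via the canonical associativity isomorphism of direct products; this isomorphism is natural in $(T,A,U)$ because it descends from the associativity constraint in $\cC$ through the quotient functor $Q : \cC \to \ucC$. Now $\Pi_2 \times \id_{\ucU}$ is an equivalence, being the product of the equivalence $\Pi_2$ with the identity functor on $\ucU$, and $\Pi_1$ is an equivalence; hence their composite is an equivalence, and therefore so is the product functor $\ucT \times \ucA \times \ucU \to \ucC$.

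The argument is essentially formal, so I do not anticipate a serious obstacle. The only point requiring minor care is the coherence check in the factorization step, namely verifying that the product functor of the statement agrees with the indicated composite up to a genuine natural isomorphism and not merely objectwise, together with the observation that local finiteness of $k$ forces $\car(k) > 0$ so that Proposition \ref{prop:prodpos} is indeed available.
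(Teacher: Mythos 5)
Your proof is correct and is essentially the paper's own argument: the paper derives the corollary by "combining" Proposition \ref{prop:sabproj} (the locally finite case, i.e.\ part (ii) — the reference to part (i) in the text is evidently a slip) with Proposition \ref{prop:prodpos}, exactly the composition of equivalences you spell out. Your additional remarks (that local finiteness forces $\car(k)>0$, and that the associativity isomorphism is natural) just make explicit what the paper leaves as "readily".
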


\begin{remarks}\label{rem:prodlocf}
(i) With the notation of the above corollary, each of the categories
$\ucT$, $\ucA$, $\ucU$ admits a description of its own. By Proposition 
\ref{prop:tss}, $\ucT$ is equivalent to the category of 
$\bQ$-vector spaces equipped with an automorphism of finite order. 
Also, the isomorphism classes of abelian varieties over a finite 
field are classified by the Honda-Tate theorem (see 
\cite{Honda, Tate}); their endomorphism rings are investigated in 
\cite{Waterhouse}. Finally, the structure of $\ucU$ (obtained
in \cite[\S V.3.6.7]{DG}) has been described in Remark \ref{rem:fe}.

(ii) Combining Lemma \ref{lem:fin}, Theorem \ref{thm:lin} and 
Lemma \ref{lem:des}, one may show that the product functor 
$\cM/\cI \cM \times \cU/ \cI \cU \to \cL/\cI$
yields an equivalence of categories. Here $\cI$ denotes the
category of infinitesimal algebraic groups, and $\cI \cM$
(resp.~$\cI \cU$) the full subcategory of infinitesimal groups 
of multiplicative type (resp.~unipotent).
\end{remarks}

Next, assume that $\car(k) = 0$. Then every algebraic group
is isogenous to a fibered product $E \times_A S$, where
$E$ is a vector extension of the abelian variety $A$, and
$S$ is semi-abelian with Albanese variety isomorphic to $A$ 
(see e.g. Remark \ref{rem:zero}). This motivates the consideration of
the fibered product $\ucV \times_{\ucA} \ucS$: this is the category
with objects the triples $(E,S,\uf)$, where $E$ is a vector extension
of an abelian variety, $S$ a semi-abelian variety, and 
$\uf: A(E) \to A(S)$ an $\ucA$-isomorphism. The morphisms from
$(E_1,S_1,\uf_1)$ to $(E_2,S_2,\uf_2)$ are those pairs of 
$\ucC$-morphisms $\uu: E_1 \to E_2$, $\uv: S_1 \to S_2$ such that
the square 
\[ \CD
A(E_1) @>{\uA(\uu)}>> A(E_2) \\
@V{\uf_1}VV @V{\uf_2}VV \\
A(S_1) @>{\uA(\uv)}>> A(S_2) \\
\endCD \]
commutes in $\ucA$. 

\begin{proposition}\label{prop:prodzero}
If $\car(k) = 0$, then $\ucC$ is equivalent to 
$\ucV \times_{\ucA} \ucS$.
\end{proposition}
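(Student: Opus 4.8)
The plan is to construct an explicit functor $\Phi : \ucC \to \ucV \times_{\ucA} \ucS$ and show it is an equivalence by verifying essential surjectivity and full faithfulness; the backbone is the fibered product decomposition of Remark \ref{rem:zero}, transported to the isogeny category. To build $\Phi$, take a smooth connected $G$: the inclusion of the maximal torus is a monomorphism $\uT(G) \to G$ in $\ucC$ (Proposition \ref{prop:proj}), and I let $V(G)$ be its cokernel; likewise, using the unipotent radical functor (Proposition \ref{prop:ru}), I let $W(G)$ be the cokernel of $\uR_u(G) \to G$. By the characteristic-zero structure theory (Theorem \ref{thm:zero}), $V(G)$ is a vector extension of an abelian variety, hence an object of $\ucV$, while $W(G)$ is an extension of an abelian variety by a group of multiplicative type, hence isomorphic in $\ucC$ to a semi-abelian variety, an object of $\ucS$. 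Since $\uT(G)$ and $\uR_u(G)$ are killed by the Albanese functor, the quotient maps induce canonical identifications $\uA(V(G)) \cong \uA(G) \cong \uA(W(G))$ (Proposition \ref{prop:inj}, Corollary \ref{cor:vec}); taking $\uf_G$ to be the resulting isomorphism, I set $\Phi(G) := (V(G), W(G), \uf_G)$. Functoriality is immediate, as $\uT$, $\uR_u$, $\uA$ are functors and the identifications are natural.

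For \emph{essential surjectivity}, given $(E,S,\uf)$ I use Proposition \ref{prop:homdiv} to represent $\uf$ by an honest isogeny $g : A(E) \to A(S)$ in $\cC$, and form the scheme-theoretic fibered product $G := E \times_{A(S)} S$ along $g \circ \alpha_E$ and $\alpha_S$ (replacing $G$ by its neutral component if needed). Then $G$ is smooth and connected, its maximal torus is that of $S$ and its unipotent radical is that of $E$, and the two projections give $G/\uT(G) \cong E$, $G/\uR_u(G) \cong S$. The induced Albanese identification differs from $\uf$ only by the integer that was cleared to produce $g$, which is an isomorphism in the isogeny categories; so $\Phi(G) \cong (E,S,\uf)$.

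For \emph{full faithfulness}, the crucial input is that $G$ is the categorical pullback $V(G) \times_{\uA(G)} W(G)$ in the abelian category $\ucC$: Remark \ref{rem:zero} supplies an isomorphism $G \cong G/M \times_A G/U$ in $\cC$, and the quotient map $G/M \to V(G) = G/\uT(G)$ is an isogeny, hence an isomorphism in $\ucC$ (Lemma \ref{lem:equi}). Faithfulness then follows because $G \to V(G) \oplus W(G)$ is a monomorphism in $\ucC$, its kernel being a subgroup that is simultaneously of multiplicative type and unipotent, hence trivial (Proposition \ref{prop:hom}); so a morphism inducing $0$ on both factors is $0$. Fullness follows from the universal property of the pullback: a pair $(\uu,\uv)$ with $\uA(\uu) = \uA(\uv)$ produces two morphisms $G \to V(G')$ and $G \to W(G')$ that agree over $\uA(G')$, hence a unique $\uh : G \to G'$, and composing with the epimorphic quotient maps shows $\uh$ induces $\uu$ and $\uv$.

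The step requiring the most care, and the \emph{main obstacle}, is identifying $G$ with the categorical pullback in $\ucC$ while matching the Albanese maps occurring there with those used to define $\ucV \times_{\ucA} \ucS$; once this is secured, faithfulness and fullness become formal consequences of the universal property. The one genuinely delicate bookkeeping point is the passage $G/M \rightsquigarrow G/\uT(G)$ together with the clearing of denominators in $\uf$, namely checking that the scalar ambiguities created by inverting isogenies are absorbed by isomorphisms in $\ucV \times_{\ucA} \ucS$ (multiplication by $n$ being invertible there); the remaining verifications are routine diagram chases.
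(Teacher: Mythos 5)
Your proof is correct and follows exactly the route the paper intends: the paper omits this argument, saying only that it is ``similar to that of Proposition \ref{prop:prodpos}'', and your write-up supplies the expected details --- essential surjectivity from the decomposition of Remark \ref{rem:zero} (equivalently Theorem \ref{thm:zero}), and full faithfulness from the orthogonality of $T(G)$ and $R_u(G)$; your packaging of the latter via the universal property of the kernel of $V(G')\oplus W(G')\to \uA(G')$, which is preserved by the exact quotient functor $Q$, is a clean and valid way to organize the Hom computation that the model proof does by hand. The only slip is directional: since $T(G)\subset M$, the isogeny goes $V(G)=G/T(G)\to G/M$ rather than the other way, which of course does not affect the isomorphism in $\ucC$.
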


The proof is similar to that of Proposition \ref{prop:prodpos}, 
and will be omitted. Note that the descriptions of $\ucV$ and $\ucS$ 
in terms of linear algebra, obtained in \S \ref{subsec:veav} and
\S \ref{subsec:sav}, can also be reformulated in terms of 
fibered products of categories.

Returning to an arbitrary field $k$, we obtain:

\begin{theorem}\label{thm:proj}
The projective objects of $\ucC$ are exactly:

\begin{itemize}

\item the linear algebraic groups, if $\car(k) = 0$.

\item the semi-abelian varieties, if $k$ is locally finite.

\item the tori, if $\car(k) >0$ and $k$ is not locally finite.

\end{itemize}

\end{theorem}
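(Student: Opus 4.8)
The plan is to deduce all three cases from the product decompositions of $\ucC$ already at our disposal, identifying the projectives of each factor. First the easy inclusions. When $\car(k) = 0$, a linear group is up to isogeny a product $T \times U$ of a torus and a unipotent group (Theorem \ref{thm:lin} and Proposition \ref{prop:prodlin}); since tori are projective in $\ucC$ (Proposition \ref{prop:proj}) and unipotent groups are projective in $\ucC$ (Proposition \ref{prop:ru}), and products of projectives are projective, every linear group is projective. When $\car(k) = p > 0$ I would use the equivalence $\ucS \times \ucU \to \ucC$ (Proposition \ref{prop:prodpos}), under which an object is projective if and only if each factor is projective in $\ucS$, resp.\ in $\ucU$. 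Tori are projective in $\ucS$ (Proposition \ref{prop:sabproj}(i)), and if $k$ is locally finite then $\ucS \cong \ucT \times \ucA$ is semisimple (Proposition \ref{prop:sabproj}(ii)), so every semi-abelian variety is projective there; this yields the stated projectivity of tori (resp.\ of all semi-abelian varieties).

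The crux is the two converses. In positive characteristic the key lemma is that \emph{$\ucU$ has no non-zero projective object}. Granting this, Proposition \ref{prop:prodpos} settles both remaining cases simultaneously: a projective object of $\ucC$ corresponds to a pair $(S,U)$ with $U$ projective in $\ucU$, hence $U = 0$, while $S$ is a torus when $k$ is not locally finite (Proposition \ref{prop:sabproj}(i)) and an arbitrary semi-abelian variety when $k$ is locally finite (semisimplicity of $\ucS$). To prove the lemma I would reduce to $k$ perfect via Theorem \ref{thm:insep} and then invoke the Dieudonn\'e description of Remark \ref{rem:fe}(ii): $\ucU$ is anti-equivalent to the category of finitely generated $\bD_{(V)}$-modules killed by a power of $V$. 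Since an anti-equivalence exchanges projectives and injectives, it suffices to show this module category has no non-zero injective, which is the exact analogue of the fact that the category of finite abelian $p$-groups has no non-zero injective: choosing a non-zero socle element and extending it along the non-split length-two self-extension of the simple module (corresponding to the Witt extension of Example \ref{ex:nonsplit}(ii)) shows any injective $M$ satisfies $M = VM$, whence $M = 0$ as $V$ is nilpotent on $M$.

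In characteristic $0$ the remaining converse is that a projective $G$ is linear, i.e.\ $A := \uA(G) = 0$. Writing the Albanese sequence $0 \to L \to G \to A \to 0$ in $\ucC$ with $L \cong T_0 \times U$ linear (Theorem \ref{thm:zero}), I would apply $\Hom_{\ucC}(-,T)$ for a torus $T$. As $\Ext^1_{\ucC}(L,T) = 0$ (Lemma \ref{lem:list}(iv)) and $\Hom_{\ucC}(U,T) = 0$ (Proposition \ref{prop:hom}), this gives $\Ext^1_{\ucC}(G,T) \cong \Coker\big(\Hom_{\ucC}(T_0,T) \to \Ext^1_{\ucC}(A,T)\big)$; under the character description of $\ucT$ (Proposition \ref{prop:tss}) and the Weil--Barsotti isomorphism $\Ext^1_{\ucC}(A,T) \cong \Hom^{\Gamma}(X(T)_{\bQ}, \wA(k_s)_{\bQ})$ of \S\ref{subsec:sav}, the connecting map becomes $f \mapsto c \circ f$ for the fixed $\Gamma$-map $c : X(T_0)_{\bQ} \to \wA(k_s)_{\bQ}$ classifying the torus part of $G$. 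Projectivity forces surjectivity for every $T$, but $\wA(k_s)_{\bQ}$ is infinite-dimensional over $\bQ$ when $A \neq 0$ (a characteristic-$0$ field is not locally finite, so \cite{FJ} applies) while $\Im(c)$ is finite-dimensional. Taking a finite-dimensional $\Gamma$-stable $W \not\subseteq \Im(c)$ and the torus $T$ with $X(T)_{\bQ} = W$, the inclusion $W \hookrightarrow \wA(k_s)_{\bQ}$ is not in the image, so $\Ext^1_{\ucC}(G,T) \neq 0$, a contradiction; hence $A = 0$ and $G$ is linear.

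The main obstacle is the positive-characteristic lemma that $\ucU$ has no non-zero projectives: unlike the projectivity of tori and the characteristic-$0$ computation, it is not formal and seems to require the Dieudonn\'e-theoretic translation, after which it reduces to the elementary (but genuinely characteristic-dependent) vanishing of injectives in a category of finite-length torsion modules.
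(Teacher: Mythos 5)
The gap is in your key positive-characteristic lemma that $\ucU$ has no non-zero projectives. After dualizing to Dieudonn\'e modules, extending a map $S \to M$ from the simple module along the non-split extension $0 \to S \to E \to S \to 0$ only shows that every \emph{socle} element of an injective $M$ lies in $VM$, i.e. $\Ker(V|_M) \subseteq VM$; it does not show $M = VM$. And $\Ker(V|_M) \subseteq VM$ does not force $M = 0$: the module corresponding to $W_2$ itself satisfies it, and in your own analogy the condition $M[p] \subseteq pM$ is satisfied by $\bZ/p^2$, a non-zero finite abelian $p$-group. The standard argument needs an element of maximal order rather than a socle element: choose $x$ with $V^{n-1}x \neq 0 = V^n x$ for $n$ maximal, embed the cyclic module it generates into the next longer cyclic module (i.e. use all the extensions $0 \to \bG_a \to W_{n+1} \to W_n \to 0$, not just the case $n=1$), and contradict maximality. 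Alternatively, since the category is hereditary, quotients of injectives are injective, so induction on length shows an injective must be semisimple, after which your socle argument does apply. Either repair is routine, but as written the inference from ``the socle is contained in $VM$'' to ``$M = VM$'' is a non sequitur.

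Everything else matches the paper's proof in substance: the easy inclusions, the reduction via the decomposition into a semi-abelian part and a unipotent part, and the converse for $k$ not locally finite via the surjectivity of $f \mapsto c_{\bQ}\circ f$ against the Frey--Jarden theorem are exactly the paper's steps (the paper phrases the latter by quoting the proof of Proposition \ref{prop:sabproj}(i)). The one place you genuinely diverge is the key lemma above, which the paper proves without Dieudonn\'e theory: if a non-zero unipotent group $U$ were projective, then writing $U$ up to isogeny as an extension of some $H$ by $\bG_a$ and combining $\Ext^1_{\ucC}(U,\bG_a)=0$ with $\Ext^2_{\ucC}(H,\bG_a)=0$ (Lemma \ref{lem:van}) forces $\Ext^1_{\ucC}(\bG_a,\bG_a)=0$, contradicting Example \ref{ex:nonsplit}(ii). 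That route is shorter and stays inside $\ucC$, at the cost of invoking $\hd(\ucC)\le 1$; yours, once repaired, is a workable alternative driven by the same non-split Witt extension.
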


\begin{proof}
Let $G$ be a smooth connected algebraic group. As a consequence
of Theorem \ref{thm:zero} and Proposition \ref{prop:prodpos}, 
we have an exact sequence in $\ucC$
\[ 0 \longrightarrow U \longrightarrow G \longrightarrow S 
\longrightarrow 0, \]
where $U$ is smooth, connected, and unipotent, and $S$ is a
semi-abelian variety.

If $G$ is projective in $\ucC$, then $\Ext^1_{\ucC}(G,T') = 0$
for any torus $T'$. Since $\Hom_{\ucC}(U,T') = 0$ (as a consequence 
of Proposition \ref{prop:hom}) and $\Ext^1_{\ucC}(U,T') = 0$ 
(by Lemma \ref{lem:list}), the long exact sequence for Ext groups 
yields that $\Ext^1_{\ucC}(S,T') = 0$ as well. By arguing as in 
the proof of Proposition \ref{prop:sabproj}, this forces
either $A(S)$ to be zero, or $k$ to be locally finite.

If $A(S) = 0$, then $G$ is linear, and hence $G \cong T \times U$
in $\ucC$ by Proposition \ref{prop:prodlin}. Moreover, tori are 
projective in $\ucC$ by Proposition \ref{prop:proj}; thus, we may 
assume that $G$ is unipotent. If $\car(k) = 0$, then every unipotent
group is projective, as follows e.g. from Lemma \ref{lem:list}. 
If $\car(k) > 0$ and $G \neq 0$, then there exists an exact sequence 
\[ 0 \longrightarrow H \longrightarrow G \longrightarrow \bG_a 
\longrightarrow 0 \] 
in $\cC$. Since $\Ext^1_{\ucC}(G,\bG_a) = 0 = \Ext^2_{\ucC}(H,\bG_a)$, 
it follows that $\Ext^1_{\ucC}(\bG_a,\bG_a) = 0$. But this contradicts 
Example \ref{ex:nonsplit} (ii), hence $G = 0$.

On the other hand, if $k$ is locally finite, then 
$G \cong T \times A \times U$ in $\ucC$
(by Corollary \ref{cor:prodlocf}) and it follows as above
that $U$ is zero. Conversely, every semi-abelian variety
is projective in $\ucC$, by Corollary \ref{cor:prodlocf} again.
\end{proof}

\begin{corollary}\label{cor:proj}
If $\car(k) > 0$, then $\cC$ has no non-zero projective objects.
\end{corollary}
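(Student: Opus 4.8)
The plan is to prove, for $\car(k)=p>0$, that every projective object $P$ of $\cC$ is finite, and then that a nonzero finite group scheme is never projective; together these give $P=0$. The bridge to the isogeny category, and the step I expect to be the main obstacle, is the transfer principle: \emph{if $P$ is projective in $\cC$, then $Q(P)$ is projective in $\ucC$}. Since $Q$ has no exact right adjoint, this cannot be read off formally. Instead I would take an arbitrary class $\eta\in\Ext^1_{\ucC}(Q(P),Y)$ with $Y$ smooth connected, represent it by a short exact sequence in $\ucC$, and apply Proposition \ref{prop:exa} to obtain a genuine short exact sequence $0\to Y'\to E'\to P'\to 0$ in $\cC$ together with epimorphisms with finite kernels $f_Y\colon Y\to Y'$ and $f_P\colon P\to P'$ realizing $\eta$ up to the induced $\ucC$-isomorphisms. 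As $f_P$ is an isomorphism in $\ucC$ (Lemma \ref{lem:equi}), it suffices to split the bottom sequence in $\ucC$. Pulling it back along the honest $\cC$-morphism $f_P$ yields an extension of $P$ by $Y'$, which splits in $\cC$ because $\Ext^1_\cC(P,Y')=0$; applying the exact functor $Q$ and using that $Q(f_P)^\ast$ is an isomorphism on $\Ext^1_{\ucC}$ then forces $\eta=0$.

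Granting the transfer, Theorem \ref{thm:proj} shows that $Q(P)$ is a torus, or a semi-abelian variety when $k$ is locally finite; in particular $Q(P)$ is divisible. On the other hand I would run the divisibility argument from the proof of Theorem \ref{thm:zero}(iii): projectivity of $P$ makes $\Hom_\cC(P,H)$ divisible for every divisible $H$, since applying $\Hom_\cC(P,-)$ to $0\to H[n]\to H\stackrel{n_H}{\longrightarrow} H\to 0$ and using $\Ext^1_\cC(P,H[n])=0$ shows multiplication by $n$ is surjective. Writing $P/F\cong S\times U$ by Corollary \ref{cor:pos} and projecting onto the Albanese variety $A$ of $S$, the inclusion $\End_\cC(A)\hookrightarrow\Hom_\cC(P,A)$ has torsion cokernel (Proposition \ref{prop:hom}, as the kernel of $P\to A$ is linear); since $\End_\cC(A)$ is free of finite rank and $\Hom_\cC(P,A)$ is divisible, this forces $A=0$. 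Projecting then onto the maximal torus $T$ of $S$, finite generation of $\Hom_\cC(P,T)$ together with its divisibility gives $T=0$. Hence $S=0$, so $Q(P)\cong Q(U)$ is unipotent. But a unipotent object of $\ucC$ that is also a torus or a semi-abelian variety must vanish, since $\Hom_{\ucC}(G,U)=0$ whenever $G$ is divisible and $U$ unipotent (Lemma \ref{lem:list}(iii)); therefore $Q(P)=0$, i.e.\ $P$ is finite.

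Finally I would check that a nonzero finite group scheme is not projective in $\cC$, which is elementary: such a group sits in a nonsplit extension with itself as quotient (reducing to a simple finite group scheme and using a length-two self-extension of the appropriate type, $\bZ/\ell^2$, $\mu_{p^2}$, $\alpha_{p^2}$, or a Witt-vector extension), so the corresponding $\Ext^1_\cC$ is nonzero. Combining the three steps yields $P=0$. The only delicate point is the first paragraph: transferring projectivity from $\cC$ to $\ucC$ in the absence of an exact right adjoint to $Q$, for which Proposition \ref{prop:exa} and the pullback along the isogeny $f_P$ are the essential tools; the divisibility computation of the second paragraph is a direct adaptation of the characteristic-zero argument, and the finite case is routine.
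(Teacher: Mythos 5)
Your first two steps are sound, and they genuinely diverge from the paper: the explicit transfer of projectivity from $\cC$ to $\ucC$ via Proposition \ref{prop:exa} and pullback along the isogeny $f_P$ is correct (the paper only does this implicitly, for the unipotent quotient $U/F$), and the divisibility computation killing the semi-abelian part is exactly the paper's own argument. The problem is your third step. Knowing that $P$ is finite, you propose to contradict projectivity by exhibiting a nonsplit length-two self-extension of a \emph{simple quotient} $\Sigma$ of $P$. But $\Ext^1_{\cC}(\Sigma,\Sigma)\neq 0$ does not contradict projectivity of $P$: the obstruction you actually need is that the quotient map $P\to\Sigma$ fails to lift along some epimorphism $E\to\Sigma$, and the pullback of a nonsplit extension of $\Sigma$ to $P$ can perfectly well split. (Take $P=\bZ/\ell^2$ and $\Sigma=\bZ/\ell$: the extension $\bZ/\ell^2\to\bZ/\ell$ is nonsplit, yet $P\to\Sigma$ lifts via the identity.) To make this work you need, for each simple $\Sigma$, essential covers $E_N\to\Sigma$ of order exceeding $|P|$, so that a lift would force $P\twoheadrightarrow E_N$; and over an arbitrary field the simple finite group schemes are twisted forms, for which your list $\bZ/\ell^2$, $\mu_{p^2}$, $\alpha_{p^2}$, $W_2$ does not suffice. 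In particular, for a simple \'etale $\Sigma$ given by an irreducible $\bF_\ell[\Gamma_k]$-module, the asserted "length-two self-extension" amounts to a flat $\bZ/\ell^2$-lift of the Galois representation, whose existence is obstructed in general; one has to produce essential covers differently (e.g.\ via reductions of a projective $\bZ_\ell[\Gamma_k]$-cover and Nakayama). None of this is in your sketch, so as written the finite case is not proved.

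The gap is repairable, and in fact the tools from your second step already suffice if you finish as the paper does, without ever isolating the finite case. The paper shows directly that a projective $G$ satisfies $\Hom_{\cC}(G,T)=0$ for every torus $T$ and $\Hom_{\cC}(G,A)=0$ for every abelian variety $A$ (finitely generated, resp.\ bounded-torsion-modulo-finite-rank, plus divisible). It then reduces to $0\to M\to G\to U\to 0$ with $M$ of multiplicative type and $U$ unipotent, kills $U$ up to an infinitesimal group by the isogeny-category argument you also use, rules out a nonzero infinitesimal $U$ by the epimorphism $U\to\alpha_p$ followed by the embedding of $\alpha_p$ into a supersingular elliptic curve (a nonzero element of $\Hom_{\cC}(G,A)=0$), and finally kills $M$ because the whole remaining group is of multiplicative type, hence \emph{contained} in a torus, and the inclusion would be a nonzero element of $\Hom_{\cC}(G,T)=0$. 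I recommend replacing your third step by this finish: it avoids entirely the classification of nonsplit extensions of arbitrary simple finite group schemes over a non-closed field.
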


\begin{proof}
Let $G$ be a projective object of $\cC$. By the claim in the proof of
Theorem \ref{thm:zero} (iii), the (abstract) group $\Hom_{\cC}(G,H)$ is 
divisible for any divisible group $H$. By arguing as in that proof, it
follows that $G$ is linear. Also, for any torus $T$, the group
$\Hom_{\cC}(G,T)$ is finitely generated and divisible, hence zero.

Next, we show that $G$ is connected. Indeed,  the quotient $G/G^0$ is 
finite and \'etale, hence contained in a torus $T$.  As $\Hom_{\cC}(G,T) = 0$, 
this yields the assertion.

In view of Theorem \ref{thm:lin}, we obtain an exact sequence
\[ 0 \longrightarrow M \longrightarrow G \longrightarrow U  
\longrightarrow 0, \] 
where $M$ is of multiplicative type, and $U$ is unipotent and
connected. We now show that $U$ is projective in the unipotent
category $\cU$. Indeed, given an exact sequence 
$U_1 \to U_2 \to 0$ in $\cU$ and a morphism $\varphi : U \to U_2$,
we may lift the composition $\psi : G \to U \to U_2$ to a morphism
$\gamma : G \to U_1$. Then $\gamma(M) = 0$ (Proposition \ref{prop:hom})
and hence $\gamma$ factors through a morphism $\delta: U \to U_1$, 
which lifts $\varphi$.
 
Let $F$ be an infinitesimal subgroup of $U$ such that $U/F$ is smooth; then
$U/F$ is an object of $\cU$, and one easily checks by using Proposition
\ref{prop:exa} that $U/F$ is projective in $\ucU$. In view of Proposition
\ref{prop:prodpos} and Theorem \ref{thm:proj}, it follows that $U/F$ is zero,
hence $U$ is infinitesimal. If $U \neq 0$ then there exists an epimorphism
$U \to \alpha_p$ and hence a non-zero morphism $U \to A$ for some
(supersingular) abelian variety $A$. Since $\Hom_{\cC}(G,A) = 0$,
this yields a contradiction. Thus, $U = 0$, i.e., $G$ is of multiplicative 
type, hence contained in a torus, hence zero.
\end{proof}

\begin{theorem}\label{thm:inj}
The injective objects of $\ucC$ are exactly the semi-abelian varieties
if $k$ is locally finite, and the abelian varieties otherwise.
\end{theorem}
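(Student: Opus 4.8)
The plan is to test injectivity on simple objects. Since $\hd(\ucC)=1$ (Theorem \ref{thm:all}(v)) and every object of $\ucC$ is a finite iterated extension of simple objects (Proposition \ref{prop:simple}), an object $I$ is injective if and only if $\Ext^1_{\ucC}(S,I)=0$ for every simple $S$, i.e.\ for $S=\bG_a$, the simple tori, and the simple abelian varieties. Tori contribute nothing, as $\Ext^1_{\ucC}(T,I)=0$ for all tori $T$ (Lemma \ref{lem:list}(i)). For $\bG_a$ I split by characteristic: in characteristic $0$, $\bG_a$ is projective (Proposition \ref{prop:ru}(iii)), so $\Ext^1_{\ucC}(\bG_a,I)=0$ automatically; in positive characteristic I write $I\cong S_I\times U_I$ with $S_I$ semi-abelian and $U_I$ unipotent (Proposition \ref{prop:prodpos}), note $\Ext^1_{\ucC}(\bG_a,S_I)=0$ since $S_I$ is divisible (Lemma \ref{lem:list}(iii)), and observe that if $U_I\neq 0$ then a quotient $U_I\to\bG_a$ in $\ucC$ together with $\Ext^2_{\ucC}=0$ (Lemma \ref{lem:van}) yields a surjection $\Ext^1_{\ucC}(\bG_a,U_I)\twoheadrightarrow\Ext^1_{\ucC}(\bG_a,\bG_a)\neq 0$ (Example \ref{ex:nonsplit}(ii)). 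Thus in positive characteristic every injective object is semi-abelian. In characteristic $0$ I remove the unipotent part instead by passing to the maximal torus: from $0\to T\to I\to I/T\to 0$ with $T=T(I)$ and $I/T$ an object of $\ucV$, the long exact sequences for $\bG_a$ and for a simple abelian variety $A$, combined with $\Ext^2_{\ucC}(-,T)=0$, give surjections $\Ext^1_{\ucC}(\bG_a,I)\twoheadrightarrow\Ext^1_{\ucC}(\bG_a,I/T)$ and $\Ext^1_{\ucC}(A,I)\twoheadrightarrow\Ext^1_{\ucC}(A,I/T)$; since $\ucV$ is closed under extensions (Lemma \ref{lem:vec}) these agree with the groups computed in $\ucV$, so $I$ injective forces $I/T$ injective in $\ucV$, whence $I/T$ is an abelian variety by Corollary \ref{cor:projinj}(ii), i.e.\ $R_u(I)=0$. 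In every characteristic, therefore, an injective object is a semi-abelian variety.

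When $k$ is locally finite I then conclude immediately from the product decomposition $\ucC\simeq\ucT\times\ucA\times\ucU$ (Corollary \ref{cor:prodlocf}): injectivity is equivalent to injectivity in each factor, the categories $\ucT$ and $\ucA$ are semisimple (so all their objects are injective), and $\ucU$ has no nonzero injective by the $\bG_a$-argument above. Hence the injectives are exactly the objects with vanishing unipotent component, i.e.\ the semi-abelian varieties; this also gives the converse, that every semi-abelian variety is injective.

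It remains, for $k$ not locally finite (in particular whenever $\car(k)=0$), to show that an injective semi-abelian $I$ has trivial torus, so that $I$ is an abelian variety (the converse being Proposition \ref{prop:inj}(ii)). Write $A=A(I)$, $T=T(I)$, $M=X(T)_\bQ$, with classifying map $c\colon M\to\wA(k_s)_\bQ$. For $I$ semi-abelian one has $\Ext^1_{\ucC}=\Ext^1_{\ucS}$ (again $\ucS$ is closed under extensions), and the exact sequence of \S\ref{subsec:sav} specializes, for any abelian variety $B$, to $\Ext^1_{\ucS}(B,I)\cong \Coker\left( \Hom_{\ucA}(B,A)\stackrel{v\mapsto \hat v\circ c}{\longrightarrow}\Hom^{\Gamma}(M,\wB(k_s)_\bQ)\right)$. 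Choosing $B$ not isogenous to any simple factor of $A$ kills the source, so injectivity of $I$ forces $\Hom^{\Gamma}(M,\wB(k_s)_\bQ)=0$ for every such $B$. To contradict this when $M\neq 0$, I enlarge a splitting field of $T$ to a finite separable extension $L/k$ over which a chosen elliptic curve $E_0$, not isogenous over $\bar{k}$ to any factor of $A$, acquires a point of infinite order — available since $\widehat{E}_0(k_s)$ has infinite rank by \cite{FJ} — and take $B=\R_{L/k}E_0$. Then $\wB(k_s)_\bQ\cong \mathrm{Ind}_{\Gamma_L}^{\Gamma}\widehat{E}_0(k_s)_\bQ$ and $M$ is trivial on $\Gamma_L$, so Frobenius reciprocity identifies $\Hom^{\Gamma}(M,\wB(k_s)_\bQ)$ with $\Hom_\bQ(M,\widehat{E}_0(L)_\bQ)\neq 0$, while $\Hom_{\ucA}(B,A)=\Hom_L(E_0,A_L)_\bQ=0$; this contradicts injectivity, so $M=0$ and $I$ is an abelian variety.

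I expect this last step to be the main obstacle. The reductions of the first two paragraphs are formal, driven by $\hd(\ucC)=1$ and the product decompositions, and the locally finite case is then essentially automatic. The real content is converting the vanishing $\Hom^{\Gamma}(M,\wB(k_s)_\bQ)=0$ into $M=0$: this requires producing an abelian variety $B$ that simultaneously realizes a constituent of the Galois representation $M$ inside a Mordell--Weil group and has no isogeny factor in common with $A$, and it is precisely the injective-side counterpart of the surjectivity argument in Proposition \ref{prop:sabproj}(i). The careful construction of $B$ (here via a Weil restriction and a point of infinite order), and the verification that the arithmetic input \cite{FJ} on the infinitude of $\wB(k_s)$ for $k$ not locally finite suffices, is where the genuine difficulty lies.
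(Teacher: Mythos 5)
Your proposal is correct in outline and reaches the theorem by a genuinely different route in the decisive step. For the reduction to semi-abelian varieties you test injectivity against the simple objects and use the product decompositions (plus, in characteristic $0$, the maximal torus together with Corollary \ref{cor:projinj}(ii)), whereas the paper passes to the largest semi-abelian quotient of $G$ (using that quotients of injectives are injective, since $\hd(\ucC)=1$) and treats the unipotent part only at the very end; these are equivalent reorganizations. The real divergence is in proving $X(T)=0$ when $k$ is not locally finite. The paper keeps $B$ arbitrary, removes the $\Gamma$-equivariance by twisting $B$ by arbitrary $\Gamma$-lattices $M$, and then compares ranks: the source $\Hom_{\cC_{k_s}}(B_{k_s},A_{k_s})$ has finite rank while the target $\Hom(X(T),\wB(k_s))$ has infinite rank whenever $X(T)\neq 0$, since $\wB(k_s)$ does by \cite{FJ}; surjectivity then forces $X(T)=0$. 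You instead annihilate the source by choosing $B$ with $\Hom_{\ucA}(B,A)=0$ and make the target visibly non-zero via a Weil restriction and Frobenius reciprocity; your $\R_{L/k}E_0$ is exactly the paper's twist $B(M)$ for $M=\bZ[\Gamma/\Gamma_L]$, so the two arguments are cousins, but yours trades the rank comparison for an exact vanishing of the source. The price is an extra arithmetic input that you assert without proof: the existence of an elliptic curve $E_0$, defined over a finite separable extension of $k$, none of whose conjugates is $\bar k$-isogenous to a simple factor of $A$. This is true (in characteristic $0$ take curves with complex multiplication by infinitely many distinct imaginary quadratic fields; in characteristic $p$ take ordinary curves over finite fields with distinct endomorphism algebras, which remain non-isogenous over $\bar k$ by rigidity of homomorphisms), but it must be justified, and it is precisely what the paper's rank argument is designed to avoid. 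Two smaller points to make explicit: the vanishing $\Hom_{\ucA}(\R_{L/k}E_0,A)=0$ is most safely obtained by computing over $k_s$, where $(\R_{L/k}E_0)_{k_s}$ is a product of conjugates of $E_0$ (the adjunction you invoke goes the other way for Weil restriction, although it does hold here via duality); and in characteristic $0$ the quotient $I/T(I)$ is only isogenous to an object of $\cV$, which suffices since you work in $\ucV$.
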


\begin{proof}
Let $G$ be an injective object of $\ucC$, and  
$0 \to G_1 \to G \to G_2 \to 0$ 
an exact sequence in $\ucC$. Since $\hd(\ucC) = 1$, the natural
map $\Ext^1_{\ucC}(H, G) \to \Ext^1_{\ucC}(H,G_2)$ is an epimorphism
for any object $H$ of $\ucC$. Thus, every quotient of $G$ is injective.
In particular, so is the largest semi-abelian quotient $S$ (Remark
\ref{rem:zero} and Theorem \ref{thm:pos}).

We now adapt the argument of Corollary \ref{cor:projinj}. Recall that
 $S$ lies in a unique extension 
\[ 0 \longrightarrow T \longrightarrow S \longrightarrow A \longrightarrow 0, \]
classified by a $\Gamma$-equivariant morphism 
\[ c : X(T) \longrightarrow \wA(k_s). \]
Since $S$ is injective in $\ucC$, we have $\Ext^1_{\ucC}(B,S) = 0$ 
for any abelian variety $B$, and hence the connecting homomorphism
\[ \partial : \Hom_{\ucC}(B,A) \longrightarrow \Ext^1_{\ucC}(B,T) \]
is surjective. Under the isomorphisms 
\[ \Hom_{\ucC}(B,A) \cong \bQ \otimes_{\bZ} \Hom_{\cC}(B,A), \quad 
\Ext^1_{\ucC}(B,T) \cong 
\bQ \otimes_{\bZ} \Hom^{\Gamma}(X(T),\wB(k_s)) \]
(Proposition \ref{prop:homdiv} and \S \ref{subsec:sav}), the map $\partial$ 
is identified with the composition of the natural map
\[ \bQ \otimes_{\bZ} \Hom_{\cC}(B,A) \longrightarrow 
\bQ \otimes_{\bZ} \Hom^{\Gamma}(\wA(k_s),\wB(k_s)) \]
with the map
\[ \gamma : \bQ \otimes_{\bZ} \Hom^{\Gamma}(\wA(k_s),\wB(k_s)) 
\longrightarrow \bQ \otimes_{\bZ} \Hom^{\Gamma}(X(T),\wB(k_s)),
\quad u \longmapsto u \circ c. \]

Next, consider a free abelian group $M$ of finite rank, equipped with
a continuous action of $\Gamma$. Then for any abelian variety $B$,
the tensor product $B_{k_s} \otimes_{\bZ} M$ is a $k_s$-abelian variety 
equipped with the diagonal $\Gamma$-action, and descends to a unique 
$k$-abelian variety $B(M)$. Replacing $B$ with $B(M)$, it follows that the
corresponding map 
\[ \partial(M) : 
\bQ \otimes_{\bZ} \Hom^{\Gamma}_{\cC_{k_s}}
(B_{k_s} \otimes_{\bZ} M, A_{k_s}) 
\longrightarrow 
\bQ \otimes_{\bZ} \Hom^{\Gamma}(X(T) \otimes_{\bZ} M,\wB(k_s)) \]
is surjective as well. As 
$\bQ \otimes_{\bZ} \Hom_{\cC_{k_s}}(B_{k_s},A_{k_s})$ and
$\bQ \otimes_{\bZ} \Hom(X(T),\wB(k_s))$ are direct sums of continuous,
finite-dimensional $\Gamma$-modules, and $M$ is arbitrary, it follows that 
the natural map
\[ \Hom_{\cC_{k_s}}(B_{k_s},A_{k_s}) \longrightarrow 
\Hom(X(T),\wB(k_s)) \]
is surjective over the rationals.

In the case where $k$ is not locally finite, recall from 
\cite[Thm.~9.1]{FJ} that the group $\wB(k_s)$ 
has infinite rank for any non-zero abelian variety $B$. Since 
$\Hom_{\cC_{k_s}}(B_{k_s},A_{k_s})$ has finite rank, it follows that
$X(T) = 0$, i.e., $S$ is an abelian variety. We now distinguish between
two subcases.

If $\car(k) = 0$, then $G$ is a vector extension of $A$, and hence is
injective in the category $\ucV$. Thus, $G = A$ in view of Corollary
\ref{cor:projinj}.

If $\car(k) > 0$, then $G \cong A \times U$ in $\ucC$ for some split 
unipotent group $U$, which must be injective in $\ucC$. If in addition
$U \neq 0$, then it follows that $\bG_a$ (a quotient of $U$) is injective
in $\ucC$ as well. But $\Ext^1_{\ucC}(\bG_a, \bG_a) \neq 0$ in view
of Example \ref{ex:nonsplit}, a contradiction. Thus, $G = A$ again.

Finally, in the case where $k$ is locally finite, we have 
$G \cong T \times A \times U$ in $\ucC$ with an obvious notation.
As above, we obtain that $U = 0$; on the other hand, $T$ and
$A$ are injective in $\ucC$ by Corollary \ref{cor:prodlocf}. This completes
the proof.
\end{proof}

\begin{remark}\label{rem:inj}
The category $\cC$ has no non-zero injective objects. This result
should be well-known, but we could not find it in the literature;
also, it does not seem to follow from Theorem \ref{thm:inj}, as the
relation between injective objects in $\cC$ and $\ucC$ is unclear.
So we sketch a direct proof: let $G$ be an injective object of $\cC$.
For any positive integer $n$ which is prime to $\car(k)$, the 
$n$-torsion subgroup $G[n]$ is finite and \'etale, hence isomorphic
to a subgroup of some torus $T_n$. The inclusion $G[n] \subset G$
extends to a morphism $T_n \to G$, and hence $G[n]$ is contained
in the maximal torus $T(G)$. Likewise, $G[n]$ is isomorphic to
a subgroup of an abelian variety $A_n$, and hence is contained
in the largest abelian subvariety of $G$. Since this abelian
variety intersects $T(G)$ along a finite subgroup, it follows 
that $G[n] = 0$ for $n \gg 0$. 

If $\car(k) = 0$, then there is an exact sequence
$0 \to R_u(G) \to G \to H \to 0$, where $H^0$ is a semi-abelian
variety. Since $n_{R_u(G)}$ is an isomorphism, we have 
$G[n] \cong H[n]$ for all $n$; it follows that $H$ is finite.
Thus, $G = R_u(G) + F$ for some finite group $F$, which embeds into
some torus $T$. As above, it follows that $F = 0$, i.e., $G$ is
unipotent. If $G \neq 0$, then $\Ext^1_{\cC}(A,G) \neq 0$ for any
abelian variety $A$, a contradiction.

If $\car(k) = p > 0$, then there is an exact sequence
$0 \to H \to G \to U \to 0$, where $H^0$ is again a semi-abelian
variety, $H/H^0$ is of multiplicative type, and $U$ is unipotent. 
Thus, $H[n] \cong G[n]$ for all $n$ prime to $p$, and hence $H$ 
is finite. It follows as above that $H = 0$, i.e., $G$ is unipotent.
If $G \neq 0$, then $G$ contains a copy of $\alpha_p$, which embeds 
into some abelian variety; this yields a contradiction.
\end{remark}

\medskip

\noindent
{\bf Acknowledgements}.
I warmly thank Claire Amiot, Brian Conrad, H\'el\`ene Esnault, 
St\'ephane Guillermou, Bruno Kahn, Giancarlo Lucchini Arteche, 
Ga\"el R\'emond, Jean-Pierre Serre, and Catharina Stroppel for 
very helpful discussions or e-mail exchanges. Special thanks 
are due to an anonymous referee for a careful reading and 
valuable comments.

\bibliographystyle{amsalpha}

\end{document}